\def\N{\ensuremath{\mathbf{N}}}
\def\Z{\ensuremath{\mathbf{Z}}}
\def\Q{\ensuremath{\mathbf{Q}}}
\def\R{\ensuremath{\mathbf{R}}}
\def\C{\ensuremath{\mathbf{C}}}
\def\D{\ensuremath{\mathbf{D}}}
\def\P{\ensuremath{\mathbf{P}}}
\def\F{\ensuremath{\mathbf{F}}}
\def\E#1#2{\ensuremath{\mathbf{A}^{#1,\mathrm{an}}_{#2}}}
\def\AZ{\ensuremath{\mathbf{A}^{1,\mathrm{an}}_{\mathbf{Z}}}}
\def\Frac{\textrm{Frac}}
\def\Spec{\textrm{Spec}}
\def\an{\textrm{an}}
\def\alg{\textrm{alg}}
\def\eme{\ensuremath{^\textrm{\`eme}}}
\def\p{\ensuremath{\mathfrak{p}}}
\def\m{\ensuremath{\mathfrak{m}}}
\def\As{\ensuremath{\mathscr{A}}}
\def\Bs{\ensuremath{\mathscr{B}}}
\def\Fs{\ensuremath{\mathscr{F}}}
\def\Gs{\ensuremath{\mathscr{G}}}
\def\Hs{\ensuremath{\mathscr{H}}}
\def\Is{\ensuremath{\mathscr{I}}}
\def\Ks{\ensuremath{\mathscr{K}}}
\def\Ls{\ensuremath{\mathscr{L}}}
\def\Ms{\ensuremath{\mathscr{M}}}
\def\Ns{\ensuremath{\mathscr{N}}}
\def\Os{\ensuremath{\mathscr{O}}}
\def\Ps{\ensuremath{\mathscr{P}}}
\def\Rs{\ensuremath{\mathscr{R}}}
\def\Hom{\ensuremath{\mathscr Hom}}
\def\Mc{\ensuremath{\mathcal{M}}}
\def\cn#1#2{\ensuremath{[\![ {#1},{#2}]\!]}}
\def\of#1#2#3{\ensuremath{\mathopen{#1}{#2}\mathclose{#3}}}
\def\la{\ensuremath{\langle}}
\def\ra{\ensuremath{\rangle}}
\def\eps{\ensuremath{\varepsilon}}
\def\b0{\ensuremath{{\boldsymbol{0}}}}
\def\l{\ensuremath{\|}}
\def\Aun{\ensuremath{A_{1^-}[\![T]\!]}}
\begin{document}
\setlength{\baselineskip}{0.55cm}	

\title{Raccord sur les espaces de Berkovich}
\author{J\'er\^ome Poineau}
\address{Institut de recherche math\'ematique avanc\'ee, 7, rue Ren\'e Descartes, 67084 Strasbourg, France}
\email{jerome.poineau@math.u-strasbg.fr}
\thanks{L'auteur est membre du projet jeunes chercheurs \og Espaces de Berkovich \fg\ de l'agence nationale de la recherche.}

\date{\today}

\subjclass{12F12, 14G22, 14G20, 14G25}
\keywords{Probl\`eme inverse de Galois, espaces de Berkovich, g\'eom\'etrie analytique $p$-adique, g\'eom\'etrie analytique globale, s\'eries arithm\'etiques convergentes}

\begin{abstract}
\selectlanguage{french}
Nous pr\'esentons ici quelques r\'esultats autour du probl\`eme inverse de Galois. Nous commen\c{c}ons par rappeler la strat\'egie g\'eom\'etrique classique permettant de d\'emontrer que tout groupe fini est groupe de Galois sur~$\C(T)$. Nous l'appliquons dans une autre situation afin de d\'emontrer que, si~$\Ms(B)$ d\'esigne le corps des fonctions m\'eromorphes sur une partie~$B$, d'un certain type, d'un espace de Berkovich sur un corps, alors l'\'enonc\'e pr\'ec\'edent reste valable lorsque l'on remplace~$\C$ par~$\Ms(B)$. On retrouve, en particulier, le fait que tout groupe fini est groupe de Galois sur~$K(T)$, lorsque~$K$ est un corps valu\'e complet dont la valuation n'est pas triviale.


Dans un second temps, en utilisant une m\'ethode similaire, nous proposons une nouvelle preuve, purement g\'eom\'etrique, dans le langage des espaces de Berkovich sur un anneau d'entiers de corps de nombres, d'un r\'esultat de D.~Harbater assurant que tout groupe fini est groupe de Galois sur un corps de s\'eries arithm\'etiques convergentes, ainsi que quelques g\'en\'eralisations.

\vskip 0.5\baselineskip

\selectlanguage{english}
\noindent{\bf Abstract}
\vskip 0.5\baselineskip
\noindent
{\bf Patching over Berkovich Spaces.} In this text, we present a few results related to the inverse Galois problem. First we recall the geometric patching strategy that is used to handle the problem in the complex case. We use it in a different situation in order to prove that if~$\Ms(B)$ is the field of meromorphic functions over a part~$B$, satisfying some conditions, of a Berkovich space over a valued field, then every finite group is a Galois group over~$\Ms(B)(T)$. From this we derive a new proof of the fact that any finite group is a Galois group over~$K(T)$, where~$K$ is a complete valued field with non-trivial valuation.

In a second part, we deal with the following theorem by D. Harbater: every finite group is a Galois group over a field of convergent arithmetic power series. We switch to Berkovich spaces over the ring of integers of a number field and use a similar strategy to give a new and purely geometric proof of this theorem, as well as some generalizations. 
\end{abstract}

\maketitle

\pagebreak

\tableofcontents

\section*{Introduction}

Le probl\`eme inverse de Galois consiste \`a montrer que tout groupe fini peut \^etre r\'ealis\'e comme groupe de Galois sur le corps des nombres rationnels~$\Q$. La simplicit\'e de l'\'enonc\'e n'augure en rien de la difficult\'e de la question et sa r\'eponse nous \'echappe encore \`a ce jour.

Une strat\'egie due \`a D.~Hilbert consiste \`a chercher \`a r\'ealiser, tout d'abord, un groupe fini~$G$ donn\'e comme groupe de Galois sur le corps~$\Q(T)$. Ce second probl\`eme se pr\^ete \`a une approche g\'eom\'etrique. En effet, supposons que nous sachions construire un rev\^etement galoisien (ramifi\'e)~$X$ de la droite projective~$\P^1_{\Q}$ de groupe de Galois~$G$. L'extension 
$$\Ms(\P^1_{\Q}) = \Q(T) \to \Ms(X)$$
induite entre les corps de fonctions fournirait alors une solution. Le th\'eor\`eme d'ir\-r\'e\-duc\-ti\-bi\-li\-t\'e de Hilbert permet ensuite de revenir au probl\`eme initial : il assure qu'il est toujours possible de sp\'ecialiser une telle extension de fa\c{c}on \`a obtenir une extension du corps~$\Q$ dont le groupe de Galois est encore~$G$.



\bigskip

Dans ce texte, nous nous int\'eressons \`a des variantes du second probl\`eme. Nous commen\c{c}ons par rappeler, dans la section~\ref{sectioncomplexe}, une strat\'egie classique pour obtenir des rev\^etements galoisiens : construire localement des rev\^etements analytiques cycliques, puis raccorder ces rev\^etements, et enfin montrer que le rev\^etement obtenu est alg\'ebrique.


Dans la deuxi\`eme section, nous nous pla\c{c}ons dans le cadre des espaces analytiques au sens de Berkovich et appliquons la strat\'egie indiqu\'ee. Nous parvenons \`a d\'emontrer que, lorsque~$K$ est un corps ultram\'etrique complet dont la valuation n'est pas triviale, tout groupe fini est groupe de Galois sur le corps~$K(T)$. En particulier, pour tout nombre premier~$p$, tout groupe fini est groupe de Galois sur le corps~$\Q_{p}(T)$, un corps qui contient~$\Q(T)$. La d\'emonstration originale de ce r\'esultat est due \`a D.~Harbater (\emph{cf.}~\cite{arithline}, corollary~2.4) ; elle est \'ecrite dans le cadre de la g\'eom\'etrie formelle. Il existe \'egalement une preuve dans le cadre de la g\'eom\'etrie rigide, r\'edig\'ee par Q.~Liu dans~\cite{LiuGalois} en suivant une id\'ee de J.-P.~Serre. Signalons que l'absence, en g\'en\'eral, de racines primitives de l'unit\'e de tout ordre complique la premi\`ere \'etape. Aussi les deux d\'emonstrations cit\'ees font-elles appel aux constructions d\'ecrites par D.~Saltman dans l'article~\cite{Saltman}. Dans la preuve que nous proposons ici, en revanche, nous en faisons l'\'economie : un choix judicieux des lieux o\`u nous construisons les rev\^etements cycliques nous permet d'avoir recours uniquement \`a des extensions de Kummer lorsque le corps~$K$ est de caract\'eristique nulle, ou de Kummer et d'Artin-Schreier-Witt lorsqu'il est de caract\'eristique strictement positive. En toute logique, la simplification de l'arithm\'etique du probl\`eme a un co\^ut et nous utilisons un r\'esultat de g\'eom\'etrie plus compliqu\'e, mais fort naturel : un th\'eor\`eme de type GAGA relatif au-dessus d'un espace affino\"{\i}de (\emph{cf.} annexe~\ref{annexeGAGA}).

La derni\`ere section du texte est consacr\'ee \`a la construction d'extensions galoisiennes d'un sur-corps de~$\Q(T)$ d'un type diff\'erent : le corps des fractions de l'anneau~$\Z_{1^-}[\![T]\!]$ form\'e des s\'eries en une variable \`a coefficients entiers qui convergent sur le disque unit\'e ouvert complexe. Nous en d\'eduisons une nouvelle preuve du r\'esultat de D.~Harbater qui assure que tout groupe fini est groupe de Galois sur ce corps (\emph{cf.}~\cite{galoiscovers}, corollary~3.8), et l'\'etendons \`a tout corps de nombres. La d\'emonstration originale de ce r\'esultat, aboutissement de la s\'erie d'articles \cite{Harbater}, \cite{mockcovers}, \cite{algrings} et \cite{galoiscovers}, est ardue et technique ; elle bas\'ee sur des manipulations al\-g\'e\-bri\-ques des anneaux de s\'eries du m\^eme type que~$\Z_{1^-}[\![T]\!]$. Celle que nous proposons est, en revanche, purement g\'eom\'etrique. La seule difficult\'e r\'eside dans le fait que le cadre adapt\'e \`a ce probl\`eme est celui, fort naturel mais sans doute encore un peu exotique, de la droite de Berkovich sur un anneau d'entiers de corps de nombres (la construction et les propri\'et\'es de cet espace font l'objet de l'annexe~\ref{sectionberko}). Signalons, pour finir, que, contrairement \`a l'habitude, nous construisons un rev\^etement d'un ouvert d'un espace affine ; les r\'esultats de type GAGA y tombent donc en d\'efaut et nous utiliserons, pour pallier ce manque, le fait que l'ouvert en question soit un espace de Stein.


\bigskip

{\bf Notations} 


Nous d\'esignerons par~$\N$ l'ensemble des nombres entiers positifs et par~$\N^*$ le sous-ensemble form\'e de ceux qui ne sont pas nuls.

\bigskip

{\bf Remerciements} 

La derni\`ere partie de cet article a \'et\'e r\'edig\'ee au cours de l'ann\'ee que j'ai pass\'ee \`a l'universit\'e de Ratisbonne. Je souhaite remercier Klaus K\"unnemann, qui m'a permis d'y s\'ejourner, pour son accueil et  ses encouragements. Ma gratitude va \'egalement \`a Antoine Chambert-Loir dont les conseils concernant la structure de cet texte m'ont permis, je l'esp\`ere, d'en accro\^{\i}tre l'int\'er\^et et la clart\'e. Merci \'egalement \`a Antoine Ducros de m'avoir communiqu\'e ses notes sur les th\'eor\`emes GAGA.

\section{Strat\'egie de raccord}\label{sectioncomplexe}

Nous rappelons ici une d\'emonstration classique du fait que tout groupe fini est groupe de Galois d'un rev\^etement de la vari\'et\'e alg\'ebrique~$\P^1_{\C}$. Ce n'est qu'un pr\'etexte pour pr\'esenter la strat\'egie de raccord que nous utiliserons constamment par la suite.


Consid\'erons tout d'abord le cas des groupes cycliques. Pour disposer de plus de souplesse, nous allons commencer par construire des rev\^etements de la vari\'et\'e analytique~$\P^1(\C)$, et m\^eme des rev\^etements de petits ouverts de cette vari\'et\'e. Soit $m\in\N^*$. Choisissons un point~$P$ de~$\P^1(\C)$, une coordonn\'ee locale~$z$ au voisinage de ce point, un disque ouvert~$D_{P}$ sur lequel elle est d\'efinie et un disque ferm\'e~$E_{P}$ de rayon strictement positif contenu dans~$D_{P}$. Soient~$a$ et~$b$ deux points distincts de~$E_{P}$. Consid\'erons le rev\^etement connexe et lisse~$X_{P}$ du disque~$D_{P}$ donn\'e par l'\'equation
$$u^m = (z-a)(z-b)^{m-1}.$$
C'est un rev\^etement galoisien de groupe $\Z/m\Z$. En outre, il est trivial au-dessus du compl\'ementaire du disque~$E_{P}$. Remarquons que pour d\'eterminer le groupe de Galois nous avons utilis\'e le fait que le corps~$\C$ contienne une racine primitive $m\eme$ de l'unit\'e. 

Nous allons maintenant recoller des rev\^etements du type pr\'ec\'edent afin d'en construire qui poss\`edent des groupes de Galois finis arbitraires. Fixons un groupe fini~$G$. Notons~$n$ son ordre et choisissons-en des g\'en\'erateurs $g_{1},\ldots,g_{t}$, avec $t\in\N^*$. Soit $i\in\cn{1}{t}$. Notons~$n_{i}$ l'ordre de l'\'el\'ement~$g_{i}$ dans le groupe~$G$ et posons $d_{i}=n/n_{i}$. Choisissons un point~$P_{i}$ de~$\P^1(\C)$ et construisons, par la m\'ethode du paragraphe pr\'ec\'edent, un $\Z/n_{i}\Z$-rev\^etement~$X_{P_{i}}$ au-dessus d'un disque ouvert~$D_{P_{i}}$ et trivial hors d'un disque ferm\'e $E_{P_{i}} \subset D_{P_{i}}$. Indexons les feuillets de ce rev\^etement par les entiers compris entre~$0$ et~$n_{i}-1$ de fa\c{c}on compatible avec l'action du groupe~$\Z/n_{i}\Z\simeq \la g_{i} \ra$. Consid\'erons maintenant~$\textrm{Ind}_{\la g_{i} \ra}^G(X_{P_{i}})$, le $G$-rev\^etement induit par le $\la g_{i} \ra$-rev\^etement~$X_{P_{i}}$. Rappelons qu'il est constitu\'e topologiquement de~$d_{i}$ copies de~$X_{P_{i}}$. Nous pouvons envoyer, de fa\c{c}on bijective, les feuillets de ce rev\^etement sur les \'el\'ements du groupe. Pour ce faire, choisissons, dans~$G$, des repr\'esentants $a_{i,0},\ldots,a_{i,d_{i}-1}$ des \'el\'ements du quotient $G/\la g_{i} \ra$. L'application qui envoie le feuillet index\'e par~$k$ de la copie index\'ee par~$l$ de~$X_{P_{i}}$ sur l'\'el\'ement~$a_{i,l}g_{i}^k$ de~$G$ est bijective. Nous pouvons alors d\'ecrire l'action du groupe~$G$ sur le rev\^etement~$\textrm{Ind}_{\la g_{i} \ra}^G(X_{P_{i}})$ de la fa\c{c}on suivante : l'\'el\'ement~$g$ de~$G$ envoie le feuillet associ\'e \`a l'\'el\'ement~$h$ de~$G$ sur le feuillet associ\'e \`a l'\'el\'ement~$hg$.

Notons~$D'$ le compl\'ementaire de la r\'eunion des disques $E_{P_{1}},\ldots,E_{P_{t}}$ dans~$\P^1(\C)$. Con\-si\-d\'e\-rons le $G$-rev\^etement $\textrm{Ind}_{\la e \ra}^G(D')$ induit par le rev\^etement trivial de~$D'$ et indexons ses feuillets par les \'el\'ements de~$G$ de fa\c{c}on compatible avec l'action de ce groupe.

Raccordons, maintenant, les rev\^etements que nous venons de construire. Nous supposerons que les disques~$D_{P_{i}}$, avec $i\in\cn{1}{t}$, sont deux \`a deux disjoints. Nous pouvons facilement nous ramener \`a ce cas en les r\'eduisant, si besoin est. Pour tout \'el\'ement~$i$ de~$\cn{1}{t}$, nous recollons alors,  au-dessus de l'intersection~$D_{P_{i}}\cap D'$, les feuillets associ\'es aux m\^emes \'el\'ements du groupe~$G$ (\emph{cf.} figure \addtocounter{figure}{1}\thefigure\addtocounter{figure}{-1}). Nous obtenons ainsi un rev\^etement~$Y$ de~$\P^1(\C)$ dont on v\'erifie facilement qu'il est connexe, lisse et galoisien de groupe~$G$. 

\begin{figure}[htb]\label{figurerecollement}
\begin{center}
\newrgbcolor{wwwwww}{0.4 0.4 0.4}
\psset{xunit=0.6cm,yunit=0.6cm,algebraic=true,dotstyle=o,dotsize=3pt 0,linewidth=0.8pt,arrowsize=3pt 2,arrowinset=0.25}
\begin{pspicture*}(-4,-9)(19,7)
\psline(-3,-7)(-1,-7)
\psline[linestyle=dashed,dash=2pt 2pt](-1,-7)(0,-7)
\psline(0,-7)(2,-7)
\psline(13,-7)(15,-7)
\psline[linestyle=dashed,dash=2pt 2pt](15,-7)(16,-7)
\psline(16,-7)(18,-7)
\psline(-3,-8)(-1,-8)
\psline(0,-8)(15,-8)
\psline(16,-8)(18,-8)
\psline[linestyle=dashed,dash=2pt 2pt,linecolor=wwwwww](15,6)(15,3)
\psline[linestyle=dashed,dash=2pt 2pt,linecolor=wwwwww](15,3)(16,3)
\psline[linestyle=dashed,dash=2pt 2pt,linecolor=wwwwww](16,3)(16,6)
\psline[linestyle=dashed,dash=2pt 2pt,linecolor=wwwwww](16,6)(15,6)
\psline[linestyle=dashed,dash=2pt 2pt,linecolor=wwwwww](15,1)(15,-2)
\psline[linestyle=dashed,dash=2pt 2pt,linecolor=wwwwww](15,-2)(16,-2)
\psline[linestyle=dashed,dash=2pt 2pt,linecolor=wwwwww](16,-2)(16,1)
\psline[linestyle=dashed,dash=2pt 2pt,linecolor=wwwwww](16,1)(15,1)
\psline(15,5)(13,5)
\psline(15,4.5)(13,4.5)
\psline(15,5.5)(13,5.5)
\psline(15,0)(13,0)
\psline(15,0.5)(13,0.5)
\psline[linestyle=dashed,dash=2pt 2pt,linecolor=wwwwww](-1,4)(-1,0)
\psline[linestyle=dashed,dash=2pt 2pt,linecolor=wwwwww](-1,0)(0,0)
\psline[linestyle=dashed,dash=2pt 2pt,linecolor=wwwwww](0,0)(0,4)
\psline[linestyle=dashed,dash=2pt 2pt,linecolor=wwwwww](0,4)(-1,4)
\psline(0,2.5)(2,2.5)
\psline(0,3.5)(2,3.5)
\psline(-1,3.5)(-3,3.5)
\psline(-1,2.5)(-3,2.5)
\psline(16,5.5)(18,5.5)
\psline(16,5)(18,5)
\psline(16,4.5)(18,4.5)
\psline(16,0.5)(18,0.5)
\psline(16,0)(18,0)
\parametricplot{1.2732710714015814}{2.2280285817731684}{1*12.17*cos(t)+0*12.17*sin(t)+9.43|0*12.17*cos(t)+1*12.17*sin(t)+-7.13}
\psline(0,1.5)(2,1.5)
\psline(0,0.5)(2,0.5)
\psline(15,-1.5)(13,-1.5)
\parametricplot{3.9985669820165612}{4.893706880450967}{1*13.17*cos(t)+0*13.17*sin(t)+10.62|0*13.17*cos(t)+1*13.17*sin(t)+11.46}
\psline(-1,1.5)(-3,1.5)
\psline(-1,0.5)(-3,0.5)
\psline(16,-1.5)(18,-1.5)
\rput[tl](1,3.6){$\vdots$}
\rput[tl](1,2.6){$\vdots$}
\rput[tl](1,1.6){$\vdots$}
\rput[tl](-2,3.6){$\vdots$}
\rput[tl](-2,2.6){$\vdots$}
\rput[tl](-2,1.6){$\vdots$}
\rput[tl](14,4.3){$\vdots$}
\rput[tl](17,4.3){$\vdots$}
\rput[tl](14,-0.1){$\vdots$}
\rput[tl](17,-0.1){$\vdots$}
\rput[tl](15.5,2.87){$\vdots$}
\rput[tl](-0.5,5.85){$\vdots$}
\rput[tl](-0.5,-0.69){$\vdots$}
\rput[tl](15.5,2.19){$\vdots$}
\rput[tl](-0.5,5.17){$\vdots$}
\rput[tl](-0.5,0){$\vdots$}
\rput[tl](7.7,-2.82){$Y$}
\rput[tl](7.3,-5.5){$\mathbf{P}^1(\mathbf{C})$}
\psline{->}(7.96,-3.6)(7.98,-5.2)
\rput[tl](14.8,-0.5){$d_j-1$}
\rput[tl](15.41,5.2){$1$}
\rput[tl](7.79,-7.17){$D'$}
\rput[tl](-0.9,-6.3){$E_{P_{i}}$}
\rput[tl](-2.93,-6.3){$D_{P_{i}}$}
\rput[tl](15.1,-6.3){$E_{P_{j}}$}
\rput[tl](16.9,-6.3){$D_{P_{j}}$}
\rput[tl](-0.63,3){$l$}
\rput[tl](2.39,3.81){$0$}
\rput[tl](2.5,2.7){$k$}
\rput[tl](2.5,1.76){$k'$}
\rput[tl](2.5,0.74){$n_i-1$}
\rput[tl](12.2,5.75){$0$}
\rput[tl](12.2,5.27){$1$}
\rput[tl](12.2,4.76){$2$}
\rput[tl](12.2,0.77){$0$}
\rput[tl](12.2,0.26){$1$}
\rput[tl](12.2,-1.78){$n_j-1$}
\rput[tl](6.77,4.59){$a_{i,l}g_i^k=a_{j,1}g_j^2$}
\rput[tl](5.5,0.1){$a_{i,l}g_i^{k'}=a_{j,d_j-1}g_j^{-1}$}
\end{pspicture*}
\caption{Raccord de rev\^etements cycliques.}
\end{center}
\end{figure}
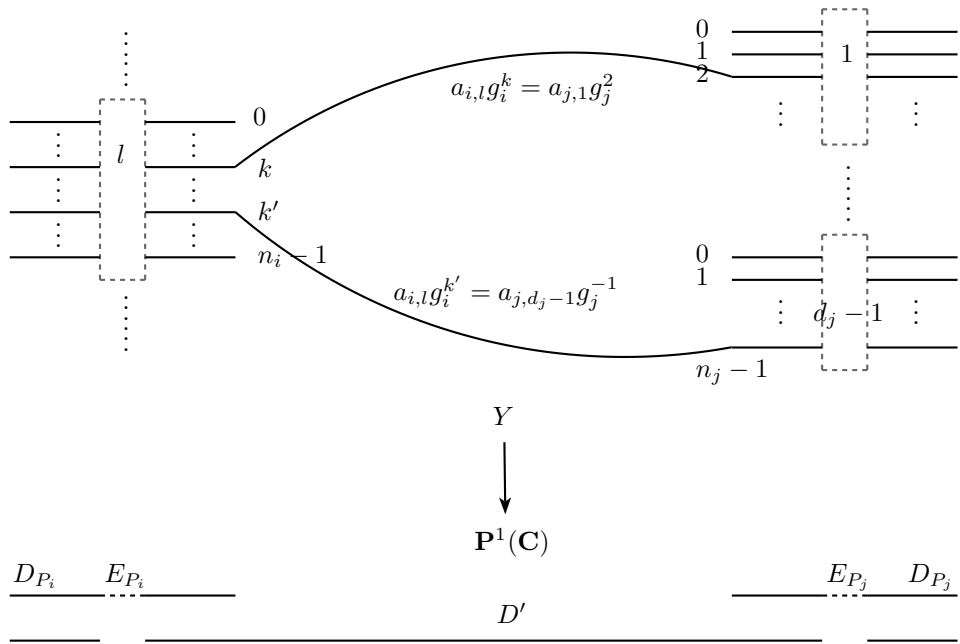

Ainsi avons-nous obtenu une vari\'et\'e analytique complexe~$Y$ v\'erifiant les propri\'et\'es requises. Il nous reste \`a montrer que c'est, en r\'ealit\'e, une vari\'et\'e alg\'ebrique. Ce r\'esultat d\'ecoule du th\'eor\`eme d'existence de Riemann ou, si l'on veut, des th\'eor\`emes GAGA de J.-P.~Serre. Nous avons  finalement obtenu le r\'esultat suivant :

\begin{thm}\label{surC}
Tout groupe fini est groupe de Galois d'une extension du corps~$\C(T)$.
\end{thm}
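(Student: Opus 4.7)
The plan is to follow the patching strategy just sketched: given a finite group~$G$, I fix a system of generators $g_{1},\ldots,g_{t}$ of respective orders $n_{1},\ldots,n_{t}$, I build for each~$i$ a local cyclic $\la g_{i}\ra$-cover of a small open disk in~$\P^1(\C)$ that becomes trivial outside an even smaller closed subdisk, I induce each of these to a $G$-cover, and I glue all of them to a trivial $G$-cover over the complement of the ramification loci so as to obtain a connected analytic Galois cover of~$\P^1(\C)$ with group~$G$. The extension of~$\C(T)$ will then be extracted from this cover by appealing to GAGA.

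For each $i \in \cn{1}{t}$, I choose a point~$P_{i}$ of $\P^1(\C)$, a local coordinate~$z$, an open disk~$D_{P_{i}}$ on which~$z$ is defined, a closed disk $E_{P_{i}} \subset D_{P_{i}}$ of positive radius, and two distinct points $a_{i}, b_{i}$ of~$E_{P_{i}}$; the equation $u^{n_{i}} = (z-a_{i})(z-b_{i})^{n_{i}-1}$ then defines a smooth connected cyclic cover~$X_{P_{i}}$ of~$D_{P_{i}}$ of group~$\Z/n_{i}\Z \simeq \la g_{i} \ra$, trivial over $D_{P_{i}} \setminus E_{P_{i}}$. I form the induced cover $\textrm{Ind}_{\la g_{i} \ra}^G(X_{P_{i}})$, which topologically is the disjoint union of~$d_{i} = |G|/n_{i}$ copies of~$X_{P_{i}}$; choosing coset representatives $a_{i,0},\ldots,a_{i,d_{i}-1}$ for $G/\la g_{i}\ra$, I label its sheets bijectively by the elements of~$G$ so that the $G$-action becomes right multiplication on labels. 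Shrinking the~$D_{P_{i}}$ if necessary, I arrange that they are pairwise disjoint, I set $D' = \P^1(\C) \setminus \bigcup_{i} E_{P_{i}}$, and over~$D'$ I take the trivial $G$-cover $\textrm{Ind}_{\la e\ra}^G(D')$ with sheets $G$-equivariantly indexed by~$G$.

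The core of the construction is the gluing step: over every intersection $D_{P_{i}} \cap D'$ both covers are trivially trivialised, so I may identify the sheets bearing the same label in~$G$. I expect the main delicate point to be the verification that the resulting complex analytic space~$Y$ is \emph{connected}; this follows because the local monodromy of~$Y$ around~$P_{i}$ realises the generator~$g_{i}$ of~$G$, so the $g_{i}$ together generate the monodromy group, hence act transitively on the sheets above a base point. By construction~$Y$ is smooth and carries a fibrewise free transitive action of~$G$ above $\P^1(\C)$, making $Y \to \P^1(\C)$ an analytic Galois cover with group~$G$.

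Finally, I invoke the Riemann existence theorem, or equivalently the GAGA theorems of J.-P.~Serre alluded to above, to conclude that $Y$ is the analytification of a smooth projective complex algebraic curve and that $Y \to \P^1_{\C}$ is algebraic. The induced extension of function fields $\Ms(\P^1_{\C}) = \C(T) \to \Ms(Y)$ is then Galois with group~$G$, which is the statement of the theorem.
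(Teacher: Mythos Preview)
Your proof is correct and follows essentially the same approach as the paper: the local cyclic covers given by $u^{n_i}=(z-a_i)(z-b_i)^{n_i-1}$, the induction to $G$-covers, the gluing over $D'=\P^1(\C)\setminus\bigcup_i E_{P_i}$, and the appeal to Riemann existence/GAGA are exactly what the paper does. Your explicit monodromy justification for connectedness is even a bit more detailed than the paper's ``on v\'erifie facilement''.
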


\bigskip

Pour r\'esumer, rappelons en quelques mots la strat\'egie de la preuve :
\begin{enumerate}
\item Construire des rev\^etements cycliques sur de petits ouverts, triviaux au voisinage du bord.
\item Raccorder ces rev\^etements.
\item Montrer que le rev\^etement obtenu est alg\'ebrique.
\end{enumerate}

D.~Harbater l'a d\'evelopp\'ee dans plusieurs contextes et utilis\'ee pour d\'emontrer de nombreux r\'esultats. Nous renvoyons le lecteur d\'esireux d'en savoir plus au texte~\cite{patching}.

La simplicit\'e de cette strat\'egie de raccord (\og patching \fg\ chez D.~Harbater) invite \`a l'appliquer dans de nombreux contextes g\'eom\'etriques, pour peu que l'on dispose d'une bonne notion de \og petits ouverts \fg\ et de th\'eor\`emes d'alg\'ebricit\'e. Ce n'est pas le cas de la g\'eom\'etrie alg\'ebrique, o\`u deux ouverts non vides de la droite projective se coupent toujours, mais ce devrait l'\^etre pour toute g\'eom\'etrie analytique. Dans la suite de ce texte, nous illustrerons cette id\'ee en appliquant la strat\'egie indiqu\'ee dans le cadre des espaces de Berkovich sur un corps ultram\'etrique complet, \`a la section~\ref{sectiondemoberko}, puis sur un anneau d'entiers de corps de nombres, \`a la section~\ref{sectiondemo}.

\section{Probl\`eme inverse de Galois sur une droite relative} \label{sectiondemoberko}

Soient~$k$ un corps muni d'une valeur absolue ultram\'etrique pour laquelle il est complet et~$X$ un espace $k$-analytique g\'eom\'etriquement irr\'eductible. Nous noterons~$\Os$ le faisceau structural sur cet espace. Soit~$B$ une partie de l'espace~$X$. 

Notons~$Y$ le produit fibr\'e $X\times_{k}\P^{1,\an}_{k}$, $\pi : Y \to X$ et $\lambda : Y \to \P^{1,\an}_{k}$ les morphismes naturels de projection. Signalons que, d'apr\`es~\cite{excellence}, th\'eor\`emes~7.16 et~8.4, l'espace~$Y$ est g\'eom\'etriquement irr\'eductible. Notons~$Y(B)$ l'image r\'eciproque de~$B$ par le morphisme~$\pi$. 


Dans les num\'eros~\ref{constructionlocaleberko}, \ref{numerorecollementberko} et~\ref{conclusion}, nous supposerons que la partie~$B$ est compacte (pour les applications, dans la preuve des corollaires~\ref{corcomplet} et~\ref{corfertile}, elle sera m\^eme r\'eduite \`a un point). Nous la munirons alors du faisceau des fonctions surconvergentes, c'est-\`a-dire du faisceau~$j^{-1}\Os$, o\`u $j : B \hookrightarrow X$ d\'esigne l'inclusion. De fa\c{c}on g\'en\'erale, nous utiliserons, sans plus le pr\'eciser, le faisceau des fonctions surconvergentes pour toute partie compacte. En particulier, l'espace localement annel\'e associ\'e \`a l'espace~$Y(B)$ que nous obtiendrons ainsi ne sera autre que le germe $(Y,\pi^{-1}(B))$ au sens de V.~Berkovich (\emph{cf.}~\cite{bleu}, \S 3.4 ou~\cite{vanishingcyclesanalytic}, \S 2).

\subsection{Construction locale de rev\^etements cycliques}\label{constructionlocaleberko}

Dans le cas complexe, la construction locale \'etait particuli\`erement simple, car nous disposions de racines primitives de l'unit\'e de tout ordre. Lorsque nous cherchons \`a construire un rev\^etement cyclique dont l'ordre~$n$ est premier \`a l'exposant caract\'eristique du corps de base, la situation n'est gu\`ere plus compliqu\'ee, car nous disposons encore de racines primitives $n^\textrm{\`emes}$ de l'unit\'e sur certains ouverts. Nous utiliserons alors une extension de Kummer bien choisie. Dans les autres cas, nous ferons appel \`a la th\'eorie d'Artin-Schreier-Witt. Nous noterons~$p$ l'exposant caract\'eristique du corps~$k$. 

Fixons une extension finie et s\'eparable~$K$ de~$k$. Soit $P\in k[T]$ le polyn\^ome minimal unitaire d'un \'el\'ement primitif de cette extension. Notons~$t$ le point de~$\P^{1,\an}_{k}$ d\'efini par l'annulation de ce polyn\^ome.

\subsubsection{Extensions de Kummer}\label{Kummer}

Soit~$n$ un entier sup\'erieur ou \'egal \`a~$2$ et premier \`a~$p$. Supposons que le corps~$K$ contient une racine primitive~$n\eme$ de l'unit\'e. 

Puisque l'anneau local au point~$t$ est hens\'elien, il contient une racine primitive $n\eme$ de l'unit\'e, que nous noterons~$\zeta$. Cette racine est d\'efinie sur un voisinage du point~$t$, que nous pouvons supposer de la forme
$$V_{u} = \left\{\left.y\in\P^{1,\an}_{k}\,\right|\, |P(y)|\le u\right\},$$
avec $u>0$.

Posons $V = \lambda^{-1}(V_{u})$. Nous noterons encore~$P$ et~$\zeta$ les r\'etrotirettes des \'el\'ements~$P$ et~$\zeta$ par le morphisme~$\lambda$. Remarquons que l'\'el\'ement~$\zeta$ de~$\Os(V)$ est encore une racine primitive~$n\eme$ de l'unit\'e.


Soient~$U$ un voisinage de~$B$ dans~$X$ et~$\alpha$ un \'el\'ement de~$\Os(U)$ nul en tout point de~$B$. Posons
$$Q(S) = S^n-P^n-\alpha \in \Os(\pi^{-1}(U))[S].$$
D\'efinissons un pr\'e\-fais\-ceau~$\Fs$ sur~$\pi^{-1}(U)$ en posant, pour toute partie ouverte~$W$ de~$\pi^{-1}(U)$,
$$\Fs(W) = \Os(W)[S]/(Q(S))$$
et en utilisant les morphismes de restriction induits par ceux du faisceau~$\Os$. Le caract\`ere unitaire du polyn\^ome~$Q$ assure que~$\Fs$ est un faisceau de $\Os_{\pi^{-1}(U)}$-alg\`ebres coh\'erent.

\begin{rem}
Le faisceau~$\Fs$ est l'image directe du faisceau structural d'une courbe analytique sur~$Y$. Celle-ci nous est donn\'ee comme un rev\^etement ramifi\'e de degr\'e~$n$ de~$\pi^{-1}(U)$.
\end{rem}

Soit $v\in\of{]}{0,u}{]}$. Posons 
$$V(B) = \left\{y\in Y(B)\, \big|\, |P(y)|\le v\right\}$$
et
$$V'(B) = \{y\in V(B)\, |\, P(y)\ne 0\}.$$
Remarquons que le compl\'ementaire de~$V'(B)$ dans~$V(B)$ est ferm\'e dans~$Y(B)$.

\begin{prop}\label{nonramifieberko}
Il existe un isomorphisme de $\Os_{V'(B)}$-alg\`ebres
$$\varphi : \Fs \to \Os^n$$
tel que, pour tout ouvert~$W$ de~$V'(B)$ et tout \'el\'ement~$s$ de~$\Fs(W)$, nous ayons
$$\varphi(\zeta s) = \tau(\varphi(s)),$$
o\`u~$\tau$ d\'esigne l'automorphisme du faisceau~$\Os^n$ qui consiste \`a faire agir la permutation cyclique $(1\ 2\ \cdots\ n)$ sur les coordonn\'ees.
\end{prop}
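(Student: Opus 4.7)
The idea is to apply Kummer theory after the change of variable $S = PT$: on $V'(B)$, the function $P$ is invertible, so the equation $S^n = P^n + \alpha$ becomes $T^n = 1 + \alpha/P^n$, whose right-hand side equals $1$ on $\pi^{-1}(B)$ (where $\alpha$ vanishes). Extracting an $n$-th root via Hensel's lemma will then produce the desired splitting.

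On a suitable representative of the germ $V'(B)$, the element $\alpha/P^n$ is a well-defined section of $\Os$ that vanishes on $\pi^{-1}(B)$. The polynomial $T^n - (1 + \alpha/P^n) \in \Os[T]$ thus admits $T = 1$ as an approximate root, and its derivative at $T = 1$ equals $n$, which is invertible in $\Os$ since $n$ is coprime to $p$. Using the henselianity of the local rings of a Berkovich space (\emph{cf.}~\cite{bleu}), I would lift this approximate root to a section $\beta \in \Os(V'(B))$ satisfying $\beta^n = 1 + \alpha/P^n$ and $\beta \equiv 1$ on $\pi^{-1}(B)$. Consequently, in $\Os(V'(B))[S]$,
$$Q(S) = \prod_{i=0}^{n-1}\bigl(S - \zeta^i P \beta\bigr).$$
The factors are pairwise coprime: their differences $(\zeta^i - \zeta^j) P \beta$ are invertible, because $\zeta^i - \zeta^j$ is a unit for $i \neq j$ (as $\zeta$ is a primitive $n$-th root of unity with $n$ invertible in $\Os$), $P$ is invertible on $V'(B)$, and $\beta$ is close to $1$.

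The Chinese Remainder Theorem then yields an isomorphism of $\Os_{V'(B)}$-algebras
$$\varphi \colon \Fs \xrightarrow{\ \sim\ } \prod_{i=0}^{n-1} \Os[S]/(S - \zeta^i P \beta) \simeq \Os^n, \quad s \longmapsto \bigl(s(P\beta),\, s(\zeta P\beta),\, \ldots,\, s(\zeta^{n-1} P\beta)\bigr).$$
The action of $\zeta$ on $\Fs$ meant in the statement is the $\Os$-algebra automorphism $s(S) \mapsto s(\zeta S)$, which is well defined because $Q(\zeta S) = Q(S)$. Writing $s'(S) = s(\zeta S)$, one has $s'(\zeta^i P \beta) = s(\zeta^{i+1} P \beta)$, so that the $i$-th coordinate of $\varphi(s')$ equals the $(i+1)$-th coordinate of $\varphi(s)$: this is precisely the cyclic permutation $\tau$.

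The main obstacle is the global existence of the Hensel lift $\beta$ on a neighborhood of $V'(B) \cap \pi^{-1}(B)$, and not merely in each local ring. Since the lift of the simple root $T = 1$ is unique, the local lifts glue automatically, so the only substantive input is the standard henselianity of the local rings of a Berkovich analytic space.
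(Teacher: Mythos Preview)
Your argument is correct and follows essentially the same route as the paper: exhibit a root of $Q$ in $\Os(V'(B))$ of the form $P\beta$ with $\beta^n=1+\alpha/P^n$, factor $Q$ completely, and read off the isomorphism together with its equivariance. The only difference is that the paper produces $\omega=P\beta$ directly as the convergent binomial series $P\sum_{i\ge 0} C_{1/n}^i(\alpha/P^n)^i$ (which makes sense since $n$ is prime to the residue characteristic and $|\alpha/P^n|=0$ on $V'(B)$), giving a global section in one stroke and bypassing your Hensel-plus-gluing step.
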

\begin{proof}
Le rayon de convergence de la s\'erie
$$(1+T)^{1/n} = \sum_{i=0}^{+\infty} C_{1/n}^i\, T^i \in k[\![T]\!]$$
est strictement positif. La s\'erie  
$$P\, \sum_{i=0}^{+\infty} C_{1/n}^i\, \left(\frac{\alpha}{P^n}\right)^{i}$$
d\'efinit donc un \'el\'ement~$\omega$ de~$\Os(V'(B))$, qui est une racine du polyn\^ome~$Q$. Nous avons alors l'\'egalit\'e
$$Q(S) = S^n-P^n-\alpha = \prod_{j=0}^n (S - \zeta^{j} \omega) \textrm{ dans } \Os(V'(B)).$$
Par cons\'equent, le morphisme
$$\begin{array}{ccc}
\Fs & \to & \Os^n\\
R(S) & \mapsto & \left(R(\omega), R(\zeta^{-1} \omega),\ldots, R(\zeta^{-(n-1)} \omega)\right) 
\end{array}$$
est un isomorphisme. On v\'erifie imm\'ediatement qu'il satisfait la condition requise.
\end{proof}

\begin{rem}
La premi\`ere partie du r\'esultat signifie que le rev\^etement associ\'e au faisceau~$\Fs$ est trivial au-dessus de~$V'(B)$. La seconde assure que le groupe $\of{\la}{\zeta}{\ra} \simeq \Z/n\Z$ agit sur le rev\^etement par une permutation cyclique des feuillets du lieu trivial.  
\end{rem}


\bigskip

Nous allons maintenant imposer des conditions sur les donn\'ees~$B$ et~$\alpha$ de fa\c{c}on que le faisceau~$\Fs$ soit associ\'e \`a un rev\^etement irr\'eductible.

\begin{defi}
Nous dirons que la partie~$B$ de~$X$ satisfait la {\bf condition~(CGI)} si elle est compacte et poss\`ede un syst\`eme fondamental de voisinages affino\"{\i}des g\'eo\-m\'e\-tri\-que\-ment int\`egres.
\end{defi}

Sous les conditions de cette d\'efinition, l'anneau~$\Os(B)$ est int\`egre et la partie~$B$ connexe. En particulier, le principe du prolongement analytique vaut sur~$B$ et les anneaux locaux en tous les points de~$B$ sont int\`egres.

\bigskip




\begin{lem}\label{Vxirr}
Supposons que la partie~$B$ de~$X$ satisfait la condition~(CGI). Alors, la partie~$V(B)$ de~$Y$ poss\`ede un syst\`eme fondamental de voisinages affino\"ides irr\'eductibles.

En particulier, pour tout point~$z$ de~$V(B)$, le morphisme naturel
$$\Os(V(B)) \to \Os_{z}$$
est injectif.
\end{lem}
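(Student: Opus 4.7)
Le plan est de construire un syst\`eme fondamental de voisinages affino\"{\i}des irr\'eductibles de~$V(B)$ dans~$Y$ sous forme de produits fibr\'es. Par l'hypoth\`ese~(CGI), le compact~$B$ poss\`ede un syst\`eme fondamental de voisinages affino\"{\i}des g\'eom\'etriquement int\`egres~$U$ dans~$X$. D'autre part, pour $v' \in \of{]}{v,u}{]}$, la partie
$$V_{v'} = \{y \in \P^{1,\an}_{k}\, |\, |P(y)| \le v'\}$$
est un voisinage affino\"{\i}de de~$V_{v}$ dans~$\P^{1,\an}_{k}$, et ces parties forment un syst\`eme fondamental de voisinages lorsque~$v'$ tend vers~$v$ par valeurs sup\'erieures. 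On consid\`ere alors les parties
$$W_{U,v'} := \pi^{-1}(U) \cap \lambda^{-1}(V_{v'}) \simeq U \times_{k} V_{v'},$$
qui sont des voisinages affino\"{\i}des de~$V(B)$ dans~$Y$ formant, lorsque le couple~$(U,v')$ varie, un syst\`eme fondamental.

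Tout revient alors \`a v\'erifier que chaque $W_{U,v'}$ est irr\'eductible. Pour cela, on commence par \'etablir l'int\'egralit\'e de~$V_{v'}$, puis on invoque le fait, cons\'equence des th\'eor\`emes~7.16 et~8.4 de~\cite{excellence}, que le produit fibr\'e sur~$k$ d'un espace analytique g\'eom\'etriquement int\`egre avec un espace int\`egre est int\`egre. Le point d\'elicat est le caract\`ere int\`egre de~$V_{v'}$ ; pour l'obtenir, on proc\`ede par extension des scalaires \`a une cl\^oture s\'eparable~$k^{s}$ : pour $v'$ assez petit, le polyn\^ome~$P$ se scinde sur~$k^{s}$ et $(V_{v'})_{k^{s}}$ devient la r\'eunion disjointe de $n=\deg(P)$ disques ferm\'es centr\'es aux racines de~$P$, chacun g\'eom\'etriquement int\`egre. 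Le groupe $\mathrm{Gal}(k^{s}/k)$ permute ces disques transitivement, puisque~$P$ est irr\'eductible sur~$k$ ; il s'ensuit que $V_{v'}$ poss\`ede une seule composante irr\'eductible, et demeure r\'eduit par descente fid\`element plate depuis~$k^{s}$, donc est int\`egre.

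Pour la seconde assertion, on remarque que, sur tout voisinage affino\"{\i}de $W_{U,v'}$ irr\'eductible et r\'eduit, le principe du prolongement analytique fournit l'injectivit\'e du morphisme naturel $\Os(W_{U,v'}) \to \Os_{z}$ pour tout point~$z$ de~$W_{U,v'}$. Comme $\Os(V(B)) = \varinjlim \Os(W_{U,v'})$ par d\'efinition du faisceau des fonctions surconvergentes, et qu'une limite inductive de morphismes injectifs \`a but commun reste injective, l'injectivit\'e de $\Os(V(B)) \to \Os_{z}$ s'ensuit.
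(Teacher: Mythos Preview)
Your approach coincides with the paper's: exhibit the products $W_{U,v'}\simeq U\times_{k}V_{v'}$ as a fundamental system of affinoid neighborhoods of~$V(B)$ and deduce their irreducibility from \cite{excellence}, th\'eor\`eme~8.4, using that~$U$ is geometrically irreducible and~$V_{v'}$ is irreducible. The paper simply asserts the latter as a direct consequence of the irreducibility of~$P$ over~$k$; you instead supply an argument by base change to~$k^{s}$ and Galois descent, and you also spell out the ``en particulier'' clause via the colimit description of~$\Os(V(B))$, which the paper leaves implicit.

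There is one small gap in your descent argument. You write ``pour~$v'$ assez petit'' in order to obtain exactly~$n$ disjoint discs after base change, but~$v'$ is constrained to lie in~$\of{]}{v,u}{]}$ with~$v\in\of{]}{0,u}{]}$ already fixed; if~$v$ is not itself small, no admissible~$v'$ satisfies your smallness condition. The repair is immediate: for \emph{every} $v'>0$, the domain $(V_{v'})_{\widehat{k^{a}}}$ is a disjoint union of closed discs (at most~$n$ of them, since discs in the ultrametric world are either disjoint or nested), each geometrically integral; every component contains at least one root of~$P$, and since $\mathrm{Gal}(k^{s}/k)$ permutes the roots transitively it permutes the components transitively as well. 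Your descent argument then goes through unchanged. Alternatively, observe that~$V_{v'}$ is a smooth, hence normal, affinoid domain in~$\E{1}{k}$; the transitivity above gives connectedness, and for a normal space connected implies irreducible.
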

\begin{proof}
Soient~$U'$ un voisinage affino\"ide g\'eom\'etriquement irr\'eductible de~$B$ dans~$U$ et~$v'$ un nombre r\'eel strictement sup\'erieur \`a~$v$. La partie de~$Y$ d\'efinie par
$$\{z\in \pi^{-1}(U')\, \big|\, |P(z)|\le v'\}$$
est alors irr\'eductible, d'apr\`es~\cite{excellence}, th\'eor\`eme~8.4. En effet, ce n'est autre que le produit, au-dessus de~$k$, de l'espace g\'eom\'etriquement irr\'eductible~$U'$ par l'espace
$$\left\{\left. x\in\P^{1,\an}_{k}\, \right|\, |P(x)|\le v'\right\},$$
qui est irr\'eductible, car le polyn\^ome~$P$ est irr\'eductible sur~$k$.

La condition~(CGI) assurent que l'ensemble des parties de la forme pr\'ec\'edente est un syst\`eme fondamental de voisinages de~$V(B)$ dans~$Y$. 
\end{proof}

{\bf Nous supposerons d\'esormais que la partie compacte~$\boldsymbol{B}$ de~$\boldsymbol{X}$ satisfait la condition~(CGI).}

\bigskip

\begin{defi}
Nous dirons que l'\'el\'ement~$\alpha$ de~$\Os(B)$ satisfait la condition~($\textrm{I}_{n,K}$) s'il existe un point~$x$ de~$B$ qui v\'erifie les deux conditions suivantes :
\begin{enumerate}[\it i)]
\item les corps~$K$ et~$\Frac(\Os_{x})$ sont lin\'eairement disjoints sur~$k$ ;
\item le polyn\^ome~$S^n-\alpha$ est irr\'eductible sur le corps $\Frac(\Os_{x})\otimes_{k} K$. 
\end{enumerate}
\end{defi}

\begin{lem}\label{lemintlocalberko}
Supposons que l'\'el\'ement~$\alpha$ de~$\Os(B)$ satisfait la condition~($\textrm{I}_{n,K}$). Alors, le polyn\^ome~$Q(S)=S^n-P^n-\alpha$ est irr\'eductible sur le corps Frac$(\Os(V(B)))$. En particulier, l'anneau $\Fs(V(B))$ est int\`egre.
\end{lem}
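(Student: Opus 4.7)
Le plan est de combiner la th\'eorie de Kummer avec l'hypoth\`ese d'irr\'eductibilit\'e locale au point~$x$. Puisque~$\zeta$ est une racine primitive~$n\eme$ de l'unit\'e dans~$\Os(V)$, elle appartient au corps $F = \Frac(\Os(V(B)))$, et la th\'eorie de Kummer assure que $Q(S) = S^n - (P^n+\alpha)$ est irr\'eductible sur~$F$ si, et seulement si, pour tout nombre premier~$p$ divisant~$n$, l'\'el\'ement $P^n+\alpha$ n'est pas une puissance~$p\eme$ dans~$F$. L'int\'egrit\'e de $\Fs(V(B)) = \Os(V(B))[S]/(Q(S))$ en d\'ecoule, puisque~$Q$ est unitaire et que $\Os(V(B))$ est int\`egre (par le lemme~\ref{Vxirr}, quitte \`a passer aux r\'eductions des voisinages affino\"ides irr\'eductibles). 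Je raisonne par l'absurde et suppose l'existence d'un \'el\'ement $g \in F$ et d'un nombre premier~$p$ divisant~$n$ tels que $P^n+\alpha = g^p$.

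Gr\^ace au lemme~\ref{Vxirr}, je fixe un voisinage affino\"ide int\`egre~$W$ de~$V(B)$ dans~$Y$ sur lequel~$g$ est d\'efini, c'est-\`a-dire tel que $g\in\Frac(\Os(W))$. Le lieu d'annulation de~$P$ sur~$W$ y d\'efinit un diviseur effectif. Soit~$v$ la valuation discr\`ete associ\'ee \`a une composante irr\'eductible de ce diviseur contenant un point~$z$ de~$V(B)$ au-dessus de~$x$; un tel~$z$ existe, puisqu'il suffit de prendre le point de la fibre $\pi^{-1}(x) \simeq \P^{1,\an}_{\Hs(x)}$ d\'efini par l'annulation de~$P$. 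Le point cl\'e est que $v(\alpha) = 0$ : l'\'el\'ement~$\alpha$ provient, par~$\pi^*$, d'une fonction analytique sur~$X$ qui n'est pas identiquement nulle au voisinage de~$x$ (sinon la condition~$(\textrm{I}_{n,K})$ ne saurait \^etre satisfaite), tandis que le diviseur $\{P=0\}$ est la fibre de~$\lambda$ au-dessus de~$t$; la restriction de~$\pi$ \`a la composante consid\'er\'ee dominant un voisinage ouvert de~$x$ dans~$X$, la fonction~$\alpha$ n'y saurait s'annuler g\'en\'eriquement. On obtient ainsi $v(P^n+\alpha) = \min(v(P^n), v(\alpha)) = 0$, d'o\`u $v(g) = 0$ via $p\, v(g) = 0$.

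On peut alors r\'eduire l'\'egalit\'e $P^n+\alpha = g^p$ modulo~$P$ dans le corps r\'esiduel $\kappa(v)$ du DVR associ\'e, ce qui donne $\bar\alpha = \bar{g}^p$. Ce corps r\'esiduel s'injecte dans le corps des fonctions de la composante choisie de~$\{P=0\} \cap W$, et celle-ci s'identifie \`a un ouvert de l'espace analytique $X \times_k K$ obtenu en identifiant la fibre de~$\lambda$ au-dessus de~$t$ avec $X \times_k \Hs(t) = X \times_k K$. En sp\'ecialisant en~$z$, on obtient une \'egalit\'e $\alpha = h^p$ dans~$\Frac(\Os_x)\otimes_k K$, l'identification des corps de fractions r\'esultant de la finitude et de la s\'eparabilit\'e de $K/k$ jointes \`a la disjonction lin\'eaire de~$(\textrm{I}_{n,K})(i)$. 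Ceci contredit directement~$(\textrm{I}_{n,K})(ii)$, qui, par le crit\`ere de Kummer (applicable car~$K$ contient~$\zeta$), \'equivaut \`a ce que $\alpha$ ne soit puissance~$p\eme$ d'aucun \'el\'ement de~$\Frac(\Os_x)\otimes_k K$, pour aucun nombre premier~$p$ divisant~$n$.

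L'obstacle principal est d'ordre technique : il s'agit de justifier proprement l'existence et les propri\'et\'es de la valuation~$v$, \'eventuellement via une normalisation de~$W$ le long du diviseur $\{P=0\}$, et surtout l'identification g\'eom\'etrique permettant de plonger le corps r\'esiduel~$\kappa(v)$ dans $\Frac(\Os_x)\otimes_k K$; cette derni\`ere repose sur la structure de produit fibr\'e analytique de~$Y$ au-dessus du point~$t$ de r\'esidu~$K$.
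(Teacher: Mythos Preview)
Your Kummer-plus-reduction-mod-$P$ argument is correct, and in fact it is precisely the mechanism the paper has in mind when it asserts that irreducibility of $S^n-\alpha$ over $\Frac(\Os_{Z,z})$ forces irreducibility of $Q$ over $\Frac(\Os_z)$. The difference is where you run it. The paper uses the second clause of lemme~\ref{Vxirr} --- the injection $\Os(V(B))\hookrightarrow\Os_z$ for every $z\in V(B)$ --- to reduce at once to the \emph{larger} field $\Frac(\Os_z)$, taking for $z$ the unique point of $\pi^{-1}(x)$ where $P$ vanishes. There everything is purely local: $(P)$ is a prime of the regular (hence normal) local ring $\Os_z$, the quotient $\Os_z/(P)=\Os_{Z,z}$ is identified directly with $\Os_x[T]/(P)$, and its fraction field is $\Frac(\Os_x)\otimes_k K$ by the linear disjointness in~$(\textrm{I}_{n,K})$; your reduction-mod-$P$ step then goes through with no further geometry. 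By contrast, your global route through an affinoid neighborhood~$W$, a divisorial valuation on $\Frac(\Os(W))$, a choice of irreducible component of $\{P=0\}\cap W$, and the passage you call ``sp\'ecialisation'' at~$z$ (which is really localization, not specialization) rebuilds this same local picture from the outside. The technical obstacles you flag at the end --- normalization along the divisor, the embedding of $\kappa(v)$ into $\Frac(\Os_x)\otimes_k K$ --- all evaporate once you work directly in $\Os_z$.
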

\begin{proof} 
Soit~$x$ un point de~$B$ satisfaisant les propri\'et\'es \'enonc\'ees dans la d\'efinition de la condition~($\textrm{I}_{n,K}$). Puisque les corps~$K$ et~$\Frac(\Os_{x})$ sont lin\'eairement disjoints sur~$k$, le polyn\^ome~$P$ est ir\-r\'e\-duc\-ti\-ble sur~$\Os_{x}$. Il l'est donc encore dans $\Hs(x)[T]$, puisque le corps~$\kappa(x)$ et l'anneau local~$\Os_{x}$ sont hens\'eliens. 

Notons~$Z(B)$ l'ensemble des points de~$Y(B)$ en lesquels la fonction~$P$ est nulle. D'apr\`es le raisonnement pr\'ec\'edent, la trace de la fibre~$\pi^{-1}(x)$ sur~$Z$ comporte un seul point, que nous noterons~$z$. Notons~$\Os_{Z}$ le faisceau structural sur~$Z$. Nous avons alors un isomorphisme
$$\Os_{x}[T]/(P(T)) \xrightarrow[]{\sim} \Os_{Z,z}.$$

Le polyn\^ome~$S^n-\alpha$ est donc irr\'eductible sur le corps $\Frac(\Os_{Z,z})$, isomorphe \`a $\Frac(\Os_{x})\otimes_{k} K$. Par cons\'equent, le polyn\^ome~$Q(S)$ est irr\'eductible sur le corps $\Frac(\Os_{z})$.

D'apr\`es le lemme~\ref{Vxirr}, le morphisme naturel $\Os(V(B))\to \Os_{z}$ est injectif. Par cons\'equent, le corps $\Frac(\Os(V(B)))$ est un sous-corps de $\Frac(\Os_{z})$. On en d\'eduit que le polyn\^ome~$Q(S)$ est irr\'eductible sur le corps $\Frac(\Os(V(B)))$.

Le polyn\^ome~$Q(S)$ \'etant unitaire, l'unicit\'e de la division euclidienne assure que le morphisme 
$$\Os(V(B))[S]/(Q(S)) \to \Frac(\Os(V(B)))[S]/(Q(S))$$
est injectif. Puisque l'anneau au but est int\`egre, celui \`a la source, qui n'est autre que l'anneau~$\Fs(V(B))$, l'est \'egalement.
\end{proof}

\begin{rem}
Ce r\'esultat signifie que la courbe associ\'ee au faisceau~$\Fs$ est int\`egre, c'est-\`a-dire r\'eduite et irr\'eductible.
\end{rem}

\subsubsection{Extensions d'Artin-Schreier-Witt}\label{Artin-Schreier-Witt}

Il nous reste \`a traiter le cas des groupes cycliques dont l'ordre n'est pas premier \`a l'exposant caract\'eristique~$p$ du corps~$k$. Nous supposons d\'esormais que~$p$ est un nombre premier et chercherons \`a construire un rev\^etement cyclique d'ordre~$n=p^r$, o\`u~$r$ est un entier sup\'erieur \`a~$1$. Il s'agit essentiellement de remplacer, dans le num\'ero pr\'ec\'edent, la th\'eorie de Kummer par celle d'Artin-Schreier-Witt. Nous nous contenterons d'indiquer les grandes lignes de la preuve.

\bigskip

Dans ce num\'ero, comme dans le pr\'ec\'edent, {\bf nous supposerons que la partie compacte~$\boldsymbol{B}$ de~$\boldsymbol{X}$ satisfait la condition~(CGI).}

Soit~$\alpha$ un \'el\'ement de~$\Os(B)$ nul en tout point de~$B$. 

\begin{defi}
Nous dirons que l'\'el\'ement~$\alpha$ de~$\Os(B)$ satisfait la condition~($\textrm{I}_{p,K}$) s'il existe un point~$x$ de~$B$ qui v\'erifie les deux conditions suivantes :
\begin{enumerate}[\it i)]
\item les corps~$K$ et~$\Frac(\Os_{x})$ sont lin\'eairement disjoints sur~$k$ ;
\item le polyn\^ome~$S^p-\alpha$ est irr\'eductible sur le corps $\Frac(\Os_{x})\otimes_{k} K$. 
\end{enumerate}
\end{defi}

Soit~$v>0$. Posons
$$V(B) = \left\{y\in Y(B)\, \big|\, |P(y)|\le v\right\}$$
et
$$V'(B) = \{y\in V(B)\, |\, P(y)\ne 0\}.$$

Notons~$W_{r}$ l'anneau des vecteurs de Witt de longueur~$r$ sur~$\Os(V(B))[S_{0},\ldots,S_{r-1}]$. Posons
$$S=(S_{0},\ldots,S_{r-1}) \in W_{r}$$  
et, pour tout \'el\'ement~$a$ de~$\Os(V(B))[S_{0},\ldots,S_{r-1}]$,
$$\{a\}=(a,0,\ldots,0)\in W_{r}.$$
Pour tout $i\in\cn{0}{r-1}$, d\'efinissons un polyn\^ome $Q_{i}(S_{0},\ldots,S_{r-1})$ \`a coefficients dans~$\Os(V(B))$ par la formule
$$(Q_{0},\ldots,Q_{r-1}) = F(S) - \{P\}^{p-1}S - \{\alpha\} \textrm{ dans } W_{r}.$$
Le groupe~$\Z/p^r\Z$ agit sur l'anneau $\Os(V(B))[S_{0},\ldots,S_{r-1}]/(Q_{0},\ldots,Q_{r-1})$ en laissant stable~$\Os(V(B))$ et en envoyant~$S_{i}$, pour $i\in\cn{0}{r-1}$, sur la $(i+1)^\textrm{\`eme}$ coordonn\'ee du vecteur~$S+\{P\}$ dans~$W_{r}$. Par analogie avec la construction pr\'ec\'edente, nous d\'efinissons un faisceau sur~$V(B)$ par
$$\Fs = \Os[S_{0},\ldots,S_{r-1}]/(Q_{0},\ldots,Q_{r-1}).$$ 

Les propri\'et\'es des vecteurs de Witt montrent que, pour tout $i\in\cn{0}{r-1}$, nous avons
$$Q_{i} = S_{i}^p - P^{(p-1)p^i}S_{i} \mod (\alpha, Q_{0},\ldots,Q_{i-1}).$$

Soit $y\in V'(B)$. L'image~$S_{0}^p-P^{p-1}S_{0}$ du polyn\^ome~$Q_{0}(S_{0})$ dans le corps r\'esiduel~$\kappa(y)$ de l'anneau local~$\Os_{y}$ est scind\'e \`a racines simples. Puisque cet anneau local est hens\'elien, le polyn\^ome~$P_{0}(S_{0})$ poss\`ede~$p$ racines simples dans~$\Os_{y}$. En raisonnant par r\'ecurrence sur le nombre de variables, on montre ainsi que le syst\`eme d'\'equa\-tions polynomiales donn\'e par $Q_{0},\ldots,Q_{r-1}$ poss\`ede exactement~$p^r$ racines $\omega_{1},\ldots,\omega_{p^r}$ dans~$\Os_{y}^r$ et que le morphisme
$$\begin{array}{cccc}
\psi : & \Os_{y}[S_{0},\ldots,S_{r-1}]/(Q_{0},\ldots,Q_{r-1}) & \to & \Os_{y}^{p^r}\\
& R(S_{0},\ldots,S_{r-1}) & \mapsto & (R(\omega_{i}))_{1\le i\le p^r}
\end{array}$$
est un isomorphisme. On en d\'eduit que le rev\^etement est trivial sur~$V'(B)$, comme \`a la proposition~\ref{nonramifieberko}.

Supposons que l'\'el\'ement~$\alpha$ de~$\Os(B)$ satisfait la condition~($\textrm{I}_{p,K}$). L'\'enonc\'e du lemme~\ref{lemintlocalberko} vaut alors encore. Pour le d\'emontrer, l'on remplace simplement les arguments de la th\'eorie de Kummer par ceux de la th\'eorie d'Artin-Schreier-Witt. L'argument-cl\'e consiste \`a utiliser le fait que le polyn\^ome $S^p-\alpha$ est irr\'eductible sur~$\Os_{x}$, o\`u~$x$ est un point de~$B$ satisfaisant les propri\'et\'es \'enonc\'ees dans la d\'efinition de la condition~($\textrm{I}_{p,K}$), et \`a en d\'eduire que le polyn\^ome $S^p-P^{p-1}S - \alpha$ est irr\'eductible sur~$\Os_{z}$, o\`u~$z$ d\'esigne l'unique point de la fibre~$\pi^{-1}(x)$ en lequel~$P$ s'annule.

\subsection{Raccord et retour \`a l'alg\`ebre}\label{numerorecollementberko}

Soit~$G$ un groupe fini. Soient $g_{1},\ldots,g_{t}$, avec~$t\in\N^*$, des g\'en\'erateurs du groupe~$G$. Nous pouvons les choisir de fa\c{c}on que, pour tout \'el\'ement~$i$ de~$\cn{1}{t}$, il existe un nombre premier~$p_{i}$ et un entier~$r_{i}\ge 1$ tels que le sous-groupe de~$G$ engendr\'e par~$g_{i}$ soit cyclique d'ordre~$p_{i}^{r_{i}}$. Nous supposerons qu'il existe $s\in\cn{1}{t}$ tel que, pour tout $i\le s$, $p_{i}\ne p$ et, pour tout $i\ge s+1$, $p_{i}=p$.

Nous nous pla\c{c}ons sous les hypoth\`eses suivantes :
\begin{itemize}
\item la partie~$B$ de~$X$ satisfait la condition~(CGI) ;
\item pour tout \'el\'ement~$i$ de~$\cn{1}{s}$, il existe une extension s\'eparable~$K_{i}$ de~$k$ contenant une racine primitive $(p_{i}^{r_{i}})\eme$ de l'unit\'e et un \'el\'ement~$\alpha_{i}$ de~$\Os(B)$ qui satisfait la condition~$\textrm{I}_{p_{i}^{r_{i}},K_{i}}$ ;
\item pour tout \'el\'ement~$i$ de~$\cn{s+1}{t}$, il existe une extension s\'eparable~$K_{i}$ de~$k$ et un \'el\'ement~$\alpha_{i}$ de~$\Os(B)$ qui satisfait la condition~$\textrm{I}_{p,K_{i}}$ ;
\item les corps $K_{1},\ldots,K_{t}$ sont deux \`a deux non isomorphes.
\end{itemize}

Supposons que les \'el\'ement $\alpha_{1},\ldots,\alpha_{t}$ sont nuls en tout point de~$B$. Soit~$i$ un \'el\'ement de~$\cn{1}{t}$. Construisons par la m\'ethode du num\'ero~\ref{Kummer} ou~\ref{Artin-Schreier-Witt} un rev\^etement galoisien de groupe~$\Z/p_{i}^{r_{i}}\Z$. Il est d\'efini au-dessus d'une partie~$V_{i}$ et trivial au-dessus d'une partie~$V'_{i}$. Notons $\textrm{Ind}_{\la g_{i} \ra}^G(V_{i})$ le $G$-rev\^etement induit. 

Puisque les corps $K_{1},\ldots,K_{t}$ sont deux \`a deux non isomorphes, nous pouvons choisir les parties $V_{1},\ldots,V_{t}$ de fa\c{c}on qu'elles soient deux \`a deux disjointes (il suffit de choisir des \'el\'ements~$v$ assez petits). Pour $i\in\cn{1}{t}$, posons $V''_{i} = V_{i}\setminus V'_{i}$. C'est une partie ferm\'ee de~$Y(B)$. Posons
$$Y'(B) = Y(B)\setminus \bigcup_{1\le i\le t} V''_{i}$$ 
et consid\'erons le $G$-rev\^etement $\textrm{Ind}_{\la e \ra}^G(Y'(B))$ induit par le rev\^etement trivial au-dessus de~$Y'(B)$. 

Raccordons, \`a pr\'esent, les diff\'erents rev\^etements selon les relations entre les \'el\'ements du groupe~$G$, par la m\'ethode d\'ecrite au num\'ero~\ref{sectioncomplexe}. Nous obtenons un rev\^etement de~$Y(B)$, galoisien de groupe~$G$. On montre \`a l'aide du lemme~\ref{lemintlocalberko} et de son analogue dans le cas des rev\^etements d'Artin-Schreier-Witt, qu'il est int\`egre. Un th\'eor\`eme du type GAGA (\emph{cf.} corollaire~\ref{GAGAvoiscompact}) assure qu'il est alg\'ebrique. En passant aux corps de fonctions, nous obtenons donc finalement une extension finie et galoisienne de groupe~$G$
$$\Ms(Y(B)) = \Frac(\Os(B))(T) \to L,$$
o\`u~$\Ms$ d\'esigne le faisceau des fonctions m\'eromorphes. La construction que nous avons men\'ee \'etant purement g\'eom\'etrique, on se convainc ais\'ement que cette extension est r\'eguli\`ere, c'est-\`a-dire que le corps~$\Frac(\Os(B))$ est alg\'ebriquement ferm\'e dans le corps~$L$. 

\begin{thm}\label{galoisberko}
Il existe une extension finie du corps $\Frac(\Os(B))(T)$ qui est r\'e\-gu\-li\`ere et galoisienne de groupe de Galois~$G$.
\end{thm}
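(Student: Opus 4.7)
The plan is to follow the patching strategy recalled in section~\ref{sectioncomplexe}, transposed to the Berkovich framework using the local constructions of section~\ref{constructionlocaleberko}.

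First I would produce, for each generator $g_i$ with $i\in\cn{1}{t}$, a cyclic cover of order $p_i^{r_i}$ defined on a neighborhood $V_i(B)$ of the locus $\{P_i=0\}$ in $Y(B)$, where $P_i$ denotes the minimal unitary polynomial of a primitive element of $K_i/k$. For $i\le s$ this is furnished by the Kummer construction of~\ref{Kummer} (which exploits the primitive $(p_i^{r_i})\eme$ root of unity available in $K_i$), and for $i\ge s+1$ by the Artin--Schreier--Witt construction of~\ref{Artin-Schreier-Witt}. In both situations, the hypothesis that $\alpha_i$ vanishes on $B$ and satisfies $(\textrm{I}_{p_i^{r_i},K_i})$ guarantees, via lemma~\ref{lemintlocalberko} and its Artin--Schreier--Witt analogue, that the cover is integral, and both constructions provide a trivialization over the open part $V'_i(B) = \{P_i\ne 0\}$. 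Since the extensions $K_1,\ldots,K_t$ are pairwise non-isomorphic, the points defined by $P_i=0$ in $\P^{1,\an}_k$ are distinct, so by shrinking the thresholds $v_i>0$ the pieces $V_i(B)$ can be made pairwise disjoint.

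The second step is the patching itself. I would induce each local cover to a $G$-cover $\textrm{Ind}_{\la g_i\ra}^G V_i(B)$, label its sheets by elements of $G$, and glue these to the trivial $G$-cover of $Y'(B) = Y(B)\setminus\bigcup_i V''_i(B)$ (where $V''_i(B) = V_i(B)\setminus V'_i(B)$ is the ramified locus) by matching sheets with the same $G$-label, exactly as in figure~\ref{figurerecollement}. The resulting analytic $G$-cover $\Zs\to Y(B)$ has integral restriction to each $V_i(B)$ and is trivial over $Y'(B)$; the gluing loci $V'_i(B)\cap Y'(B)$ being non-empty, connectedness of $\Zs$ follows, and integrality then propagates globally.

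The third step is the return to algebra. Here I would invoke the relative GAGA theorem of the appendix (corollary~\ref{GAGAvoiscompact}) to algebraize the coherent $\Os$-algebra defining $\Zs$, producing an algebraic $G$-cover over the relative projective line above $\Spec\Os(B)$; passing to function fields then yields the desired finite Galois extension $L$ of $\Frac(\Os(B))(T)$ of group $G$. Regularity, that is, the algebraic closedness of $\Frac(\Os(B))$ in $L$, is immediate from the geometric nature of the patching, which is preserved under any base change to a geometric point of $B$. The main obstacle is precisely this algebraization: once the local cyclic covers and the patching data are in hand, connectedness and integrality are essentially combinatorial, but transferring the analytic object to an algebraic one—and thereby justifying the passage to function fields—genuinely requires the hypothesis~(CGI) on $B$ together with the relative affinoid GAGA machinery of the appendix.
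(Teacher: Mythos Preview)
Your proposal is correct and follows essentially the same approach as the paper: local cyclic covers via Kummer or Artin--Schreier--Witt on pairwise disjoint neighborhoods $V_i(B)$ (disjointness coming from the pairwise non-isomorphism of the $K_i$), induction to $G$-covers and patching over $Y'(B)$ as in section~\ref{sectioncomplexe}, integrality via lemma~\ref{lemintlocalberko}, algebraization via corollary~\ref{GAGAvoiscompact}, and regularity from the geometric nature of the construction. The paper's proof is exactly this outline, with no additional ingredient.
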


\subsection{Conclusion}\label{conclusion}

Regroupons les r\'esultats que nous avons obtenus jusqu'ici.

\begin{thm}\label{galoisinverse}
Soient~$k$ un corps muni d'une valeur absolue ultram\'etrique pour laquelle il est complet. Soient~$X$ un espace $k$-analytique et~$B$ une partie compacte de~$X$ qui poss\`ede un syst\`eme fondamental de voisinages affino\"ides g\'eom\'etriquement int\`egres. Supposons que pour tout nombre premier~$q$ diff\'erent de la caract\'eristique du corps~$k$ et tout entier $r\in\N^*$, il existe une famille infinie~$\Ks_{q^r}$ de corps deux \`a deux non isomorphes satisfaisant les propri\'et\'es suivantes :
\begin{enumerate}[\it i)]
\item tout \'el\'ement de~$\Ks_{q^r}$ est une extension finie et s\'eparable de~$k$ contenant une racine primitive $(q^r)\eme$ de l'unit\'e ;
\item pour tout \'el\'ement~$K$ de~$\Ks_{q^r}$, il existe un \'el\'ement~$x$ de~$B$ et un \'el\'ement~$\alpha$ de~$\Os(B)$ nul en tout point de~$B$ tels que les corps~$K$ et~$\Frac(\Os_{x})$ soient lin\'eairement disjoints et le polyn\^ome $S^{q^r}-\alpha$ soit irr\'eductible sur leur compositum.
\end{enumerate} 
Si la caract\'eristique du corps~$k$ est un nombre premier~$p$, supposons en outre qu'il existe une famille infinie~$\Ks_{p}$ de corps deux \`a deux non isomorphes satisfaisant les propri\'et\'es suivantes :
\begin{enumerate}[\it i)]
\item tout \'el\'ement de~$\Ks_{p}$ est une extension finie et s\'eparable de~$k$ ;
\item pour tout \'el\'ement~$K$ de~$\Ks_{p}$, il existe un \'el\'ement~$x$ de~$B$  et un \'el\'ement~$\alpha$ de~$\Os(B)$ nul en tout point de~$B$ tels que les corps~$K$ et~$\Frac(\Os_{x})$ soient lin\'eairement disjoints et le polyn\^ome $S^{p}-\alpha$ soit irr\'eductible sur leur compositum.
\end{enumerate} 

Alors, tout groupe fini est groupe de Galois d'une extension finie et r\'eguli\`ere du corps $\Frac(\Os(B))(T)$. 
\end{thm}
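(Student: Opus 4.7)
Ma strat\'egie consiste \`a ramener l'\'enonc\'e au th\'eor\`eme~\ref{galoisberko}, en extrayant, pour tout groupe fini~$G$ donn\'e, les donn\'ees requises (g\'en\'erateurs~$g_{i}$, extensions~$K_{i}$, \'el\'ements~$\alpha_{i}$) des familles infinies fournies par les hypoth\`eses. Le travail g\'eom\'etrique de construction locale, de raccord et de retour \`a l'alg\`ebre ayant d\'ej\`a \'et\'e men\'e, il ne reste qu'\`a v\'erifier que toutes les hypoth\`eses arithm\'etiques du th\'eor\`eme~\ref{galoisberko} sont effectivement satisfaites.

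Je commencerai par d\'ecomposer le groupe~$G$ en g\'en\'erateurs dont l'ordre est une puissance de nombre premier. Pour cela, j'utiliserai le fait que tout \'el\'ement~$g$ d'un groupe fini, d'ordre $n = p_{1}^{a_{1}}\cdots p_{l}^{a_{l}}$, s'\'ecrit comme produit de puissances $g^{m_{1}},\ldots,g^{m_{l}}$ qui commutent deux \`a deux et dont les ordres respectifs sont~$p_{1}^{a_{1}},\ldots, p_{l}^{a_{l}}$. Partant d'une famille g\'en\'eratrice quelconque, j'obtiendrai ainsi une famille $g_{1},\ldots,g_{t}$ telle que chaque~$g_{i}$ engendre un sous-groupe cyclique d'ordre~$p_{i}^{r_{i}}$~; quitte \`a r\'eordonner les g\'en\'erateurs, je pourrai supposer que $p_{i}\ne p$ pour $i\le s$ et que $p_{i}=p$ pour $i\ge s+1$, o\`u~$p$ d\'esigne l'exposant caract\'eristique du corps~$k$.

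Je choisirai ensuite, pour chaque~$i\le s$, une extension~$K_{i}$ dans la famille infinie~$\Ks_{p_{i}^{r_{i}}}$ ainsi qu'un \'el\'ement~$\alpha_{i}\in\Os(B)$ associ\'e par l'hypoth\`ese~(\textit{ii}), et de m\^eme, dans le cas $\textrm{car}(k)=p$, une extension~$K_{i}\in\Ks_{p}$ pour $i\ge s+1$. L'infinitude des familles~$\Ks_{q^r}$ et~$\Ks_{p}$ me permettra d'effectuer ces choix de fa\c{c}on que les~$K_{i}$ soient deux \`a deux non isomorphes~: \`a chaque \'etape, je n'aurai qu'un nombre fini d'isomorphismes \`a \'eviter, tandis que la famille dans laquelle je puise est infinie. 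La disjonction lin\'eaire de~$K_{i}$ et~$\Frac(\Os_{x_{i}})$ garantira que leur compositum s'identifie canoniquement au produit tensoriel $\Frac(\Os_{x_{i}})\otimes_{k} K_{i}$, de sorte que l'irr\'eductibilit\'e donn\'ee par l'hypoth\`ese co\"{\i}ncide avec celle exig\'ee par la condition~$\textrm{I}_{p_{i}^{r_{i}},K_{i}}$ (pour $i\le s$) ou~$\textrm{I}_{p,K_{i}}$ (pour $i\ge s+1$).

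Toutes les hypoth\`eses du th\'eor\`eme~\ref{galoisberko} seront alors r\'eunies, et son application conclura la d\'emonstration. Aucun obstacle substantiel ne se pr\'esente dans ce plan, l'essentiel du r\'esultat \'etant d\'ej\`a encod\'e dans le th\'eor\`eme~\ref{galoisberko}~; la vraie difficult\'e, dans les applications qui suivront, sera plut\^ot de construire explicitement des familles~$\Ks_{q^r}$ et~$\Ks_{p}$ v\'erifiant les propri\'et\'es requises au-dessus des corps valu\'es qui nous int\'eressent.
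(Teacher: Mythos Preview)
Your proposal is correct and follows exactly the approach of the paper: the theorem is presented there as a direct regrouping of the results already obtained, and the reduction to th\'eor\`eme~\ref{galoisberko} via a choice of generators of prime-power order and pairwise non-isomorphic extensions~$K_i$ drawn from the infinite families is precisely the intended argument. Your explicit verification that the infinitude of the families~$\Ks_{q^r}$ and~$\Ks_p$ allows the~$K_i$ to be chosen pairwise non-isomorphic, and that linear disjointness makes the compositum coincide with the tensor product appearing in conditions~$\textrm{I}_{n,K}$, fills in details the paper leaves implicit.
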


\begin{rem}
Ce th\'eor\`eme ne contient malheureusement aucun r\'esultat qui ne soit d\'ej\`a connu. En effet, le corps $\Frac(\Os(B))$ contient toujours un corps complet pour une valeur absolue non triviale (un corps de s\'eries de Laurent engendr\'e par l'un des \'el\'ements~$\alpha$ lorsque~$k$ est trivialement valu\'e) et, sur un tel corps, le r\'esultat est d\^u \`a D.~Harbater (\emph{cf.}~\cite{arithline}, corollary~2.4). 
\end{rem}

Nous allons, \`a pr\'esent, appliquer ce r\'esultat g\'en\'eral dans des cas particuliers.

\begin{cor}\label{corcomplet}
Soit~$k$ un corps muni d'une valeur absolue ultram\'etrique non triviale pour laquelle il est complet. Alors, tout groupe fini est groupe de Galois d'une extension finie et r\'eguli\`ere du corps $k(T)$. 
\end{cor}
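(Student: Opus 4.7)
The strategy is to apply Theorem \ref{galoisinverse}: we need to find a pair $(X, B)$ for which $\Frac(\Os(B)) = k$, verify the condition (CGI), and construct the required infinite families $\Ks_{q^r}$ (for primes $q$ different from the characteristic exponent $p$ of $k$, and $r \geq 1$) and, in positive characteristic, $\Ks_p$. A natural first attempt is to take $X = \E{1}{k}$ and $B$ a suitably chosen compact subset---for instance a single rigid point $x_0 \in X$---selected so that the ring of surconvergent functions $\Os(B)$ has fraction field equal to $k$. The condition (CGI) is then straightforward via the obvious system of geometrically integral affinoid neighborhoods of $x_0$, namely small closed Tate discs.

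The arithmetic heart of the proof is the construction of the families of extensions of $k$. Here the non-triviality of the absolute value on $k$ plays a central role: it provides an element $\pi \in k^\times$ with $0 < |\pi| < 1$, from which one may write Eisenstein-type polynomials yielding infinitely many pairwise non-isomorphic finite totally ramified separable extensions of $k$ of any prescribed degree. For $q \neq p$ and $r \geq 1$, one obtains a primitive $q^r$-th root of unity in a finite separable extension of $k$ via Krasner's and Hensel's lemmas, and then Kummer extensions of the form $k(\zeta_{q^r}, \sqrt[q^r]{u\pi})$ for infinitely many pairwise non-associate $u \in k^\times$ furnish the family $\Ks_{q^r}$. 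In positive characteristic, Artin--Schreier--Witt extensions produce $\Ks_p$ analogously.

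For each $K$ in the constructed family, we exhibit a pair $x \in B$ and $\alpha \in \Os(B)$, null on $B$, such that $K$ and $\Frac(\Os_x)$ are linearly disjoint over $k$ and $S^{q^r} - \alpha$ (resp.\ $S^p - \alpha$) is irreducible over their compositum. We choose $\alpha$ with Eisenstein-type vanishing behaviour at $x$, ensuring irreducibility via a valuation-theoretic argument, while linear disjointness of $K$ from $\Frac(\Os_x)$ is arranged by selecting $K$ with ramification data incompatible with that already visible in $\Frac(\Os_x)$. The main obstacle is the simultaneous enforcement of these three conditions across the entire infinite family $\Ks_{q^r}$; once this arithmetic coordination is in place, Theorem \ref{galoisinverse} immediately yields the desired finite regular Galois extension of $\Frac(\Os(B))(T) = k(T)$.
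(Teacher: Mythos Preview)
Your plan contains a genuine gap at the very first step. You want to choose $B\subset\E{1}{k}$ so that $\Frac(\Os(B))=k$ and then apply Theorem~\ref{galoisinverse} directly to obtain an extension of $k(T)$. But the hypotheses of that theorem require an element $\alpha\in\Os(B)$ which vanishes at every point of~$B$ and for which $S^{q^r}-\alpha$ is irreducible over a suitable compositum. If $\Frac(\Os(B))=k$, then $\Os(B)\subset k$ consists of constants, and a constant vanishing at any point of~$B$ is zero; hence $\alpha=0$ and $S^{q^r}-\alpha=S^{q^r}$ is reducible. So the two requirements are incompatible, and no choice of a single rigid point (or of any compact~$B$) can make $\Frac(\Os(B))=k$ while still allowing a nonzero~$\alpha$ vanishing on~$B$. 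In fact, for any compact~$B$ in~$\E{1}{k}$ satisfying~(CGI), the ring~$\Os(B)$ is strictly larger than~$k$: at a rigid point it is a ring of convergent power series, and at a non-rigid point it is a field properly containing~$k$.

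The paper does something quite different and much simpler. It takes $B=\{0\}$ in~$\E{1}{k}$ (variable~$U$), so that $\Os(B)=\Os_0$ is the ring of power series with positive radius of convergence, and chooses $\alpha=U$ uniformly. Then \emph{any} finite separable extension~$K$ of~$k$ is automatically linearly disjoint from $\Frac(\Os_0)$ over~$k$, and $S^n-U$ is irreducible over $\Frac(\Os_0)\otimes_k K\subset K(\!(U)\!)$ by Eisenstein, for every~$n$. No delicate arithmetic coordination of ramification data is needed; the families $\Ks_{q^r}$ and~$\Ks_p$ are just the obvious infinite sets of cyclotomic (resp.\ arbitrary separable) extensions. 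Theorem~\ref{galoisinverse} then yields a regular Galois extension of $\Frac(\Os_0)(T)$, not of~$k(T)$. The passage to~$k(T)$ is a separate specialization step: since the valuation on~$k$ is non-trivial, every $k$-variety with a $\Frac(\Os_0)$-point has a $k$-point (substitute a small nonzero value for~$U$), and the Bertini--Noether theorem lets one specialize the extension to a regular Galois extension of~$k(T)$ with the same group. This specialization argument is precisely where the non-triviality of the valuation enters, and it is entirely missing from your outline.
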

\begin{proof}
Appliquons le th\'eor\`eme~\ref{galoisinverse} en choisissant pour espace~$X$ la droite analytique~$\E{1}{k}$, dont nous noterons~$U$ la variable, et pour partie~$B$ le point~$0$. Soit~$K$ une extension finie du corps~$k$. Choisissons pour point~$x$ le point~$0$ : l'anneau local~$\Os_{0}$ est l'anneau des s\'eries en une variable \`a coefficients dans~$k$ de rayon de convergence strictement positif. Par cons\'equent, les corps~$K$ et~$\Frac(\Os_{0})$ sont lin\'eairement disjoints sur~$k$. Choisissons pour fonction~$\alpha$ la fonction~$U$ : pour tout entier~$n\ge 1$, le polyn\^ome $S^n-U$ est irr\'eductible sur le corps $\Frac(\Os_{0})\otimes_{k} K$, qui est un sous-corps de~$K(\!(U)\!)$, par le th\'eor\`eme d'Eisenstein. Les hypoth\`eses du th\'eor\`eme~\ref{galoisinverse} sont donc satisfaites.

Soit~$G$ un groupe fini. Il existe une extension finie et r\'eguli\`ere du corps $\Frac(\Os(B))(T)=\Frac(\Os_{0})(T)$ qui est galoisienne de groupe~$G$. Puisque l'anneau local~$\Os_{0}$ est compos\'e de s\'eries convergentes et que le corps~$k$ n'est pas trivialement valu\'e, toute vari\'et\'e qui poss\`ede un point sur $\Frac(\Os_{0})$ en poss\`ede un sur~$k$. En utilisant le th\'eor\`eme de Bertini-Noether (\emph{cf.}~\cite{Fried-Jarden}, proposition~10.4.2), on montre alors que l'on peut sp\'ecialiser l'extension pr\'ec\'edente en une extension finie et r\'eguli\`ere du corps $k(T)$ qui est galoisienne de groupe~$G$.
\end{proof}

Rappelons qu'un corps~$k$ est dit fertile\footnote{Nous empruntons ce terme \`a L.~Moret-Bailly (\emph{cf.}~\cite{courbespointees}). Les corps fertiles sont connus sous beaucoup d'autres noms. Ils ont \'et\'e introduits par F.~Pop dans~\cite{largefields} sous l'appellation de \og large fields \fg.} si tout $k$-sch\'ema de type fini qui poss\`ede un point sur~$k(\!(U)\!)$ en poss\`ede un sur~$k$ (l'on peut d\'emontrer que cela \'equivaut \`a demander que toute $k$-courbe lisse qui poss\`ede un point sur~$k$ en poss\`ede une infinit\'e). F.~Pop a d\'emontr\'e que, si~$k$ est un corps fertile, tout groupe fini est groupe de Galois d'une extension finie et r\'eguli\`ere du corps~$k(T)$ (\emph{cf.}~\cite{largefields}, main theorem~A).

\begin{cor}\label{corfertile}
Soit~$k$ un corps. Alors, tout groupe fini est groupe de Galois d'une extension finie et r\'eguli\`ere du corps $k(\!(U)\!)(T)$. En particulier, si le corps~$k$ est fertile ou contient un corps fertile, tout groupe fini est groupe de Galois d'une extension finie et r\'eguli\`ere du corps $k(T)$. 
\end{cor}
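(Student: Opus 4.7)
The plan is to reduce both assertions to Corollary~\ref{corcomplet}. For the first statement, observe that~$k(\!(U)\!)$ is complete with respect to the $U$-adic absolute value, which is ultrametric and non-trivial. Applying Corollary~\ref{corcomplet} with~$k$ replaced by~$k(\!(U)\!)$ immediately produces, for every finite group~$G$, a finite regular Galois extension of~$k(\!(U)\!)(T)$ of group~$G$.

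For the second statement, suppose first that~$k$ itself is fertile and let~$G$ be a finite group. Starting from a finite regular Galois extension $L/k(\!(U)\!)(T)$ of group~$G$ furnished by the first assertion, I would mimic the specialization argument from the end of the proof of Corollary~\ref{corcomplet}. The theorem of Bertini-Noether (\cite{Fried-Jarden}, proposition~10.4.2) provides a non-empty $k$-variety~$V$, open in some affine space, such that the specialization at any $k$-rational point of~$V$ defines a finite regular Galois extension of~$k(T)$ of group~$G$. The original extension~$L$ corresponds to a $k(\!(U)\!)$-rational point of~$V$, and the fertility of~$k$ supplies a $k$-rational point, whose associated specialization is the desired extension.

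Finally, assume only that~$k$ contains a fertile subfield~$k_{0}$. Apply the preceding case to~$k_{0}$ to obtain a finite regular Galois extension $L_{0}/k_{0}(T)$ of group~$G$, and then base-change to~$k(T)$. The one subtlety here is the preservation of regularity and of the Galois group under this base change: because~$k_{0}$ is algebraically closed in~$L_{0}$ and~$L_{0}/k_{0}(T)$ is separable, $L_{0}$ and~$k(T)$ are linearly disjoint over~$k_{0}(T)$, so $L_{0}\otimes_{k_{0}(T)}k(T)$ is a field carrying a faithful transitive action of~$G$, in which~$k$ remains algebraically closed. This is the classical behaviour of regular extensions under base change and requires no further geometric input. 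All the genuinely non-trivial work therefore sits inside Corollary~\ref{corcomplet} and in the Bertini-Noether specialization already used to prove it; the present corollary is essentially a matter of choosing the right inputs.
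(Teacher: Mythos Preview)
Your proof is correct, and for the second assertion (fertile case and base change to a larger field) it matches the paper's argument essentially verbatim: Bertini--Noether to specialize from~$k(\!(U)\!)$ to~$k$ via fertility, then tensor up using regularity.

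For the first assertion you take a different entry point. The paper equips~$k$ with the \emph{trivial} valuation, takes $X=\E{1}{k}$ and $B=\{0\}$, observes that $\Os(B)=\Os_{0}=k[\![U]\!]$ in this setting, and applies Theorem~\ref{galoisinverse} directly to conclude over $\Frac(k[\![U]\!])(T)=k(\!(U)\!)(T)$. You instead note that~$k(\!(U)\!)$ is already a complete non-trivially valued ultrametric field and invoke Corollary~\ref{corcomplet}. Both routes are valid and rest on the same underlying machinery (Theorem~\ref{galoisinverse}); yours is arguably the more economical packaging, while the paper's version makes explicit that the trivially valued case of the general theorem is being exercised, which fits its narrative of illustrating the method across different valuations.
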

\begin{proof}
Munissons le corps~$k$ de la valuation triviale et consid\'erons la m\^eme situation que dans la preuve pr\'ec\'edente. Nous avons alors $\Os(B)=\Os_{0}=k[\![U]\!]$ et le th\'eor\`eme~\ref{galoisinverse} fournit le r\'esultat annonc\'e. 

Lorsque le corps~$k$ est fertile, le th\'eor\`eme de Bertini-Noether permet de sp\'ecialiser l'extension pr\'ec\'edente en une extension de~$k(T)$ poss\'edant les m\^emes propri\'et\'es. La r\'egularit\'e de l'extension de~$k(T)$ permet d'obtenir, par produit tensoriel, pour tout corps~$L$ contenant~$k$, une extension de~$L(T)$ poss\'edant encore les propri\'et\'es requises.
\end{proof}

\begin{rem}
Le r\'esultat du second corollaire contient le r\'esultat du premier, puisque tout corps complet pour une valeur absolue ultram\'etrique non triviale est fertile. Cependant, la preuve de ce dernier \'enonc\'e \'etant assez difficile (on peut, par exemple, le d\'emontrer en utilisant l'approximation d'Artin), nous avons choisi de proposer une preuve directe du corollaire~\ref{corcomplet}.  
\end{rem}

\subsection{Cas de la valuation triviale}

Lorsque le corps~$k$ est trivialement valu\'e, nous pouvons obtenir des r\'esultats plus g\'en\'eraux. Nous indiquons simplement ici les modifications \`a apporter au raisonnement qui pr\'ec\`ede.

Supposons que le corps~$k$ est muni de la valeur absolue triviale. Pour tout $n\in\N$, nous pouvons alors d\'efinir une application, appel\'ee flot (\emph{cf.}~\cite{monjolimemoire}, 1.3), de $\E{n}{k}\times\R_{+}^*$ dans~$\E{n}{k}$ de la fa\c{c}on suivante. Soient~$x$ un point de~$\E{n}{k}$ --~il est associ\'e \`a une semi-norme multiplicative~$|.|_{x}$ sur $k[T_{1},\ldots,T_{n}]$ qui induit la valeur absolue triviale sur~$k$~-- et~$\eps$ un nombre r\'eel strictement positif.  L'image du couple $(x,\eps)$ est le point de~$\E{n}{k}$ associ\'ee \`a la semi-norme multiplicative~$|.|_{x}^\eps$. Par restriction \`a la source et au but, nous pouvons encore d\'efinir le flot sur tout ferm\'e de Zariski d'un espace affine analytique. Signalons qu'une fonction d\'efinie au voisinage d'un point se prolonge, et ce de fa\c{c}on unique, \`a un voisinage de sa trajectoire sous le flot (\emph{ibid.}, proposition 1.3.10). 

Nous consid\'ererons d\'esormais un espace analytique~$X$ qui est un ferm\'e de Zariski d'un espace affine analytique et une partie ouverte~$B$ de~$X$. Nous d\'efinissons comme pr\'ec\'edemment~$Y$, $\pi$ et~$\lambda$.

\bigskip

Expliquons comment adapter les constructions locales du num\'ero~\ref{constructionlocaleberko}. Comme alors, choisissons une extension finie et s\'eparable~$K$ de~$k$. Soit $P\in k[T]$ le polyn\^ome minimal unitaire d'un \'el\'ement primitif de cette extension et notons~$t$ le point de~$\P^{1,\an}_{k}$ d\'efini par l'annulation de ce polyn\^ome. 

Reprenons, \`a pr\'esent, le raisonnement du num\'ero~\ref{Kummer}. \`A cet effet, choisissons un entier~$n$ sup\'erieur ou \'egal \`a~$2$ et premier \`a~$p$ et supposons que le corps~$K$ contient une racine primitive~$n\eme$ de l'unit\'e. Par hens\'elianit\'e, elle se rel\`eve, dans l'anneau local au point~$t$, en une racine primitive $n\eme$ de l'unit\'e, que nous noterons~$\zeta$. Les propri\'et\'es du flot assurent qu'elle est d\'efinie sur l'ouvert  
$$\left\{\left.y\in\P^{1,\an}_{k}\,\right|\, |P(y)|<1\right\}.$$
Soit~$\alpha$ un \'el\'ement de~$\Os(B)$. Insistons sur le fait que nous ne supposons plus qu'il soit nul en tout point de~$B$. Nous d\'efinissons alors un faisceau~$\Fs$, comme pr\'ec\'edemment, au-dessus de l'ouvert
$$V_{t}(B) = \left\{y\in Y(B)\, \big|\, |P(y)|<1\right\}.$$
Puisque le corps~$k$ est trivialement valu\'e, le rayon de convergence de la s\'erie $(1+T)^{1/n}$ est \'egal \`a~$1$ et le rev\^etement associ\'e \`a~$\Fs$ est trivial au-dessus de l'ouvert
$$V_{t}'(B) = \left\{y\in V(B)\, \big|\, |\alpha(y)| < |P(y)|^n\right\}.$$
Supposons, en outre, que l'\'el\'ement~$\alpha$ est de valeur absolue strictement inf\'erieure \`a~$1$ en tout point de~$B$. Alors, le compl\'ementaire de la partie~$V_{t}'(B)$ dans~$V_{t}(B)$ est ferm\'e dans~$Y(B)$.

Pour assurer l'irr\'eductibilit\'e du rev\^etement associ\'e au faisceau~$\Fs$, nous rempla\c{c}ons la condition~(CGI) par la condition suivante : l'ouvert~$B$ est limite inductive d'espaces affino\"{\i}des g\'eom\'etriquement int\`egres. Le r\'esultat du lemme~\ref{lemintlocalberko} vaut alors encore.

\bigskip

Passons aux r\'esultats du num\'ero~\ref{Artin-Schreier-Witt}. Supposons donc que $p$ est un nombre premier et consid\'erons un entier~$n$ de la forme~$p^r$, avec $r\in\N^*$. Soit~$\alpha$ un \'el\'ement de~$\Os(B)$ et posons, de nouveau,
$$V_{t}(B) = \left\{y\in Y(B)\, \big|\, |P(y)|<1\right\}.$$
Les propri\'et\'es du flot permettent de pr\'eciser le domaine de d\'efinition des racines du polyn\^ome~$P_{0}(S_{0})$. Notons~$B_{0}$ le lieu d'annulation de~$\alpha$ dans~$B$. Supposons qu'il ne soit pas vide et que pour tout point~$b$ de~$B$ v\'erifiant $|\alpha(b)|<1$ et tout voisinage~$B_{+}$ de~$B_{0}$, le flot joigne le point~$b$ \`a un point de~$B_{+}$ (c'est en particulier le cas d\`es que la partie~$B$ est stable par le flot, que l'\'el\'ement~$\alpha$ est de valeur absolue strictement inf\'erieure \`a~$1$ en tout point de~$B$ et s'annule sur~$B$). Posons
$$V'_{t}(B) = \left\{y\in V(B)\, \big|\, |\alpha(y)|<1 \textrm{ et } |P(y)| < 1\right\}.$$
Les propri\'et\'es du flot assurent que le rev\^etement associ\'e \`a~$\Fs$ est encore trivial sur une partie~$V'_{t}(B)$ de~$V_{t}(B)$ dont le compl\'ementaire dans~$V_{t}(B)$ est ferm\'e. Si l'\'el\'ement~$\alpha$ est de valeur absolue strictement inf\'erieure \`a~$1$ en tout point de~$B$, nous pouvons m\^eme choisir la partie~$V'_{t}(B)$ de fa\c{c}on que son compl\'ementaire dans~$V_{t}(B)$ soit ferm\'e dans~$Y(B)$.

\bigskip

Dans la preuve du th\'eor\`eme~\ref{galoisberko}, il faut finalement remplacer le r\'esultat de type GAGA du corollaire~\ref{GAGAvoiscompact} par celui du corollaire~\ref{GAGAsurouvert}.

Nous obtenons finalement le r\'esultat suivant :

\begin{thm}\label{galoisinversetriv}
Soient~$k$ un corps. Munissons-le de la valeur absolue triviale. Soient~$X$ un espace de Zariski d'un espace affine $k$-analytique et~$B$ une partie ouverte de~$X$ stable par le flot qui soit limite inductive d'espaces affino\"{\i}des g\'eom\'etriquement int\`egres. Supposons que pour tout nombre premier~$q$ diff\'erent de la caract\'eristique du corps~$k$ et tout entier $r\in\N^*$, il existe une famille infinie~$\Ks_{q^r}$ de corps deux \`a deux non isomorphes satisfaisant les propri\'et\'es suivantes :
\begin{enumerate}[\it i)]
\item tout \'el\'ement de~$\Ks_{q^r}$ est une extension finie et s\'eparable de~$k$ contenant une racine primitive $(q^r)\eme$ de l'unit\'e ;
\item pour tout \'el\'ement~$K$ de~$\Ks_{q^r}$, il existe un \'el\'ement~$x$ de~$B$ et un \'el\'ement~$\alpha$ de~$\Os(B)$ de valeur absolue strictement inf\'erieure \`a~$1$ en tout point de~$B$ et qui s'annule sur~$B$ tels que les corps~$K$ et~$\Frac(\Os_{x})$ soient lin\'eairement disjoints et le polyn\^ome $S^{q^r}-\alpha$ soit irr\'eductible sur leur compositum.
\end{enumerate} 
Si la caract\'eristique du corps~$k$ est un nombre premier~$p$, supposons en outre qu'il existe une famille infinie~$\Ks_{p}$ de corps deux \`a deux non isomorphes satisfaisant les propri\'et\'es suivantes :
\begin{enumerate}[\it i)]
\item tout \'el\'ement de~$\Ks_{p}$ est une extension finie et s\'eparable de~$k$ ;
\item pour tout \'el\'ement~$K$ de~$\Ks_{p}$, il existe un \'el\'ement~$x$ de~$B$  et un \'el\'ement~$\alpha$ de~$\Os(B)$ de valeur absolue strictement inf\'erieure \`a~$1$ en tout point de~$B$ et qui s'annule sur~$B$ tels que les corps~$K$ et~$\Frac(\Os_{x})$ soient lin\'eairement disjoints et le polyn\^ome $S^{p}-\alpha$ soit irr\'eductible sur leur compositum.
\end{enumerate} 

Alors, tout groupe fini est groupe de Galois d'une extension finie et r\'eguli\`ere du corps $\Frac(\Os(B))(T)$. 
\end{thm}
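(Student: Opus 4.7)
Ma strat\'egie consiste \`a reproduire, mutatis mutandis, le sch\'ema de preuve qui m\`ene au th\'eor\`eme~\ref{galoisinverse}, en rempla\c{c}ant les objets compacts par leurs analogues ouverts et en invoquant \`a chaque \'etape les adaptations qui viennent d'\^etre d\'ecrites dans la sous-section. Je fixerai un groupe fini~$G$ et un syst\`eme de g\'en\'erateurs $g_{1},\ldots,g_{t}$ tels que chaque $\la g_{i} \ra$ soit cyclique d'ordre une puissance d'un nombre premier~$p_{i}$, r\'epartis selon~$p$ comme au num\'ero~\ref{numerorecollementberko}.

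Pour chaque~$i$, j'extrairai des hypoth\`eses une extension s\'eparable~$K_{i}$ de~$k$ (contenant les racines de l'unit\'e requises dans le cas mod\'er\'e) et un \'el\'ement~$\alpha_{i}\in\Os(B)$, de valeur absolue strictement inf\'erieure \`a~$1$ en tout point de~$B$ et s'annulant sur~$B$, v\'erifiant la condition d'irr\'eductibilit\'e. L'infinitude des familles~$\Ks_{q^r}$ et~$\Ks_{p}$ me permettra de supposer les~$K_{i}$ deux \`a deux non isomorphes, de sorte que les points associ\'es $t_{i}\in\P^{1,\an}_{k}$ soient distincts. J'appliquerai alors les variantes trivialement valu\'ees de la th\'eorie de Kummer (num\'ero~\ref{Kummer}) et d'Artin-Schreier-Witt (num\'ero~\ref{Artin-Schreier-Witt}) au-dessus des ouverts $V_{t_{i}}(B) = Y(B) \cap \lambda^{-1}(\{|P_{i}|<1\})$ pour obtenir des rev\^etements cycliques d'ordre~$p_{i}^{r_{i}}$, triviaux au-dessus de sous-ouverts~$V'_{t_{i}}(B)$. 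C'est pr\'ecis\'ement la conjonction de la stabilit\'e de~$B$ par le flot et de la petitesse des~$\alpha_{i}$ qui garantira que le compl\'ementaire de~$V'_{t_{i}}(B)$ dans~$V_{t_{i}}(B)$ est ferm\'e dans~$Y(B)$, condition indispensable au raccord.

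Je proc\'ederai ensuite au raccord comme au num\'ero~\ref{numerorecollementberko} : induction \`a~$G$ de chaque rev\^etement cyclique, les~$V_{t_{i}}(B)$ \'etant deux \`a deux disjoints quitte \`a les r\'etr\'ecir, puis recollement selon les relations de~$G$ avec le rev\^etement trivial $\textrm{Ind}_{\la e \ra}^G$ au-dessus du compl\'ementaire de $\bigcup_{i} (V_{t_{i}}(B)\setminus V'_{t_{i}}(B))$ dans~$Y(B)$. L'int\'egrit\'e du rev\^etement obtenu r\'esultera de l'analogue du lemme~\ref{lemintlocalberko}, dont la validit\'e est assur\'ee par la nouvelle condition selon laquelle~$B$ est limite inductive d'espaces affino\"{\i}des g\'eom\'etriquement int\`egres --~condition qui remplace pr\'ecis\'ement~(CGI) dans le cadre ouvert.

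Le point d\'elicat sera l'\'etape finale d'alg\'ebrisation. Puisque~$B$ n'est plus compact, le th\'eor\`eme GAGA du corollaire~\ref{GAGAvoiscompact} ne s'applique plus tel quel ; je lui substituerai sa version adapt\'ee aux ouverts, le corollaire~\ref{GAGAsurouvert}, qui exploite le caract\`ere de type Stein de la situation pour garantir l'alg\'ebricit\'e du rev\^etement construit. Le passage aux corps de fonctions m\'eromorphes fournira alors une extension finie et galoisienne de groupe~$G$ du corps $\Frac(\Os(B))(T)$, et le caract\`ere purement g\'eom\'etrique de la construction en assurera automatiquement la r\'egularit\'e.
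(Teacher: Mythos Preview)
Your proposal is correct and follows precisely the route the paper takes: one reproduces the scheme leading to th\'eor\`eme~\ref{galoisinverse}, using the flow to extend the local Kummer and Artin--Schreier--Witt constructions over the open sets $V_{t_{i}}(B)=\{|P_{i}|<1\}$, replacing condition~(CGI) by the hypothesis that~$B$ is an inductive limit of geometrically integral affinoids, and invoking corollaire~\ref{GAGAsurouvert} in place of corollaire~\ref{GAGAvoiscompact} for the algebraization. Two harmless imprecisions: the $V_{t_{i}}(B)$ are automatically pairwise disjoint once the~$K_{i}$ are non-isomorphic (no shrinking is needed, since for the trivial valuation the sets $\{|P_{i}|<1\}$ in~$\P^{1,\an}_{k}$ are the disjoint branches at distinct closed points), and corollaire~\ref{GAGAsurouvert} rests on the inductive-limit-of-affinoids structure rather than on a Stein property.
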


\begin{rem}
De nouveau, le r\'esultat de ce th\'eor\`eme est connu, puisque le corps $\Frac(\Os(B))$ contient un corps de s\'eries de Laurent sur~$k$ (engendr\'e par l'un des \'el\'ements~$\alpha$). 
\end{rem}

\begin{cor}\label{corvaltriv}
Soit~$k$ un corps. Notons~$k_{+\infty,1^-}[\![U_{1},U_{2}]\!]$ le sous-anneau de $k[U_{1}][\![U_{2}]\!]$ compos\'e des s\'eries de la forme 
$$\sum_{n\ge 0} a_{n}(U_{1})U_{2}^n$$
qui v\'erifient la condition
$$\forall r>0,\forall s\in\of{[}{0,1}{[},\ \lim_{n\to +\infty}  (r^{\deg(a_{n})}\, s^n) = 0.$$
Alors, tout groupe fini est groupe de Galois d'une extension finie et r\'eguli\`ere du corps $\Frac(k_{+\infty,1^-}[\![U_{1},U_{2}]\!])(T)$. 
\end{cor}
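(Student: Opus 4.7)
Le plan est d'appliquer le th\'eor\`eme~\ref{galoisinversetriv} au corps~$k$ muni de la valeur absolue triviale, avec $X = \E{2}{k}$ de coordonn\'ees $U_{1}, U_{2}$ et
$$B = \{x \in \E{2}{k} \,\big|\, |U_{2}(x)| < 1\}.$$
Cet ouvert est manifestement stable par le flot, et il se pr\'esente comme r\'eunion croissante des affino\"ides
$$B_{r, s} = \{x \in \E{2}{k} \,\big|\, |U_{1}(x)| \le r,\, |U_{2}(x)| \le s\}, \quad r \ge 1,\ s \in \of{[}{0, 1}{[},$$
qui sont g\'eom\'etriquement int\`egres puisque leurs alg\`ebres de Tate $k\{r^{-1}U_{1}, s^{-1}U_{2}\}$ restent int\`egres apr\`es toute extension des scalaires. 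L'identification $\Os(B) = k_{+\infty, 1^{-}}[\![U_{1}, U_{2}]\!]$ r\'esulte du calcul de la norme de Gauss sous la valuation triviale : un polyn\^ome $a \in k[U_{1}]$ a pour norme $r^{\deg a}$ sur $\{|U_{1}| \le r\}$ d\`es que $r \ge 1$, de sorte qu'une s\'erie $\sum a_{n}(U_{1}) U_{2}^{n}$ (les~$a_{n}$ \'etant n\'ecessairement polynomiaux, puisque la condition est requise pour des~$r$ arbitrairement grands) appartient \`a toute alg\`ebre $\Os(B_{r, s})$ si et seulement si $r^{\deg a_{n}} s^{n} \to 0$ pour tous $r > 0$ et $s \in \of{[}{0, 1}{[}$.

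On choisit $\alpha = U_{2}$, qui est de valeur absolue strictement inf\'erieure \`a~$1$ en tout point de~$B$ et s'annule sur $B \cap \{U_{2} = 0\}$. Les familles~$\Ks_{q^{r}}$ d'extensions finies s\'eparables de~$k$ non isomorphes deux \`a deux contenant une racine primitive $(q^{r})\eme$ de l'unit\'e (et, en caract\'eristique positive, la famille~$\Ks_{p}$ d'extensions finies s\'eparables sans condition suppl\'ementaire) s'obtiennent sans difficult\'e comme corps de d\'ecomposition de polyn\^omes appropri\'es d\`es que~$k$ n'est pas s\'eparablement clos ; c'est l\`a le seul cas \`a traiter, puisqu'autrement la conclusion d\'ecoule d\'ej\`a de la th\'eorie classique appliqu\'ee au sous-corps $\bar k(U_{1}, U_{2})$ de~$\Frac(\Os(B))$.

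Pour chaque telle extension~$K$, on prend pour~$x$ le point de~$B$ associ\'e \`a la valuation $|f|_{x} = \rho^{v_{U_{2}}(f)}$, o\`u~$\rho$ est un \'el\'ement fix\'e de~$\of{]}{0, 1}{[}$ et~$v_{U_{2}}$ d\'esigne la valuation $U_{2}$-adique sur $k(U_{1})[U_{2}]$ prolongeant la valuation triviale sur~$k(U_{1})$. Le corps~$k$ est alg\'ebriquement clos dans~$\Frac(\Os_{x})$ (ce que l'on constate, par exemple, en plongeant ce dernier dans $k(U_{1})(\!(U_{2})\!)$ et en observant qu'un \'el\'ement de cette compl\'etion alg\'ebrique sur~$k$ est n\'ecessairement dans~$k$), ce qui assure la disjonction lin\'eaire de~$K$ et~$\Frac(\Os_{x})$ sur~$k$. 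L'extension $K \cdot \Frac(\Os_{x}) / \Frac(\Os_{x})$ \'etant alors non ramifi\'ee (son corps r\'esiduel est $K(U_{1})$, qui est de degr\'e $[K : k]$ sur~$k(U_{1})$), l'\'el\'ement~$U_{2}$ y reste une uniformisante, et le crit\`ere d'Eisenstein fournit l'irr\'eductibilit\'e de $S^{q^{r}} - U_{2}$ (resp. $S^{p} - U_{2}$) sur le compositum.

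Les hypoth\`eses du th\'eor\`eme~\ref{galoisinversetriv} \'etant ainsi r\'eunies, le r\'esultat en d\'ecoule. L'\'etape la plus d\'elicate est l'identification $\Os(B) = k_{+\infty, 1^{-}}[\![U_{1}, U_{2}]\!]$, qui repose sur un examen soigneux des normes de Gauss dans les alg\`ebres de Tate sous la valuation triviale ; le reste repose sur des arguments valuatifs \'el\'ementaires dans des anneaux locaux bien choisis.
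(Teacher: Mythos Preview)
Your proof follows the same overall plan as the paper's: apply the trivially-valued version of the main theorem with $X=\E{2}{k}$, $B=\{|U_{2}|<1\}$, and $\alpha=U_{2}$, then identify $\Os(B)$ with the ring in the statement. (The paper in fact cites th\'eor\`eme~\ref{galoisinverse} here, which appears to be a slip for~\ref{galoisinversetriv}; you cite the correct one.)

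The one substantive difference is the choice of the test point~$x$. You take a Gauss-type point associated to the $U_{2}$-adic valuation on $k(U_{1})[U_{2}]$, and then argue via the residue field $K(U_{1})$ and Eisenstein that $S^{n}-U_{2}$ is irreducible over the compositum. This works, but it forces you to say something nontrivial about the local ring $\Os_{x}$ at that point and about the extension $K\cdot\Frac(\Os_{x})/\Frac(\Os_{x})$ being unramified. The paper instead takes $x=(0,0)$, for which one knows directly that $\Os_{x}\simeq k[\![U_{1},U_{2}]\!]$; linear disjointness of $K$ and $\Frac(\Os_{x})$ over~$k$ is then immediate, the compositum is $\Frac(K[\![U_{1},U_{2}]\!])$, and the irreducibility of $S^{n}-U_{2}$ follows at once. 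This choice is shorter and avoids any discussion of local rings at non-rigid points.

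Two minor remarks on your write-up. First, your handling of the case where $k$ is separably closed invokes ``$\bar{k}(U_{1},U_{2})$'' as a subfield of $\Frac(\Os(B))$, but in positive characteristic $\bar{k}$ need not equal $k$; the clean way to dispose of this case is to note, as the paper does just after the corollary, that $\Frac(\Os(B))$ contains $k(\!(U_{2})\!)$ and appeal to corollaire~\ref{corfertile}. Second, you supply more justification than the paper does for $B$ being a union of geometrically integral affino\"ides and for the description of $\Os(B)$; this is welcome, as the paper states these without argument.
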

\begin{proof}
Appliquons le th\'eor\`eme~\ref{galoisinverse} en choisissant pour espace~$X$ l'espace analytique de dimension deux~$\E{2}{k}$, dont nous noterons~$U_{1}$ et~$U_{2}$ les variables, et pour partie~$B$ le disque ouvert relatif de rayon~$1$ au-dessus de~$\E{1}{k}$ :
$$B = \left\{\left.x\in\E{2}{k}\,\right|\, |U_{2}(x)|<1\right\}.$$
Cette partie est stable par le flot et nous avons $\Os(B) = k_{+\infty,1^-}[\![U_{1},U_{2}]\!]$.

Soit~$K$ une extension finie du corps~$k$. Choisissons pour point~$x$ le point de coordonn\'ees~$(0,0)$ : l'anneau local~$\Os_{x}$ est isomorphe \`a~$k[\![U_{1},U_{2}]\!]$. Par cons\'equent, les corps~$K$ et~$\Frac(\Os_{x})$ sont lin\'eairement disjoints sur~$k$. Choisissons pour fonction~$\alpha$ la fonction~$U_{2}$ : pour tout entier~$n\ge 1$, le polyn\^ome $S^n-U_{2}$ est irr\'eductible sur le corps $\Frac(\Os_{x})\otimes_{k} K \simeq K[\![U_{1},U_{2}]\!]$. Les hypoth\`eses du th\'eor\`eme~\ref{galoisinverse} sont donc satisfaites. On en d\'eduit le r\'esultat attendu.
\end{proof}

Ce dernier \'enonc\'e peut surprendre, puisqu'il d\'ecoule directement du corollaire~\ref{corfertile}. En effet, le corps $\Frac(k_{r^-,1^-}[\![U_{1},U_{2}]\!])$ contient le corps $k(\!(U_{2})\!)$. Il pr\'esente cependant un int\'er\^et dans le cadre de l'analogie entre corps de fonctions et corps de nombres. La droite analytique sur un corps trivialement valu\'e est proche, \`a bien des \'egards, du spectre analytique --~espace analytique de dimension~$0$~-- d'un anneau d'entiers de corps de nombres (\emph{cf.} annexe~\ref{sectionberko}). Il semble donc raisonnable d'envisager que le r\'esultat du corollaire pr\'ec\'edent reste vrai en rempla\c{c}ant l'anneau $k_{r^-,1^-}[\![U_{1},U_{2}]\!]$ par l'anneau du disque ouvert de rayon~$1$ au-dessus du spectre d'un anneau d'entiers de corps de nombres. Signalons que les constructions effectu\'ees peuvent effectivement \^etre men\'ees dans ce cadre. Pour conclure, seuls manquent encore les th\'eor\`emes du type GAGA sur les espaces de Berkovich au-dessus des anneaux d'entiers de corps de nombres.

\begin{conj}
Soient~$K$ un corps de nombres, $A$ l'anneau de ses entiers et~$\Sigma_{\infty}$ l'ensemble des plongements de~$K$ dans~$\C$. Notons $A_{1^-}[\![X]\!]$ le sous-anneau de $A[\![X]\!]$ compos\'e des s\'eries~$f$ telles que, pour tout $\sigma\in\Sigma_{\infty}$, la s\'erie \`a coefficients complexes~$\sigma(f)$ a un rayon de convergence sup\'erieur ou \'egal \`a~$1$. Alors, tout groupe fini est groupe de Galois d'une extension finie et r\'eguli\`ere du corps $\Frac(A_{1^-}[\![X]\!])(T)$. 
\end{conj}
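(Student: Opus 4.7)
The plan is to mimic the proof of Corollary~\ref{corvaltriv}, transposing it from the Berkovich disk over a trivially valued field to the Berkovich disk over the ring of integers $A$. Let $\D$ denote the relative open unit disk of radius~$1$ above the Berkovich spectrum~$\Mc(A)$, that is,
$$\D = \left\{x\in \E{1}{A}\,\big|\, |X(x)|<1\right\},$$
where~$X$ is the coordinate on the affine line. By the very definition of $A_{1^-}[\![X]\!]$, the ring of global sections~$\Os(\D)$ coincides with $A_{1^-}[\![X]\!]$. The annex on Berkovich spaces over number rings supplies the spectral theory of this space; in particular, $\D$ is a Stein-like domain that should play here the role played by~$B$ in Theorem~\ref{galoisinverse}.

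First, I would run the local construction of cyclic coverings exactly as in~\S\ref{Kummer}, taking $B=\D$ and choosing once and for all $\alpha=X$. Since~$K$ has characteristic zero, no Artin-Schreier-Witt step is needed: only Kummer extensions are required. For each prime power $q^r$ appearing in a decomposition of the target group~$G$, I would select an infinite family~$\Ks_{q^r}$ of pairwise non-isomorphic finite separable extensions of~$K$ containing a primitive $(q^r)\eme$ root of unity (which exist by class field theory or by elementary cyclotomic arguments). For every such~$L\in\Ks_{q^r}$, I would take for~$x$ a point of~$\D$ sitting above the ``central'' point of~$\Mc(A)$ at the origin~$X=0$: the corresponding local ring is contained in a ring of convergent power series with coefficients in a field containing~$K$, so~$L$ and $\Frac(\Os_{x})$ are linearly disjoint over~$K$, and Eisenstein's criterion with respect to~$X$ shows that $S^{q^r}-X$ is irreducible over their compositum. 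The element~$\alpha=X$ has absolute value strictly less than~$1$ at every point of~$\D$ and vanishes identically on the closed subset $\{X=0\}$, as required by the analogue of the hypotheses of Theorem~\ref{galoisinversetriv}.

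Next, I would carry out the patching of the induced $G$-coverings on pairwise disjoint cylindrical domains $V_{i}$ over~$\D$, by the very procedure of~\S\ref{numerorecollementberko}: because the fields $K_{i}$ are pairwise non-isomorphic, the loci $V_{i}$ can be made disjoint, and gluing via the right coset bijections yields an analytic $G$-covering~$Y\to\pi^{-1}(\D)\subset\E{1}{\D}$ whose integrity follows from the analogue of Lemma~\ref{lemintlocalberko}. Passing to meromorphic functions produces a Galois extension of $\Frac(\Os(\D))(T)=\Frac(A_{1^-}[\![X]\!])(T)$ with group~$G$, and the purely geometric origin of~$Y$ makes the extension regular.

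The main obstacle is precisely the one flagged by the author just before the statement: to conclude that the analytic cover~$Y$ is in fact algebraic, one needs a GAGA-type theorem for relative curves over a Stein open subset of the Berkovich line above an $S$-arithmetic base~$\Mc(A)$, in the same spirit as Corollary~\ref{GAGAsurouvert} but over a ring of integers rather than over a field. Once this GAGA statement is available, nothing else in the argument has to change: the choice of points on the central fiber reduces the arithmetic entirely to the characteristic-zero Kummer setting, and all local irreducibility and triviality-at-infinity statements transfer verbatim. Everything else --- Bertini-Noether-type specializations if one wishes to descend further, or the identification of~$\Os(\D)$ with~$A_{1^-}[\![X]\!]$ --- is routine once the GAGA step is in place.
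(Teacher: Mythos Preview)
This statement is labelled a \emph{conjecture} in the paper and is not proved there. Just before stating it, the author observes that the local constructions of \S\ref{constructionlocaleberko} can indeed be carried out over~$\Mc(A)$, and that the only missing ingredient is a GAGA-type theorem for Berkovich spaces over rings of integers of number fields. Your proposal reproduces exactly this assessment: you outline the patching argument and then flag the missing GAGA step as the obstacle. In that sense your proposal and the paper are in complete agreement --- but neither constitutes a proof.

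One point of detail on your sketch: you propose to transpose \S\ref{Kummer} verbatim, choosing finite extensions~$L$ of the number field~$K$ containing the needed roots of unity. That section is written for $k$-analytic spaces with~$k$ a complete valued field, so objects such as $Y=X\times_{k}\P^{1,\an}_{k}$ must be reinterpreted when the base is~$\Mc(A)$ rather than a point. In \S\ref{sectiondemo} the paper produces roots of unity over~$A$ by a different device --- selecting rational primes $p\equiv 1\pmod{n}$ so that the branch of~$\Mc(A)$ at~$\m\ni p$ already carries a primitive $n^{\textrm{th}}$ root --- and for the conjecture one would presumably blend that idea with the relative-$\P^{1}$ framework of \S\ref{sectiondemoberko}. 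These are secondary matters; the decisive gap, as both you and the author identify, is the absence of the relative GAGA theorem over~$\Mc(A)$.
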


\begin{rem}
Nous ignorons si le corps $\Frac(A_{1^-}[\![X]\!])$ est fertile, m\^eme lorsque l'anneau~$A$ est l'anneau~$\Z$ des entiers. 
\end{rem}

\section{Probl\`eme inverse de Galois sur un disque relatif}\label{sectiondemo}

Soit~$A$ un anneau d'entiers de corps de nombres. Nous allons maintenant appliquer la strat\'egie de raccord d\'ecrite \`a la section~\ref{sectioncomplexe} dans le cadre des espaces de Berkovich sur~$A$ (\emph{cf.} annexe~\ref{sectionberko}). Plus pr\'ecis\'ement, un groupe fini~$G$ \'etant donn\'e, nous allons construire un rev\^etement galoisien de groupe~$G$ du disque
$$\D=\left\{\left. x\in\E{1}{A}\, \right|\, |T(x)|<1\right\}.$$
Cela indique que la troisi\`eme \'etape de notre d\'emonstration diff\'erera fondamentalement de la troisi\`eme \'etape de la d\'emonstration du cas complexe. En effet, le disque~$\D$ est un espace affine et non plus projectif comme l'\'etait~$\P^1(\C)$. En particulier, les th\'eor\`emes GAGA n'y sont pas valables. Nous utiliserons, pour les remplacer, le caract\`ere Stein du disque~$\D$.


Nous noterons~$X=\E{1}{A}$ et, pour tout id\'eal maximal~$\m$ de~$A$,
$$\D_{\m} = \D \cap \pi^{-1}(\of{[}{a_{0},\tilde{a}_{\m}}{]})
\textrm{ et } \D'_{\m} = \D \cap \pi^{-1}(\of{]}{a_{0},\tilde{a}_{\m}}{]}).$$
Ces deux parties sont connexes.

\subsection{Construction locale de rev\^etements cycliques}\label{constructionlocale}

Dans le cas complexe, la construction locale \'etait particuli\`erement simple car nous disposions de racines primitives de l'unit\'e de tout ordre. Elle ne sera gu\`ere plus difficile ici puisque, comme nous allons l'expliquer, un entier~$n\ge 1$ \'etant donn\'e, il existe toujours une branche de~$\Mc(A)$, et m\^eme une infinit\'e, dont l'anneau des fonctions contient une racine primitive~$n^\textrm{\`eme}$ de l'unit\'e.

Soient~$n\ge 1$ un entier, $p$ un nombre premier congru \`a~$1$ modulo~$n$ et~$\m$ un id\'eal maximal de~$A$ contenant~$p$. Notons~$\hat{A}_{\m}$ le compl\'et\'e de l'anneau~$A$ pour la topologie $\m$-adique. Soit~$\pi_{\m}$ une uniformisante de l'anneau~$\hat{A}_{\m}$. Posons
$$Q(S) = S^n-\pi_{\m}^n-T \in \Os(\D'_{\m})[S].$$
D\'efinissons un pr\'e\-fais\-ceau~$\Fs$ sur~$D'_{\m}$ en posant, pour toute partie ouverte~$W$ de~$D'_{\m}$,
$$\Fs(W) = \Os(W)[S]/(Q(S))$$
et en utilisant les morphismes de restriction induits par ceux du faisceau~$\Os$. Le caract\`ere unitaire du polyn\^ome~$Q$ assure que~$\Fs$ est un faisceau de $\Os_{D'_{\m}}$-alg\`ebres coh\'erent. Nous consid\'erons ce faisceau comme l'image directe du faisceau d'un rev\^etement fini de~$\D'_{\m}$.

Le r\'esultat classique qui suit explique le choix des entiers~$n$ et~$p$.

\begin{lem}
L'anneau~$\Z_{p}$ contient une racine primitive~$n\eme$ de l'unit\'e et, pour tout $i\in\N$, nous avons $C_{1/n}^i \in \Z_{p}$.
\end{lem}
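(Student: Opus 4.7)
The plan is to treat the two assertions separately. For the first, I invoke Hensel's lemma applied to the polynomial $X^n - 1 \in \Z_{p}[X]$. Since $n \mid p - 1$, the cyclic group $\F_{p}^*$ of order $p-1$ contains an element $\bar\zeta$ of order exactly $n$, whose reduction is a simple root of $X^n - 1$. The congruence $p \equiv 1 \pmod n$ forces $p > n$, whence $p \nmid n$, so the derivative $nX^{n-1}$ is nonzero at $\bar\zeta$. Hensel's lemma lifts $\bar\zeta$ uniquely to a root $\zeta \in \Z_{p}$ of $X^n - 1$. Since reduction $\Z_{p}^* \to \F_{p}^*$ is a group homomorphism and $\bar\zeta$ has order $n$, the order of $\zeta$ is divisible by $n$; combined with $\zeta^n = 1$, this forces $\zeta$ to be primitive of order $n$.

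For the second assertion, I would unfold the definition as
$$C_{1/n}^i = \frac{(1/n)(1/n-1)\cdots(1/n - i + 1)}{i!} = \frac{1}{n^i\, i!}\,\prod_{j=0}^{i-1}(1 - jn).$$
Since $p \nmid n$, the integer $n$ is a unit in $\Z_{p}$, so $v_{p}(n^i) = 0$ and $1/n \in \Z_{p}$. Writing $1 - jn = -n(j - 1/n)$, we obtain $v_{p}(1-jn) = v_{p}(j - 1/n)$, and it suffices to verify
$$\sum_{j=0}^{i-1} v_{p}(j - 1/n) \;\geq\; v_{p}(i!).$$
For each integer $k \geq 1$, let $r_{k}$ be the representative in $\{0, \dots, p^k - 1\}$ of the class of $1/n$ modulo $p^k$; then $v_{p}(j - 1/n) \geq k$ if and only if $j \equiv r_{k} \pmod{p^k}$. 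The number of such $j \in \{0, \ldots, i-1\}$ is at least $\lfloor i/p^k \rfloor$, since any window of $p^k$ consecutive integers meets each residue class modulo $p^k$ exactly once. Summing the identity $v_{p}(x) = \#\{k \geq 1 : v_{p}(x) \geq k\}$ over $x = j - 1/n$ and applying Legendre's formula $v_{p}(i!) = \sum_{k \geq 1} \lfloor i/p^k \rfloor$ then gives the inequality.

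Neither part presents any genuine obstacle; the only useful observation is that the $p$-adic invertibility of $n$ converts the seemingly delicate denominator bookkeeping in the generalised binomial coefficient into a routine counting argument on residue classes modulo powers of $p$.
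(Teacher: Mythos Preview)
Your proof is correct. Both assertions are handled cleanly: the Hensel lift for the root of unity is the standard argument, and your valuation count for $C_{1/n}^i$ is valid --- the key point, that any $i$ consecutive integers contain at least $\lfloor i/p^k\rfloor$ representatives of a fixed residue class modulo $p^k$, is exactly what makes the comparison with Legendre's formula work.

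There is nothing to compare against: the paper does not prove this lemma at all. It is introduced as ``le r\'esultat classique qui suit'' and stated without proof, so your argument is a genuine addition rather than a variant of something in the text.

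One remark on the second part: a shorter route, which you may prefer, is to observe that the polynomial $\binom{x}{i} = \frac{x(x-1)\cdots(x-i+1)}{i!}$ takes integer values on~$\Z$, hence by $p$-adic continuity takes values in~$\Z_p$ on all of~$\Z_p$; since $p\nmid n$ gives $1/n\in\Z_p$, the conclusion $C_{1/n}^i\in\Z_p$ follows immediately. Your explicit valuation estimate is of course equivalent, and has the merit of being self-contained.
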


Soit~$\zeta\in\hat{A}_{\m}$ une racine primitive $n^\textrm{\`eme}$ de l'unit\'e. Posons
$$U =  \left\{x\in \D'_{p}\, \big|\, |T(x)|< |\pi_{\m}(x)|^n  \right\}.$$
Le r\'esultat suivant affirme que le rev\^etement de~$\D'_{\m}$ associ\'e au faiseau~$\Fs$ est trivial au-dessus de l'ouvert~$U$.

\begin{prop}\label{nonramifie}
Il existe un isomorphisme de $\Os_{U}$-alg\`ebres
$$\varphi : \Fs \to \Os^n$$
tel que, pour tout ouvert~$V$ de~$U$ et tout \'el\'ement~$s$ de~$\Fs(V)$, nous ayons
$$\varphi(\zeta s) = \tau(\varphi(s)),$$
o\`u~$\tau$ d\'esigne l'automorphisme du faisceau~$\Os^n$ qui consiste \`a faire agir la permutation cyclique $(1\ 2\ \cdots\ n)$ sur les coordonn\'ees.
\end{prop}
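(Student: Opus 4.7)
L'approche naturelle consiste \`a transposer la preuve de la proposition~\ref{nonramifieberko} au cadre arithm\'etique : construire explicitement une racine~$\omega$ du polyn\^ome~$Q$ dans~$\Os(U)$ \`a l'aide de la s\'erie du bin\^ome, puis en d\'eduire la factorisation compl\`ete de~$Q$ et conclure par le lemme chinois.

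Plus pr\'ecis\'ement, je commencerais par invoquer le lemme pr\'ec\'edent, qui fournit \`a la fois la racine primitive~$\zeta \in \hat{A}_{\m}$ et l'inclusion $C_{1/n}^i \in \Z_{p} \subset \hat{A}_{\m}$ pour tout~$i\in\N$. Ce dernier point est essentiel : il garantit que les coefficients de la s\'erie $(1+Z)^{1/n}$ sont de valeur absolue major\'ee par~$1$ en tout point de~$\D'_{\m}$, o\`u toutes les semi-normes multiplicatives prolongent la valeur absolue $p$-adique. Sur l'ouvert~$U$, o\`u par d\'efinition $|T/\pi_{\m}^n|<1$, la s\'erie
$$\omega = \pi_{\m}\, \sum_{i=0}^{+\infty} C_{1/n}^i\, \left(\frac{T}{\pi_{\m}^n}\right)^{\!i}$$
converge donc et d\'efinit un \'el\'ement de~$\Os(U)$ v\'erifiant $\omega^n = \pi_{\m}^n + T$, soit $Q(\omega)=0$.

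J'observerais ensuite que les $n$ \'el\'ements $\omega, \zeta\omega, \ldots, \zeta^{n-1}\omega$ sont des racines de~$Q$ deux \`a deux distinctes en tout point de~$U$ : en effet, $|\omega|=|\pi_{\m}|>0$ en tout point, de sorte que $\omega$ ne s'annule nulle part sur~$U$, et $\zeta$ y reste une racine primitive~$n\eme$ de l'unit\'e. On en d\'eduit la factorisation
$$Q(S) = \prod_{j=0}^{n-1} (S - \zeta^j \omega) \textrm{ dans } \Os(U)[S].$$
Par le lemme chinois, le morphisme $\varphi : R(S) \mapsto \bigl(R(\omega), R(\zeta^{-1}\omega), \ldots, R(\zeta^{-(n-1)}\omega)\bigr)$ induit un isomorphisme $\Fs \xrightarrow{\sim} \Os^n$ de faisceaux de $\Os_{U}$-alg\`ebres, et une v\'erification directe, exactement comme dans la preuve de la proposition~\ref{nonramifieberko}, montre qu'il transforme l'action de~$\zeta$ en la permutation cyclique~$\tau$.

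L'unique point demandant une attention particuli\`ere est d'assurer la convergence de la s\'erie d\'efinissant~$\omega$ dans ce contexte arithm\'etique, o\`u les points de~$\D'_{\m}$ portent des semi-normes tr\`es vari\'ees et non simplement la valeur absolue d'un corps valu\'e complet fix\'e. Le lemme pr\'ec\'edent ram\`ene toutefois ce probl\`eme \`a une majoration uniforme des coefficients par~$1$, et la conclusion d\'ecoule alors imm\'ediatement du caract\`ere ultram\'etrique des valeurs absolues et de l'in\'egalit\'e $|T|<|\pi_{\m}|^n$ d\'efinissant~$U$.
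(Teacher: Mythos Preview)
Your proof is correct and follows essentially the same approach as the paper's own proof: construct a root of~$Q$ on~$U$ via the binomial series~$(1+Z)^{1/n}$ evaluated at~$Z=T/\pi_{\m}^n$, deduce the full factorisation of~$Q$ using the powers of~$\zeta$, and conclude by the Chinese remainder theorem. The only cosmetic difference is that the paper reserves the letter~$\omega$ for the series~$(1+T/\pi_{\m}^n)^{1/n}$ itself (so that the roots of~$Q$ are~$\pi_{\m}\,\zeta^{j}\omega$), whereas you absorb the factor~$\pi_{\m}$ into your~$\omega$; your explicit justification of convergence via the bound~$|C_{1/n}^i|\le 1$ on~$\D'_{\m}$ is a welcome clarification of what the paper dispatches in a single sentence.
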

\begin{proof}
En utilisant le lemme pr\'ec\'edent, on montre que la fonction $\pi_{\m}^{-n}\, T$ poss\`ede une racine~$n\eme$ dans~$\Os(U)$. Nous la noterons~$\omega$. On en d\'eduit l'\'egalit\'e
$$Q(S) = S^n-\pi_{\m}^n-T = \prod_{j=0}^n (S - \pi_{\m}\, \zeta^{j}\, \omega) \textrm{ dans } \Os(U)[S].$$
Par cons\'equent, le morphisme
$$\begin{array}{ccc}
\Fs & \to & \Os^n\\
R(S) & \mapsto & \left(R(\pi_{\m}\, \omega), R(\pi_{\m}\, \zeta^{-1}\, \omega),\ldots, R(\pi_{\m}\, \zeta^{-(n-1)}\, \omega)\right) 
\end{array}$$
est un isomorphisme. On v\'erifie imm\'ediatement qu'il satisfait la condition requise.
\end{proof}

D\'emontrons, \`a pr\'esent, que le rev\^etement est irr\'eductible.

\begin{lem}\label{lemintlocal}
Le polyn\^ome~$Q(S)=S^n-\pi_{\m}^n-T$ est irr\'eductible sur le corps Frac$(\Os(\D'_{\m}))$. En particulier, l'anneau $\Fs(\D'_{\m})$ est int\`egre.
\end{lem}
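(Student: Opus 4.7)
The plan is to adapt the strategy of Lemma~\ref{lemintlocalberko} to this arithmetic setting. Three steps are required: (a) exhibit a point $z\in\D'_\m$ at which the stalk $\Os_z$ carries a discrete valuation making $Q(S)$ visibly Eisenstein; (b) establish an injection $\Os(\D'_\m)\hookrightarrow\Os_z$, playing the role of Lemma~\ref{Vxirr}, from the connectedness and integrity properties of $\D'_\m$ developed in the appendix on Berkovich spaces over~$A$; and (c) descend irreducibility from $\Frac(\Os_z)$ to $\Frac(\Os(\D'_\m))$ and deduce the integrality of $\Fs(\D'_\m)$.

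For step (a), the fiber $\pi^{-1}(\tilde{a}_\m)\cap\D$ is the open unit disk over the complete discretely valued field $K=\Frac(\hat{A}_\m)$. Since $0<|\pi_\m|<1$, the element $-\pi_\m^n\in K$ lies in this open disk and corresponds to a rigid point $z$ of the Berkovich fiber. The stalk~$\Os_z$ is then a henselian discrete valuation ring whose associated valuation~$v_z$ has uniformizer $T+\pi_\m^n$; in particular $v_z(\pi_\m^n+T)=1$. The polynomial $Q(S)=S^n-(\pi_\m^n+T)$ is monic, its constant term $-(\pi_\m^n+T)$ has $v_z$-valuation exactly~$1$, and all intermediate coefficients vanish. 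By Eisenstein's criterion, $Q$~is irreducible over $\Frac(\Os_z)$.

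The main obstacle is step (b): producing the injection $\Os(\D'_\m)\hookrightarrow\Os_z$. We expect to proceed as in Lemma~\ref{Vxirr} by exhibiting a fundamental system of affinoid-type neighborhoods of $\D'_\m$ in~$\E{1}{A}$ which are irreducible in the arithmetic sense appropriate to Berkovich spaces over~$A$; passing to the inductive limit of global sections and then to the stalk yields the desired injectivity. The relevant facts are the connectedness of~$\D'_\m$ stated at the beginning of the section and the construction of such affinoid domains in the appendix on Berkovich spaces over~$A$.

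Granting (a) and (b), step (c) is formal. The inclusion $\Os(\D'_\m)\hookrightarrow\Os_z$ induces an inclusion of fraction fields $\Frac(\Os(\D'_\m))\hookrightarrow\Frac(\Os_z)$, and any nontrivial factorization of~$Q$ over the smaller field would extend to a nontrivial factorization over the larger one, contradicting step~(a); hence $Q$~is irreducible over $\Frac(\Os(\D'_\m))$. Since $Q$~is monic, uniqueness of Euclidean division yields an injection
$$\Fs(\D'_\m)=\Os(\D'_\m)[S]/(Q(S))\hookrightarrow\Frac(\Os(\D'_\m))[S]/(Q(S)),$$
whose target is a field by the irreducibility of~$Q$; therefore $\Fs(\D'_\m)$ is an integral domain, as claimed.
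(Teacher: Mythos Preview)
Your overall strategy matches the paper's, and steps~(b) and~(c) are essentially what the paper does: the injection $\Os(\D'_{\m})\hookrightarrow\Os_{z}$ follows from the principe du prolongement analytique on the connected set~$\D'_{\m}$ (Theorem~\ref{prolan}), and the passage from irreducibility over the larger field to the smaller, together with the integrality of~$\Fs(\D'_{\m})$, is exactly as in Lemma~\ref{lemintlocalberko}.

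The gap is in step~(a). You have misidentified the fibre: the residue field $\Hs(\tilde a_{\m})$ is the \emph{finite field} $A/\m$ with the trivial absolute value, not $\Frac(\hat A_{\m})$. In particular $\pi_{\m}$ vanishes at~$\tilde a_{\m}$, so your ``rigid point $T=-\pi_{\m}^{n}$'' in that fibre is nothing other than the point $T=0$, which is precisely the paper's point~$z_{\m}$. The stalk there, as recorded at the end of Appendix~\ref{sectionberko}, is $\hat A_{\m}[\![T]\!]$, a two-dimensional regular local ring, \emph{not} a DVR; so Eisenstein in the form you state does not apply directly.

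Your Eisenstein idea is easily repaired, however. In the UFD $\hat A_{\m}[\![T]\!]$ the element $T+\pi_{\m}^{n}$ is prime (the quotient is $\hat A_{\m}$), and $Q(S)=S^{n}-(T+\pi_{\m}^{n})$ is Eisenstein for that prime, hence irreducible over $\Frac(\hat A_{\m}[\![T]\!])=\Frac(\Os_{z_{\m}})$. The paper reaches the same conclusion by a slightly different route: it observes that $\pi_{\m}^{n}+T$ is not a $d^{\textrm{th}}$ power in $\hat A_{\m}[\![T]\!]$ for any $d\ge 2$ dividing~$n$, and then invokes Kummer theory (recall $\hat A_{\m}$ contains a primitive $n^{\textrm{th}}$ root of unity). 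Either argument works; yours is arguably the more elementary once the stalk is correctly identified.
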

\begin{proof} 
Notons~$z_{\m}$ le point~$0$ de la fibre $\pi^{-1}(\tilde{a}_{\m})$. D'apr\`es la discussion men\'ee \`a la fin de la section~\ref{sectionberko}, l'anneau local en ce point est isomorphe \`a l'anneau~$\hat{A}_{\m}[\![T]\!]$. Commen\c{c}ons par montrer que le polyn\^ome~$Q(S)$ est ir\-r\'e\-duc\-ti\-ble sur le corps $\Frac(\Os_{z_{\m}})$. Pour des raisons de valuation $T$-adique, l'\'el\'ement \mbox{$\pi_{\m}^n+T$} de~$\hat{A}_{\m}[\![T]\!]$ n'est racine $d^\textrm{\`eme}$ dans~$\hat{A}_{\m}[\![T]\!]$ pour aucun diviseur~\mbox{$d\ge 2$} de~$n$. D'apr\`es la th\'eorie de Kummer, cela impose au polyn\^ome~$Q(S)$ d'\^etre irr\'eductible sur~$\Frac(\Os_{z_{\m}})$. Les m\^emes arguments que dans la preuve du lemme~\ref{lemintlocalberko} permettent alors de conclure.
\end{proof}

Nous pouvons m\^eme \^etre plus pr\'ecis et d\'emontrer un principe du prolongement analytique.

\begin{lem}\label{integreunebranche}
Soient~$x$ un point de~$U$ et~$i$ un \'el\'ement de~$\cn{1}{n}$. Le morphisme
$$\rho_{i,x} : \Fs(\D'_{\m}) \to \Fs_{x} \xrightarrow[\sim]{\varphi_{x}} \Os_{x}^n \xrightarrow[]{p_{i}} \Os_{x},$$
o\`u~$p_{i}$ est la projection sur le $i^\textrm{\`eme}$ facteur, est injectif.
\end{lem}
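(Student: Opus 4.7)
Le plan est de prolonger l'argument d'irr\'eductibilit\'e du lemme~\ref{lemintlocal}. D'apr\`es la construction explicite de l'isomorphisme~$\varphi$ dans la preuve de la proposition~\ref{nonramifie}, l'application $\rho_{i,x}$ envoie la classe d'un polyn\^ome $R(S)\in\Os(\D'_{\m})[S]$ sur l'\'el\'ement $R(\beta)\in\Os_{x}$, o\`u l'on a pos\'e $\beta = \pi_{\m}\,\zeta^{-(i-1)}\,\omega$. Il s'agit donc de v\'erifier qu'un polyn\^ome $R(S) = \sum_{k=0}^{n-1} a_{k} S^k$ \`a coefficients dans~$\Os(\D'_{\m})$ v\'erifiant $R(\beta) = 0$ dans~$\Os_{x}$ est n\'ecessairement nul.

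Puisque~$\D'_{\m}$ est connexe, on invoquera le principe du prolongement analytique pour en d\'eduire l'injectivit\'e du morphisme canonique $\Os(\D'_{\m}) \to \Os_{x}$. L'int\'egrit\'e de l'anneau local~$\Os_{x}$ permettra alors de consid\'erer son corps des fractions et d'en tirer un plongement $\Frac(\Os(\D'_{\m})) \hookrightarrow \Frac(\Os_{x})$. Dans ce surcorps, $\beta$ est une racine du polyn\^ome~$Q(S)=S^n-\pi_{\m}^n-T$, lequel est irr\'eductible sur $\Frac(\Os(\D'_{\m}))$ d'apr\`es le lemme~\ref{lemintlocal}~; c'est donc son polyn\^ome minimal, et les \'el\'ements $1,\beta,\ldots,\beta^{n-1}$ sont lin\'eairement ind\'ependants sur~$\Frac(\Os(\D'_{\m}))$. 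L'\'egalit\'e $\sum_{k=0}^{n-1} a_{k}\beta^k = 0$ dans~$\Frac(\Os_{x})$ force alors $a_{0}=\cdots=a_{n-1}=0$, ce qui fournit l'injectivit\'e recherch\'ee.

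Les \'etapes alg\'ebriques ci-dessus \'etant quasi-formelles une fois le lemme~\ref{lemintlocal} acquis, l'obstacle principal r\'eside dans les pr\'erequis g\'eom\'etriques invoqu\'es : l'int\'egrit\'e des anneaux locaux de la droite analytique de Berkovich sur~$A$ aux points de~$U$, et le principe du prolongement analytique sur l'ouvert connexe~$\D'_{\m}$. Ces deux points devraient \^etre acquis au titre des propri\'et\'es g\'en\'erales \'etablies dans l'annexe~\ref{sectionberko} consacr\'ee \`a cet espace.
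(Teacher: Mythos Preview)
Your proof is correct and follows essentially the same approach as the paper's: identify~$\rho_{i,x}$ as evaluation at an explicit root~$\beta$ of~$Q$, then use the irreducibility of~$Q$ over~$\Frac(\Os(\D'_{\m}))$ (lemma~\ref{lemintlocal}) to conclude that~$Q$ is the minimal polynomial of~$\beta$, so that no nonzero polynomial of degree less than~$n$ can annihilate it. You are in fact slightly more explicit than the paper in justifying the embedding $\Frac(\Os(\D'_{\m}))\hookrightarrow\Frac(\Os_{x})$ via the analytic continuation principle and the regularity of local rings from the appendix, both of which the paper leaves implicit.
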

\begin{proof}
Soit~$s$ un \'el\'ement de l'anneau $\Fs(\D'_{\m})=\Os(\D'_{\m})[S]/(Q(S))$ dont l'image par le morphisme~$\rho_{i,x}$ est nulle. Choisissons un \'el\'ement~$R(S)$ de $\Os(\D'_{\m})[S]$ qui repr\'esente la section~$s$. Reprenons les notations de la preuve de la proposition~\ref{nonramifie}. Par hypoth\`ese, nous avons
$$R(\pi_{\m}\, \zeta^{-i}\, \omega)=0 \textrm{ dans } \Os_{x}.$$
Pour montrer que l'\'el\'ement~$s$ est nul, il suffit de montrer que le polyn\^ome~$Q(S)$ est le polyn\^ome minimal de l'\'el\'ement $\pi_{\m}\, \zeta^{-i}\, \omega$ sur le corps~$\Frac(\Os(\D'_{\m}))$. C'est bien le cas, puisque le lemme pr\'ec\'edent assure que le polyn\^ome~$Q$ est irr\'eductible sur le corps~$\Frac(\Os(\D'_{\m}))$.
\end{proof}


Terminons par un r\'esultat topologique. 

\begin{lem}\label{Festferme}
La partie 
$$F = \D'_{\m} \setminus U = \left\{x\in \D'_{\m}\, \big|\, |T(x)|\ge |\pi_{\m}(x)|^n \right\}$$
est ferm\'ee dans le disque~$\D$.
\end{lem}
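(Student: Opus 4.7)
Le plan consiste à montrer que l'ensemble complémentaire $\D\setminus F$ est ouvert dans~$\D$, en exhibant un voisinage ouvert de chaque point. Je distinguerais trois cas selon la position d'un point $y\in\D\setminus F$ relativement à la décomposition $\D = (\D\setminus\D_{\m})\sqcup(\D_{\m}\setminus\D'_{\m})\sqcup\D'_{\m}$.

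Premièrement, si~$y\in\D\setminus\D_{\m}$, je voudrais invoquer le fait que le segment~$\of{[}{a_{0},\tilde{a}_{\m}}{]}$ est une partie compacte, donc fermée, de~$\Mc(A)$~; la continuité de~$\pi$ assure alors que $\D_{\m}=\D\cap\pi^{-1}(\of{[}{a_{0},\tilde{a}_{\m}}{]})$ est fermée dans~$\D$, et $\D\setminus\D_{\m}$ est un voisinage ouvert de~$y$ contenu dans $\D\setminus F$.

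Deuxièmement, si $y\in\D_{\m}\setminus\D'_{\m}$, c'est-à-dire $\pi(y)=a_{0}$, j'utiliserais la description des points de la branche $\of{[}{a_{0},\tilde{a}_{\m}}{]}$ donnée dans l'annexe~\ref{sectionberko}~: comme~$a_{0}$ correspond à la valeur absolue triviale sur $\Frac(A)$ et que~$\pi_{\m}$ est un élément non nul, on a $|\pi_{\m}(y)|=1$, tandis que $|T(y)|<1$ puisque $y\in\D$. Ainsi l'inégalité stricte $|T(y)|<|\pi_{\m}(y)|^n$ est satisfaite, et par continuité de~$|T|$ et de~$|\pi_{\m}|$ sur~$\D$, elle persiste sur un voisinage ouvert~$W$ de~$y$ dans~$\D$~; un tel~$W$ ne rencontre pas~$F$. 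C'est précisément ce point qui constitue l'obstacle principal, car il faut s'assurer que~$|\pi_{\m}|$ admet bien un prolongement continu au bord~$\D_{\m}\setminus\D'_{\m}$ et que sa valeur y vaut~$1$, ce qui repose sur la description de l'anneau local au point~$a_{0}$ (densité de~$A$ dans~$\hat{A}_{\m}$, et trivialité de la valeur absolue en~$a_{0}$).

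Troisièmement, pour $y\in\D'_{\m}\setminus F=U$, l'inégalité stricte $|T(y)|<|\pi_{\m}(y)|^n$ étant vérifiée et les fonctions~$|T|$ et~$|\pi_{\m}|$ étant continues sur~$\D'_{\m}$, un voisinage ouvert de~$y$ y satisfait encore l'inégalité stricte, et évite donc~$F$. La réunion de ces trois types de voisinages fournit l'ouverture de $\D\setminus F$, ce qui conclut.
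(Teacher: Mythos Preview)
Your sketch is correct and follows the same overall strategy as the paper: show the complement is open, with the only non-trivial case being points~$y$ with $\pi(y)=a_{0}$. The paper organises this slightly differently, first noting that $\D_{\m}$ is closed in~$\D$ (your case~1) and then working entirely inside~$\D_{\m}$ (your cases~2 and~3).

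The one point where your argument needs sharpening is case~2. You write ``par continuit\'e de~$|\pi_{\m}|$ sur~$\D$'', but $\pi_{\m}\in\hat A_{\m}$ is not a global section on~$\D$, nor even on~$\D_{\m}$; it lives only on~$\D'_{\m}$. The relevant observation is that $|\pi_{\m}(x)|$ depends only on~$\pi(x)$ (since~$\pi_{\m}$ comes from the base), and on the branch one has $|\pi_{\m}(a_{\m}^{\eps})|=|\pi_{\m}|_{\m}^{\eps}\to 1$ as $\eps\to 0^{+}$. This is what gives the continuous extension you want; the density of~$A$ in~$\hat A_{\m}$ is not the right lever here.

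The paper sidesteps this continuity discussion entirely by writing down an explicit neighbourhood: for~$x$ over~$a_{0}$ with $r=|T(x)|<1$, pick $s\in\of{]}{r,1}{[}$ and $\eps\in\of{]}{0,1}{[}$ with $|\pi_{\m}|_{\m}^{n\eps}>s$; then
\[
\left\{y\in\pi^{-1}\big(\of{[}{a_{0},a_{\m}^{\eps}}{[}\big)\,\middle|\,|T(y)|<s\right\}
\]
is an open neighbourhood of~$x$ in~$\D_{\m}$ contained in~$V=U\cup(\D\cap\pi^{-1}(a_{0}))$. This is of course the same computation as your continuity argument, just made explicit.
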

\begin{proof}
Il suffit de montrer que~$F$ est ferm\'ee dans~$\D_{\m}$ puisque cette derni\`ere partie est elle-m\^eme ferm\'ee dans~$\D$. En d'autres termes, nous souhaitons montrer que la partie 
$$V = U \cup (\D\cap \pi^{-1}(a_{0}))$$
est ouverte dans~$\D_{\m}$. Puisque~$U$ est une partie ouverte de~$\D_{\m}$, il suffit de montrer que~$V$ est voisinage dans~$\D_{\m}$ de chacun des points de~$\D\cap \pi^{-1}(a_{0})$.

Soit~$x$ un point de~$\D\cap \pi^{-1}(a_{0})$. Posons $r=|T(x)|$. C'est un \'el\'ement de l'intervalle~$\of{]}{0,1}{[}$. Soient~$s$ un \'el\'ement de~$\of{]}{r,1}{[}$ et~$\eps$ un \'el\'ement de~$\of{]}{0,1}{[}$ tels que l'on ait $|\pi_{\m}|_{\m}^{n\eps} > s$. La partie 
$$\left\{y\in \pi^{-1}({[}{a_{0},a_{\m}^\eps}{[}) \, \big|\, |T(y)|<s \right\}$$
est un voisinage ouvert du point~$x$ dans~$\D_{\m}$ qui est contenu dans~$V$. 
\end{proof}

\subsection{Raccord et retour \`a l'alg\`ebre}\label{numerorecollement}

Soit~$G$ un groupe fini. Notons~$n\in\N^*$ son ordre. Soient $g_{1},\ldots,g_{t}$, avec~$t\in\N^*$, des g\'en\'erateurs du groupe~$G$. Pour tout \'el\'ement~$i$ de~$\cn{1}{t}$, notons~$n_{i}$ l'ordre de l'\'el\'ement~$g_{i}$, choisissons un nombre premier~$p_{i}$ congru \`a~$1$ modulo~$n_{i}$ et un id\'eal maximal~$\m_{i}$ de~$A$ contenant~$p_{i}$. Nous pouvons supposer que les~$\m_{i}$ sont distincts.

Soit~$i$ un \'el\'ement de~$\cn{1}{t}$. Construisons par la m\'ethode du num\'ero~\ref{constructionlocale} un rev\^etement galoisien de groupe~$\Z/n_{i}\Z$. Il est d\'efini au-dessus de~$\D'_{\m_{i}}$ et trivial au-dessus de
$$U_{i} = \left\{x\in \D'_{\m_{i}}\, \big|\, |T(x)| < |\pi_{\m_{i}}(x)|^{n_{i}} \right\}.$$
Notons $\textrm{Ind}_{\la g_{i} \ra}^G(\D'_{\m_{i}})$ le $G$-rev\^etement induit (\emph{cf.} section~\ref{sectioncomplexe}). 

D'apr\`es le lemme \ref{Festferme}, pour tout \'el\'ement~$i$ de~$\cn{1}{t}$, la partie $F_{i}=\D'_{\m_{i}}\setminus U_{i}$ est ferm\'ee dans~$\D$. D\'efinissons une partie ouverte de~$\D$ par 
$$U_{0} = \D\setminus \bigcup_{1\le i\le t} F_{i}.$$
On se convainc ais\'ement qu'elle est connexe. Consid\'erons le $G$-rev\^etement $\textrm{Ind}_{\la e \ra}^G(U_{0})$ induit par le rev\^etement trivial au-dessus de~$U_{0}$. Recollons ces diff\'erents rev\^etements par la m\'ethode d\'ecrite au num\'ero~\ref{sectioncomplexe}. Nous obtenons un rev\^etement de~$\D$, galoisien de groupe~$G$, associ\'e \`a un faisceau~$\Gs$. On montre \`a l'aide du lemme~\ref{integreunebranche} qu'il est int\`egre, c'est-\`a-dire que l'anneau~$\Gs(\D)$ est int\`egre. 

Nous disposons, \`a pr\'esent, d'un rev\^etement du disque~$\D$ poss\'edant le groupe de Galois d\'esir\'e~$G$. Il nous reste \`a montrer que l'extension induite entre les corps de fonctions est galoisienne de m\^eme groupe. Nous utiliserons, pour ce faire, le caract\`ere Stein du disque~$\D$ (\emph{cf.} th\'eor\`eme~\ref{lemniscateStein}).

\begin{prop}\label{GinjAutGsD}
Le groupe des automorphismes de $\Os(\D)$-alg\`ebres du faisceau~$\Gs(\D)$ est isomorphe \`a~$G$.
\end{prop}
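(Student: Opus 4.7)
Mon plan est d'établir cette proposition via la théorie de Galois des corps. Posons $K=\Frac(\Os(\D))$ et $L=\Frac(\Gs(\D))$, corps bien définis puisque~$\Gs(\D)$ est intègre par construction. L'action par recollement de~$G$ sur le faisceau~$\Gs$ induit un morphisme naturel $\rho : G \to \mathrm{Aut}_{\Os(\D)}(\Gs(\D))$, dont je cherche à montrer qu'il s'agit d'un isomorphisme.

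Pour l'injectivité, j'établirais d'abord que la restriction $\Gs(\D) \to \Gs(U_{0})$ est injective. Si $s\in\Gs(\D)$ s'annule sur~$U_{0}$, elle s'annule en particulier sur chaque~$U_{i} \subset U_{0}$ ; le lemme~\ref{integreunebranche} force alors~$s$ à s'annuler sur chaque~$\D'_{\m_{i}}$, puis la propriété de faisceau appliquée au recouvrement $\D = U_{0} \cup \bigcup_{i} \D'_{\m_{i}}$ donne~$s=0$. Or, au-dessus de~$U_{0}$, le faisceau~$\Gs$ est trivial à~$n$ feuillets indexés par~$G$ et l'action de~$G$ y correspond à la représentation régulière, qui est fidèle. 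L'injectivité de~$\rho$ en résulte.

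L'\'etape cruciale pour la surjectivit\'e est le calcul du sous-faisceau des~$G$-invariants. Je montrerais que $\Gs^G = \Os_{\D}$ par v\'erification locale : au-dessus de~$U_{0}$, les invariants de la repr\'esentation r\'eguli\`ere dans~$\Os_{U_{0}}^n$ forment la diagonale, soit~$\Os_{U_{0}}$ ; au-dessus de~$\D'_{\m_{i}}$, l'identit\'e d'adjonction induction-restriction fournit $(\mathrm{Ind}_{\la g_{i}\ra}^G \Fs_{i})^G \simeq \Fs_{i}^{\la g_{i}\ra} = \Os_{\D'_{\m_{i}}}$, puisque~$\Fs_{i}$ est galoisien cyclique d'ordre~$n_{i}$. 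En prenant les sections globales, j'obtiens $\Gs(\D)^G = \Os(\D)$.

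La conclusion rel\`eve alors de la th\'eorie de Galois classique. L'action fid\`ele de~$G$ s'\'etend au corps~$L$, et puisque le passage aux fractions commute aux invariants pour une action finie sur un anneau int\`egre, on a $L^G = \Frac(\Os(\D)) = K$. Le th\'eor\`eme d'Artin assure alors que~$L/K$ est galoisienne de groupe~$G$, de degr\'e~$|G|$. Tout $\phi \in \mathrm{Aut}_{\Os(\D)}(\Gs(\D))$ s'\'etend de mani\`ere unique en un $K$-automorphisme de~$L$, \'el\'ement de~$G$, lequel pr\'eserve tautologiquement~$\Gs(\D)$ et fournit un ant\'ec\'edent de~$\phi$ par~$\rho$, d'o\`u la surjectivit\'e. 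L'obstacle principal r\'eside dans le calcul local des invariants au-dessus de~$\D'_{\m_{i}}$, qui exige une analyse soigneuse de la structure du rev\^etement induit ; le caract\`ere Stein de~$\D$ (th\'eor\`eme~\ref{lemniscateStein}) intervient en coulisses pour garantir le bon passage entre le niveau faisceautique et celui des sections globales.
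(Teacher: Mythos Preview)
Your approach is correct but proceeds quite differently from the paper. The paper's proof is purely sheaf-theoretic and very short: it invokes the Stein property of~$\D$ (th\'eor\`eme~\ref{lemniscateStein}, specifically Theorem~A) to establish that, for any two coherent $\Os_{\D}$-algebras~$\As$ and~$\Bs$, the natural map $\textrm{Mor}_{\Os_{\D}}(\As,\Bs) \to \textrm{Mor}_{\Os(\D)}(\As(\D),\Bs(\D))$ is a bijection. Since the sheaf automorphism group $\mathrm{Aut}_{\Os_{\D}}(\Gs)$ is~$G$ by construction, the result follows immediately. Your argument, by contrast, goes through the Galois theory of fields: you compute the invariants $\Gs(\D)^G=\Os(\D)$ locally, apply Artin's theorem to obtain $\mathrm{Gal}(L/K)\simeq G$, and then identify $\mathrm{Aut}_{\Os(\D)}(\Gs(\D))$ with $\mathrm{Aut}_K(L)$.

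Two remarks on what each approach buys. First, your route is in a sense more self-contained: despite your closing sentence, the Stein property is not actually needed anywhere in your argument (taking $G$-invariants commutes with global sections for sheaf-theoretic reasons alone), whereas it is essential to the paper's proof. Second, your argument proves \emph{more} than the proposition: you obtain along the way that $L/K$ is Galois of group~$G$ and degree~$n$, which is precisely the content of the final proposition of the section. In the paper's organization, this conclusion is reached only after the separate lemme~\ref{racinedegreinfn} (which does use Theorem~A, to produce a section with distinct coordinates). Your approach thus short-circuits that lemma, at the cost of the explicit local computation of invariants over each~$\D'_{\m_i}$ --- a computation the paper never needs to perform.
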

\begin{proof}
Soient~$\As$ et~$\Bs$ deux faisceaux de $\Os_{\D}$-alg\`ebres coh\'erents. Consid\'erons l'application
$$\textrm{Mor}_{\Os}(\As,\Bs) \to \textrm{Mor}_{\Os(\D)}(\As(\D),\Bs(\D)).$$
Elle est bijective car les faisceaux~$\As$ et~$\Bs$ satisfont le th\'eor\`eme~A sur~$\D$.

Par construction, le groupe des automorphismes de $\Os_{\D}$-alg\`ebres du faisceau~$\Gs$ est isomorphe \`a~$G$. On en d\'eduit le r\'esultat attendu.
\end{proof}

Il reste \`a montrer que l'extension $\Os(\D) \to \Gs(\D)$ est enti\`ere. Puisque les th\'eor\`emes du type GAGA ne sont pas valables dans ce cadre, nous utiliserons un raisonnement direct.

\begin{lem}\label{racinedegreinfn}
Tout \'el\'ement de~$\Gs(\D)$ annule un polyn\^ome unitaire \`a coefficients dans~$\Os(\D)$ de degr\'e inf\'erieur \`a~$n$.
\end{lem}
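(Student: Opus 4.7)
La stratégie est de produire le polynôme annulateur cherché comme polynôme caractéristique de la multiplication par~$f$, en utilisant le fait que~$\Gs$ est un $\Os_{\D}$-module localement libre de rang~$n$.

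Premièrement, je vérifierais que~$\Gs$ est localement libre de rang~$n$ sur~$\D$. Sur l'ouvert~$U_{0}$, le $G$-revêtement trivial donne $\Gs|_{U_{0}}\simeq\Os_{U_{0}}^n$. Sur chaque~$\D'_{\m_{i}}$, le faisceau~$\Fs_{i}=\Os[S]/(S^{n_{i}}-\pi_{\m_{i}}^{n_{i}}-T)$ est libre de rang~$n_{i}$ en tant que $\Os$-module grâce au caractère unitaire du polynôme ; par suite, le $G$-revêtement induit $\textrm{Ind}_{\la g_{i}\ra}^G(\Fs_{i})$, somme directe de~$d_{i}=n/n_{i}$ copies de~$\Fs_{i}$, est libre de rang~$d_{i}\cdot n_{i}=n$. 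Les isomorphismes de raccord au-dessus des intersections $U_{i}=U_{0}\cap\D'_{\m_{i}}$ identifient deux structures libres de rang~$n$ (la proposition~\ref{nonramifie} assurant la trivialité sur~$U_{i}$), et comme $\{U_{0},\D'_{\m_{1}},\ldots,\D'_{\m_{t}}\}$ recouvre~$\D$, on obtient bien la liberté locale annoncée.

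Deuxièmement, soit $f\in\Gs(\D)$. La multiplication par~$f$ définit un endomorphisme $\Os_{\D}$-linéaire $m_{f}:\Gs\to\Gs$. Sur tout ouvert~$V\subset\D$ où~$\Gs|_{V}$ est libre, un choix de base représente~$m_{f}|_{V}$ par une matrice $M_{V}\in M_{n}(\Os(V))$ et fournit un polynôme caractéristique $\chi_{f,V}(X)=\det(X\cdot\textrm{Id}_{n}-M_{V})\in\Os(V)[X]$, unitaire de degré~$n$, indépendant de la base choisie. L'unicité sur les intersections permet alors de recoller ces polynômes locaux en un unique polynôme unitaire $\chi_{f}(X)\in\Os(\D)[X]$ de degré~$n$.

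Troisièmement, le théorème de Cayley-Hamilton, appliqué localement sur chaque ouvert trivialisant, donne $\chi_{f,V}(m_{f}|_{V})=0$ en tant qu'endomorphisme de~$\Gs|_{V}$. Cette annulation étant de nature locale, elle vaut globalement ; en évaluant sur la section unité $1\in\Gs(\D)$ et en utilisant la commutativité de la structure d'algèbre pour écrire $m_{f}^k(1)=f^k$, on obtient $\chi_{f}(f)=0$ dans~$\Gs(\D)$, c'est-à-dire un polynôme annulateur unitaire de degré~$n$. La seule difficulté est la vérification de la liberté locale du faisceau~$\Gs$, mais elle se lit directement sur la construction sans nécessiter d'ingrédient analytique supplémentaire : le caractère unitaire du polynôme définissant chaque~$\Fs_{i}$ garantit la liberté en tout point de~$\D'_{\m_{i}}$, y compris aux points de ramification, où le revêtement associé cesse d'être étale sans que le module sous-jacent ne dégénère.
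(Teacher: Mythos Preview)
Your argument is correct and takes a genuinely different route from the paper's. The paper proceeds by first treating the generic case where the $n$ coordinates of the germ~$s_{x_{0}}$ in $\Gs_{x_{0}}\simeq\Os_{x_{0}}^n$ are pairwise distinct: it then forms the product $\prod_{l=1}^n(Z-a_{l})$ on~$U_{0}$, and propagates this polynomial to each~$\D'_{\m_{i}}$ by a hand-made uniqueness argument relying on analytic continuation (th\'eor\`eme~\ref{prolan}). To reduce the general case to the generic one, the paper invokes the Stein property of~$\D$ (th\'eor\`eme~\ref{lemniscateStein}, th\'eor\`eme~A) to exhibit a section~$s_{0}$ with distinct germs and perturbs~$s$ by a multiple of~$s_{0}$. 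Your characteristic-polynomial approach bypasses all of this: once one observes that~$\Gs$ is locally free of rank~$n$ --- which indeed follows directly from the monicity of the defining polynomials, without any analytic input --- the base-independence of $\det(X\cdot\mathrm{Id}-m_{f})$ gives the gluing for free, and Cayley--Hamilton concludes. Your proof is shorter, more conceptual, and does not use the Stein property at this point; the paper's approach, in return, makes the annihilating polynomial explicit as the product of the Galois conjugates, which ties in more visibly with the covering picture.
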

\begin{proof}
Soit~$s$ un \'el\'ement de~$\Gs(\D)$. Nous supposerons, tout d'abord, qu'il existe un point~$x_{0}$ de~$U_{0}$ tel que toutes les coordonn\'ees de son image~$s_{x_{0}}$ dans $\Gs_{x_{0}}=\Os_{X,x_{0}}^n$ soient distinctes. Puisque l'ouvert~$U_{0}$ est connexe, le principe du prolongement analytique (\emph{cf.} th\'eor\`eme \ref{prolan}) assure qu'en tout point~$x$ de~$U_{0}$, toutes les coordonn\'ees du germe~$s_{x}$ sont distinctes. Notons $a_{1},\ldots,a_{n}$ les coordonn\'ees de l'image de~$s$ dans $\Gs(U_{0})=\Os(U_{0})^n$. Posons
$$M(Z) = \prod_{l=1}^n (Z-a_{l}) \in \Os(U_{0})[Z].$$
En tout point~$x$ de~$U_{0}$, l'image du polyn\^ome~$M$ est l'unique polyn\^ome unitaire de degr\'e inf\'erieur \`a~$n$ \`a coefficients dans~$\Os_{x}$ qui annule le germe~$s_{x}$. 

Pour tout \'el\'ement~$j$ de~$\cn{0}{t}$, posons $V_{j} = U_{0} \cup \bigcup_{1\le i\le j} \D'_{\m_{i}}$. Montrons, par r\'ecurrence, que pour tout \'el\'ement~$j$ de~$\cn{0}{t}$, il existe un polyn\^ome unitaire~$N_{j}$ de degr\'e~$n$ \`a coefficients dans~$\Os(V_{j})$ qui annule l'\'el\'ement~$s_{|V_{j}}$ de~$\Gs(V_{j})$. Nous avons d\'ej\`a trait\'e le cas~$j=0$. Soit maintenant un \'el\'ement~$j$ de~$\cn{0}{t-1}$ pour lequel l'hypoth\`ese de r\'ecurrence est v\'erifi\'ee. L'\'el\'ement~$s_{|\D'_{\m_{j+1}}}$ de l'anneau $\Gs(\D'_{\m_{j+1}})=\Os(\D'_{\m_{j+1}})[S]/(S^{n_{j+1}}-p_{j+1}^{n_{j+1}}-T)$ est annul\'e par un polyn\^ome unitaire~$M_{j+1}$ de degr\'e inf\'erieur \`a~$n$ \`a coefficients dans le corps~$\Os(\D'_{\m_{j+1}})$. Soit~$x$ un \'el\'ement de $U_{j+1} = \D'_{\m_{j+1}}\cap U_{0}$. Nous avons d\'emontr\'e qu'il existe un unique polyn\^ome unitaire de degr\'e inf\'erieur \`a~$n$ \`a coefficients dans~$\Os_{x}$ qui annule le germe~$s_{x}$. On en d\'eduit que les images des polyn\^omes~$N_{j}$ et~$M_{j+1}$ dans~$\Os_{x}[Z]$ co\"incident. L'ouvert~$U_{j+1}$ \'etant connexe, d'apr\`es le th\'eor\`eme~\ref{prolan}, les images de ces polyn\^omes dans $\Os(U_{j+1})[Z]$ co\"incident. On en d\'eduit que le polyn\^ome~$N_{j}$ se prolonge en un polyn\^ome unitaire~$N_{j+1}$ de degr\'e inf\'erieur \`a~$n$ \`a coefficients dans~$\Os(V_{j+1})$ qui annule l'\'el\'ement~$s_{|V_{j+1}}$ de~$\Gs(V_{j+1})$.

On d\'eduit finalement le r\'esultat attendu du cas~$j=t$.

\bigskip

Soit~$x_{0}$ un point de l'ouvert~$U_{0}$. La fibre du faisceau~$\Gs$ au point~$x_{0}$ est isomorphe \`a l'alg\`ebre $\Os_{x_{0}}^n$. D'apr\`es le th\'eor\`eme \ref{lemniscateStein}, le faisceau~$\Gs$ v\'erifie le th\'eor\`eme~A sur le disque~$\D$. On en d\'eduit qu'il existe un \'el\'ement~$s_{0}$ de~$\Gs(\D)$ dont toutes les coordonn\'ees de l'image dans la fibre $\Gs_{x_{0}}=\Os_{x_{0}}^n$ sont distinctes. 

Soit~$s$ un \'el\'ement de~$\Gs(\D)$. Il existe un \'el\'ement~$\lambda$ de~$\Os(\D)$ tel que toutes les coordonn\'ees du germe de la section $s_{1} = s+\lambda s_{0}$ au point~$x_{0}$ soient distinctes. Le raisonnement qui pr\'ec\`ede montre qu'il existe deux polyn\^omes unitaires~$P_{0}$ et~$P_{1}$ de degr\'e inf\'erieur \`a~$n$ \`a coefficients dans~$\Os(\D)$ qui annulent respectivement les sections~$s_{0}$ et~$s_{1}$. On en d\'eduit qu'il existe un polyn\^ome unitaire~$P$ de degr\'e inf\'erieur \`a~$n$ \`a coefficients dans~$\Os(\D)$ qui annule la section~$s$.
\end{proof}

\begin{lem}\label{AalgfermeGsD}
L'anneau~$A$ est alg\'ebriquement ferm\'e dans l'anneau~$\Gs(\D)$.
\end{lem}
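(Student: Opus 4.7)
Soit $s\in \Gs(\D)$ alg\'ebrique sur~$A$, annul\'e par un polyn\^ome non nul $P\in A[X]$. Le plan est d'\'evaluer~$s$ en un point rigide bien choisi pour montrer que son polyn\^ome minimal sur~$K := \Frac(A)$ est lin\'eaire, puis de conclure par un argument d'int\'egralit\'e que~$s$ est en fait dans~$A$.

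Je consid\`ere le point~$x_0$ de~$\D$ d\'efini par~$T=0$ dans la fibre~$\pi^{-1}(a_0)$. Cette fibre ne rencontre aucune des parties ferm\'ees~$F_i$ (qui sont contenues dans les~$\D'_{\m_i}$), donc $x_0 \in U_0$, et son corps r\'esiduel vaut~$K$. Comme le faisceau~$\Gs$ est trivial sur~$U_0$ par construction (cf.\ num\'ero~\ref{numerorecollement}), l'\'evaluation en~$x_0$ d\'efinit un morphisme $\Gs(\D)\to K^n$ envoyant~$s$ sur un $n$-uplet~$(s_1,\ldots,s_n)$ dont chaque composante v\'erifie $P(s_i)=0$. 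Puisque l'anneau~$\Gs(\D)$ est int\`egre, $s$~poss\`ede un polyn\^ome minimal irr\'eductible $M\in K[X]$ sur~$K$, qui divise~$P$. En multipliant par un d\'enominateur commun pour placer la relation $M(s)=0$ dans $\Gs(\D)$, on peut la sp\'ecialiser en~$x_0$ et obtenir $M(s_i)=0$ pour chaque~$i$ : le polyn\^ome irr\'eductible~$M$ poss\`ede donc une racine dans~$K$, ce qui l'oblige \`a \^etre lin\'eaire. En \'ecrivant $M(X)=X-c$ avec $c\in K$, il vient $s=c$ dans $\Frac(\Gs(\D))$, et par cons\'equent $s\in K\cap\Gs(\D)$.

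Il reste \`a \'etablir que $c$ appartient \`a~$A$. D'apr\`es le lemme~\ref{racinedegreinfn}, $c=s$ est entier sur~$\Os(\D)$ ; comme il appartient \`a $K\subset \Frac(\Os(\D))$ et que~$\Os(\D)$ est int\'egralement clos dans son corps des fractions (en tant que sous-anneau normal des s\'eries formelles arithm\'etiques sur l'anneau de Dedekind~$A$), on en d\'eduit $c\in \Os(\D)$. Or toute constante $c\in K$ qui d\'efinit une section globale de $\Os_{\D}$ doit \^etre r\'eguli\`ere en chaque point rigide au-dessus de chaque id\'eal maximal~$\m$ de~$A$, donc appartenir \`a la localisation $A_{(\m)}$ ; l'intersection $\bigcap_{\m} A_{(\m)} = A$ (anneau de Dedekind) donne finalement $c\in A$. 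Le point d\'elicat de la preuve est la r\'eduction du polyn\^ome minimal~$M$ \`a un polyn\^ome lin\'eaire, qui utilise de fa\c{c}on essentielle la trivialit\'e du rev\^etement au voisinage du point rigide~$x_0$ \`a corps r\'esiduel~$K$ lui-m\^eme.
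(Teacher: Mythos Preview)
Your argument follows the same core idea as the paper: restrict the section~$s$ to the point~$z_{0}$ (your~$x_{0}$), the origin in the fibre over~$a_{0}$, and exploit that the covering is trivial there. The paper works in the local ring~$\Os_{z_{0}}$, using its embedding into~$K[\![T]\!]$, whereas you pass directly to the residue field~$K$; both variants are fine.

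Where you diverge is in the hypothesis on~$P$. The paper takes~$P$ \emph{unitaire} (monic); once one knows that the image of~$s$ in~$\Os_{z_{0}}\hookrightarrow K[\![T]\!]$ is a root of~$P$, it is a constant in~$K$, and then integrality over~$A$ together with the normality of the Dedekind ring~$A$ forces it into~$A$ --- a one-line conclusion. By allowing arbitrary~$P$, you are proving a slightly stronger statement and you pay for it with your second paragraph. There, the step \og $\Os(\D)$ est int\'egralement clos dans son corps des fractions \fg{} is asserted but not justified; it is true (the local rings are regular by th\'eor\`eme~\ref{resume}, and~$\D$ is Stein), yet it is not a triviality and deserves an argument or a reference. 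In fact you can bypass it entirely: since~$\Gs$ is coherent, each stalk~$\Gs_{z_{\m}}$ is a finite~$\Os_{z_{\m}}\simeq\hat{A}_{\m}[\![T]\!]$-module, so the constant~$c=s$ is integral over~$\hat{A}_{\m}$, hence lies in~$K\cap\hat{A}_{\m}=A_{(\m)}$ for every~$\m$, giving $c\in A$ directly --- which is essentially the second half of your own last paragraph, without the detour through the normality of~$\Os(\D)$.

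In short: correct approach, same as the paper's, but your final step is more laboured than necessary because you did not take~$P$ monic; and the normality of~$\Os(\D)$, if you keep it, should be argued rather than asserted.
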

\begin{proof}
Soit~$P$ un polyn\^ome unitaire \`a coefficients dans~$A$ sans racines dans~$A$. Supposons, par l'absurde, qu'il existe une section~$s$ de~$\Gs(\D)$ qui est racine du polyn\^ome~$P$. Notons~$z_{0}$ le point~$0$ de la fibre $\pi^{-1}(a_{0})$ de l'espace~$X$. C'est un point de l'ouvert~$U_{0}$. Notons~$a$ la premi\`ere coordonn\'ee de l'image du germe~$s_{z_{0}}$ par l'isomorphisme $\Gs_{z_{0}} \xrightarrow[]{\sim} \Os_{z_{0}}^t$. C'est un \'el\'ement de~$\Os_{z_{0}}$ qui v\'erifie l'\'egalit\'e $P(a)=0$. D'apr\`es la discussion men\'ee \`a la fin de la section~\ref{sectionberko}, l'anneau local~$\Os_{z_{0}}$ se plonge dans l'anneau~$K[\![T]\!]$. On en d\'eduit que le polyn\^ome~$P$ poss\`ede une racine dans l'anneau~$K[\![T]\!]$ et donc dans le corps~$K$. Puisque l'anneau~$A$ est alg\'ebriquement ferm\'e dans le corps~$K$, cette racine doit appartenir \`a~$A$. Nous avons abouti \`a une contradiction. On en d\'eduit le r\'esultat annonc\'e. 
\end{proof}

Introduisons une d\'efinition correspondant \`a cette propri\'et\'e.

\begin{defi}
Une extension~$L$ du corps~$\Frac(\Os(\D))$ est dite r\'eguli\`ere si le corps~$K$ est alg\'ebriquement ferm\'e dans~$L$.
\end{defi}

Regroupons, \`a pr\'esent, les r\'esultats obtenus.

\begin{prop}
L'extension de corps
$$\Frac(\Os(\D)) \to \Frac(\Gs(\D))$$
est finie de degr\'e~$n$, r\'eguli\`ere et galoisienne de groupe~$G$. 
\end{prop}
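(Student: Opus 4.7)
Plan.

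The statement asserts three things about the field extension $F := \Frac(\Os(\D)) \to L := \Frac(\Gs(\D))$: degree~$n$, Galois with group~$G$, and regular (i.e.\ $K$ algebraically closed in~$L$).

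For the first two, I would combine Lemme~\ref{racinedegreinfn} and Proposition~\ref{GinjAutGsD} with Artin's classical inequality. Lemme~\ref{racinedegreinfn} shows that every element of $\Gs(\D)$, hence of~$L$, is algebraic over~$F$ of degree at most~$n$; since we are in characteristic zero, the primitive element theorem forces $[L:F]\le n$. Proposition~\ref{GinjAutGsD} yields $G\simeq\mathrm{Aut}_{\Os(\D)}(\Gs(\D))$, and this action extends uniquely to an injection $G\hookrightarrow\mathrm{Aut}_F(L)$. Artin's bound $|\mathrm{Aut}_F(L)|\le[L:F]$ then gives the sandwich
\[
n=|G|\le|\mathrm{Aut}_F(L)|\le[L:F]\le n,
\]
forcing equality throughout: $[L:F]=n$ and $L/F$ is galoisienne of group~$G$.

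For regularity, my plan is to work at the origin $z_{0}$ of the fibre $\pi^{-1}(a_{0})$, which lies in $U_{0}$, so that $\Gs_{z_{0}}\simeq\Os_{z_{0}}^n$ with $G$ permuting the $n$ factors transitively (regular representation). The key step is to show that the map $\Gs(\D)\to\Os_{z_{0}}^n$ sends any nonzero section to a tuple with all coordinates nonzero: otherwise, by transitivity, the germ at~$z_{0}$ of the norm $N(v)=\prod_{\sigma\in G}\sigma v$ vanishes; but $N(v)$ is $G$-invariant, hence descends to an element of $\Os(\D)$ (using $\Gs^G=\Os_{\D}$, visible already on $U_{0}$), and the principle of analytic continuation on the connected disc~$\D$ (Th\'eor\`eme~\ref{prolan}) then gives $N(v)=0$, which is absurd since $\Gs(\D)$ is integral and each $\sigma v$ is nonzero. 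Consequently, the map carries nonzero elements to non-zero-divisors and extends to an injection
\[
L\hookrightarrow\prod_{i=1}^{n}\Frac(\Os_{z_{0}}).
\]

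Given $\alpha\in L$ algebraic over~$K$, let $(\alpha_{1},\ldots,\alpha_{n})$ be its image. Each $\alpha_{i}$ lies in $\Frac(\Os_{z_{0}})$, which embeds into $K(\!(T)\!)$ by the local description at the end of Section~\ref{sectionberko}, and is algebraic over~$K$; since $K$ is algebraically closed in $K(\!(T)\!)$, each $\alpha_{i}$ belongs to~$K$. The polynomial $P(X)=\prod_{i=1}^{n}(X-\alpha_{i})\in K[X]$ has image $(0,\ldots,0)$ in the product, hence annihilates~$\alpha$ in~$L$ by the injectivity established above; its roots all lie in~$K$, so the minimal polynomial of~$\alpha$ over~$K$ is linear and $\alpha\in K$. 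The main technical obstacle is precisely the injectivity step: it rests on the Galois-theoretic norm construction together with analytic continuation on the connected disc~$\D$.
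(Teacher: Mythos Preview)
Your argument is correct, and for the degree-$n$/Galois part it coincides with the paper's: both combine Lemme~\ref{racinedegreinfn} (giving $[L:F]\le n$) with Proposition~\ref{GinjAutGsD} (giving $G\hookrightarrow\mathrm{Aut}_F(L)$) and sandwich with $|G|=n$.

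For regularity, your route differs from the paper's. The paper invokes Lemme~\ref{AalgfermeGsD}, which argues by contradiction on $\Gs(\D)$ itself: a hypothetical root $s\in\Gs(\D)$ of a monic $P\in A[X]$ without roots in~$A$ is sent, via a single coordinate of its germ at $z_0\in U_0$, to a root of~$P$ in $\Os_{z_0}\hookrightarrow K[\![T]\!]$, forcing a root in~$K$ and then in~$A$. You instead work directly with the fraction field: the norm $N(v)=\prod_{\sigma\in G}\sigma v$ lies in $\Gs(\D)^G=\Os(\D)$, and analytic continuation on the connected disc~$\D$ together with the integrality of $\Gs(\D)$ shows that the localization map $\Gs(\D)\to\Os_{z_0}^n$ kills no nonzero element, yielding an embedding $L\hookrightarrow\Frac(\Os_{z_0})^n$; the conclusion then follows from $K$ being algebraically closed in~$K(\!(T)\!)$. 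Both arguments hinge on the same local description at~$z_0$ (end of Section~\ref{sectionberko}). Your version is slightly longer but has the advantage of handling $\Frac(\Gs(\D))$ directly, so no separate passage from ``$A$ algebraically closed in $\Gs(\D)$'' to ``$K$ algebraically closed in $\Frac(\Gs(\D))$'' is needed.
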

\begin{proof}
L'extension $\Frac(\Os(\D)) \to \Frac(\Gs(\D))$ est finie et de degr\'e in\-f\'e\-rieur \`a~$n$ d'apr\`es le lemme~\ref{racinedegreinfn}. Elle est r\'eguli\`ere d'apr\`es le lemme~\ref{AalgfermeGsD}. On d\'eduit de la proposition~\ref{GinjAutGsD} qu'il existe un morphisme injectif du groupe~$G$ dans le groupe des $\Frac(\Os(\D))$-automorphismes du corps Frac($\Gs(\D)$). Or le groupe~$G$ a pour cardinal~$n$. On en d\'eduit que l'extension $\Frac(\Os(\D)) \to \Frac(\Gs(\D))$ est exactement de degr\'e~$n$, qu'elle est galoisienne et que son groupe de Galois est isomorphe au groupe~$G$.
\end{proof}


\begin{rem}
Puisque le disque~$\D$ est connexe, les th\'eor\`emes~\ref{prolan} et~\ref{lemniscateStein} assurent que l'anneau des sections m\'eromorphes globales~$\Ms(\D)$ est un corps isomorphe \`a $\Frac(\Os(\D))$. L'extension $\Frac(\Os(\D)) \to \Frac(\Gs(\D))$ est donc bien l'extension obtenue \`a partir du rev\^etement du disque~$\D$ associ\'e au faisceau~$\Gs$ en passant aux corps de fonctions.
\end{rem}

\subsection{Conclusion et g\'en\'eralisations}

Regroupons, \`a pr\'esent, les r\'esultats que nous avons obtenus. Puisque nous sommes partis d'un groupe fini~$G$ arbitraire, nous avons finalement d\'emontr\'e que tout groupe fini est groupe de Galois d'une extension finie et r\'eguli\`ere du corps~$\Frac(\Os(\D))$. D'apr\`es la description de l'anneau~$\Os(\D)$ donn\'ee \`a la fin de la section~\ref{sectionberko}, ce dernier est isomorphe au corps~$\Frac(\Aun)$, o\`u $\Aun$ d\'esigne l'anneau des s\'eries en une variable \`a coefficients dans~$A$ de rayon de convergence complexe sup\'erieur ou \'egal \`a~$1$ en toute place infinie. Lorsque $A=\Z$, nous retrouvons bien ainsi le r\'esultat de D.~Harbater (\emph{cf.}~\cite{galoiscovers}, corollary~3.8) \'enonc\'e en introduction.


\begin{thm}\label{thZun}
Soit~$A$ un anneau d'entiers de corps de nombres. Tout groupe fini est groupe de Galois d'une extension finie et r\'eguli\`ere du corps $\Frac(\Aun)$. 
\end{thm}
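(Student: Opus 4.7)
Mon plan consiste à synthétiser les constructions effectuées dans les sous-sections~\ref{constructionlocale} et~\ref{numerorecollement}, puis à identifier l'anneau~$\Os(\D)$ avec~$\Aun$ au moyen de la description donnée à la fin de la section~\ref{sectionberko}. Fixons un groupe fini~$G$ d'ordre~$n$ et des générateurs $g_{1},\ldots,g_{t}$, d'ordres respectifs $n_{1},\ldots,n_{t}$. Le théorème de Dirichlet sur les progressions arithmétiques permet de choisir, pour chaque indice~$i$, un nombre premier $p_{i}\equiv 1\pmod{n_{i}}$ et, puisqu'il en fournit une infinité, on peut imposer de surcroît que les idéaux maximaux $\m_{1},\ldots,\m_{t}$ de~$A$ choisis au-dessus des~$p_{i}$ soient deux à deux distincts.

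Ensuite, pour chaque~$i$, la construction du numéro~\ref{constructionlocale} produit un revêtement cyclique d'ordre~$n_{i}$ de la branche~$\D'_{\m_{i}}$, trivial au-dessus de l'ouvert $U_{i} = \{x\in \D'_{\m_{i}}\,|\, |T(x)| < |\pi_{\m_{i}}(x)|^{n_{i}}\}$, dont le complémentaire $F_{i}=\D'_{\m_{i}}\setminus U_{i}$ est fermé dans~$\D$ d'après le lemme~\ref{Festferme}. J'induirais ces revêtements en des $G$-revêtements $\textrm{Ind}_{\la g_{i}\ra}^{G}(\D'_{\m_{i}})$ puis, en recollant avec le revêtement trivial $\textrm{Ind}_{\la e\ra}^{G}(U_{0})$ au-dessus de l'ouvert connexe $U_{0}=\D\setminus\bigcup_{i}F_{i}$ selon le procédé combinatoire rappelé à la section~\ref{sectioncomplexe}, pour obtenir un faisceau cohérent~$\Gs$ de $\Os_{\D}$-algèbres muni d'une action de~$G$.

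J'invoquerais alors la proposition finale du numéro~\ref{numerorecollement} : les lemmes~\ref{integreunebranche}, \ref{GinjAutGsD}, \ref{racinedegreinfn} et~\ref{AalgfermeGsD} entraînent directement que l'extension $\Frac(\Os(\D)) \to \Frac(\Gs(\D))$ est finie, régulière et galoisienne de groupe~$G$. Il ne resterait alors qu'à observer que $\Os(\D)\simeq \Aun$ : cet isomorphisme, rappelé à la fin de la section~\ref{sectionberko}, traduit le fait que les fonctions analytiques sur le disque ouvert relatif~$\D$ sur~$\Mc(A)$ correspondent aux séries à coefficients dans~$A$ dont le rayon de convergence complexe est au moins~$1$ à chaque place archimédienne.

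Le principal obstacle — déjà localisé et neutralisé dans les développements préparatoires — tient à l'absence de théorème de type GAGA pour le disque ouvert~$\D$, qui n'est pas compact et auquel on ne peut donc appliquer la stratégie du cas projectif complexe. C'est le caractère Stein de~$\D$ (théorème~\ref{lemniscateStein}) et le principe du prolongement analytique (théorème~\ref{prolan}) qui se substituent aux théorèmes GAGA pour garantir à la fois l'intégrité globale du revêtement et le contrôle du degré au niveau des corps de fonctions ; toute la subtilité du raisonnement se concentre sur ce point.
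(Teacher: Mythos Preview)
Your proposal is correct and follows precisely the paper's own approach: the theorem is obtained by assembling the constructions of sections~\ref{constructionlocale} and~\ref{numerorecollement} (culminating in the proposition at the end of~\ref{numerorecollement}) and then invoking the isomorphism $\Os(\D)\simeq\Aun$ from the end of section~\ref{sectionberko}. Your explicit mention of Dirichlet's theorem to secure distinct primes $p_i\equiv 1\pmod{n_i}$ makes transparent a step the paper leaves implicit, and your emphasis on the Stein property as a substitute for GAGA matches exactly the paper's strategy.
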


\begin{rem}
Pour tout $r>1$, l'anneau $A_{r^-}[\![T]\!]$ des s\'eries en une variable \`a coefficients dans~$A$ de rayon de convergence complexe sup\'erieur ou \'egal \`a~$r$ en toute place infinie (une seule suffirait) est r\'eduit \`a l'anneau de polyn\^omes~$A[T]$. Si nous disposions du th\'eor\`eme pr\'ec\'edent pour un certain nombre r\'eel $r>1$, nous aurions donc r\'esolu le probl\`eme inverse de Galois g\'eom\'etrique sur~$K$.
\end{rem}

Pour finir, nous regroupons plusieurs r\'esultats proches de celui du th\'eor\`eme~\ref{thZun}. Les d\'emonstrations en sont fort similaires et nous n'indiquerons que les modifications \`a effectuer. 

Ainsi que nous l'avons d\'ej\`a signal\'e, le spectres analytique d'un anneau d'entiers de corps de nombres pr\'esente de nombreuses similitudes avec la droite analytique sur un corps trivialement valu\'e. C'est donc, tout d'abord, dans ce cadre que nous allons nous placer. Soit~$k$ un corps. Munissons-le de la valeur absolue triviale afin d'en faire un corps ultram\'etrique complet. Consid\'erons, maintenant, l'espace~$\E{2}{k}$, analogue de~$\E{1}{A}$. Nous noterons~$U$ et~$T$ les coordonn\'ees sur cet espace. 

Lorsque la caract\'eristique du corps~$k$ est nulle, pour tout entier $n\ge 1$, il existe une infinit\'e de branches de la droite $\E{1}{k}$ dont l'anneau des fonctions contient une racine primitive $n\eme$ de l'unit\'e. Posons
$$\D_{k} = \left\{x\in\E{2}{k}\, \big|\, |T(x)|<1\right\}.$$
En appliquant le raisonnement suivi dans cette section, nous d\'emontrons que tout groupe fini est groupe de Galois d'une extension finie et r\'eguli\`ere du corps~$\Frac(\Os(\D_{k}))$. Par r\'eguli\`ere, nous entendons ici que le corps~$k(U)$ est alg\'ebriquement ferm\'e dans l'extension en question.

Supposons, \`a pr\'esent, que le corps~$k$ est de caract\'eristique~$p$, o\`u~$p$ est un nombre premier. Dans ce cas, la construction des rev\^etements cycliques locaux est plus complexe. Cependant, il est possible de la mener \`a bien en faisant appel aux extensions d'Artin-Schreier-Witt, comme nous l'avons d\'ej\`a fait au num\'ero~\ref{constructionlocaleberko}. Il est plus difficile de montrer qu'un tel rev\^etement est trivial sur une partie dont le compl\'ementaire est ferm\'e dans~$\D_{k}$, mais les propri\'et\'es du flot nous permettent d'y parvenir. 

En utilisant une description explicite de l'anneau~$\Os(\D_{k})$, nous obtenons finalement le r\'esultat suivant :


\begin{thm}\label{GaloisDk}
Soit~$k$ un corps. Notons~$k_{+\infty,1^-}[\![U,T]\!]$ le sous-anneau de $k[U][\![T]\!]$ compos\'e des s\'eries de la forme 
$$\sum_{n\ge 0} a_{n}(U)T^n$$
qui v\'erifient la condition
$$\forall r>0,\forall s\in\of{[}{0,1}{[},\ \lim_{n\to +\infty}  (r^{\deg(a_{n})}\, s^n) = 0.$$
Tout groupe fini est groupe de Galois d'une extension finie et r\'eguli\`ere du corps $\Frac(k_{+\infty,1^-}[\![U,T]\!])$. 
\end{thm}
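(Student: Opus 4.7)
Le plan consiste à transposer mot pour mot la démonstration du théorème~\ref{thZun} au cadre de l'espace analytique~$\E{2}{k}$, le corps~$k$ étant muni de la valeur absolue triviale, en substituant au disque $\D \subset \E{1}{A}$ le disque relatif $\D_{k} = \{x \in \E{2}{k} \mid |T(x)|<1\}$ au-dessus de la droite~$\E{1}{k}$. La première étape est l'identification de l'anneau~$\Os(\D_{k})$ au sous-anneau~$k_{+\infty,1^-}[\![U,T]\!]$ de~$k[U][\![T]\!]$ décrit dans l'énoncé : une fonction holomorphe sur~$\D_{k}$ s'écrit $\sum_{n\ge 0} a_{n}(U)T^{n}$, et la condition $r^{\deg(a_{n})}s^{n}\to 0$ pour tous $r>0$ et $s\in\of{[}{0,1}{[}$ traduit, d'une part, la convergence au voisinage de toute branche de~$\E{1}{k}$, via les semi-normes multiplicatives $|\cdot|^{\varepsilon}$ fournies par le flot, et, d'autre part, la convergence dans la variable~$T$ sur le disque ouvert unité.

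Pour la construction locale de revêtements cycliques, on distingue deux cas selon la caractéristique de~$k$. Lorsque~$k$ est de caractéristique nulle, pour tout entier~$n\ge 1$, il existe une infinité de points fermés~$x$ de~$\E{1}{k}$ dont le corps résiduel contient une racine primitive $n\eme$ de l'unité ; au-dessus d'un voisinage tubulaire convenable de la branche issue d'un tel point dans~$\D_{k}$, on construit, comme au numéro~\ref{Kummer}, le revêtement de Kummer défini par $u^{n}=P^{n}+T$, où $P\in k[U]$ est le polynôme minimal unitaire d'un élément primitif engendrant~$\kappa(x)$. Lorsque~$k$ est de caractéristique $p>0$, les générateurs d'ordre une puissance de~$p$ sont traités via la théorie d'Artin-Schreier-Witt, en suivant le numéro~\ref{Artin-Schreier-Witt}. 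Dans les deux situations, les propriétés du flot sur~$\E{2}{k}$ décrites au numéro consacré à la valuation triviale dans la section~\ref{conclusion} garantissent que chaque revêtement est trivial au-dessus d'un ouvert du type $V' = \{y \mid |P(y)|<1 \textrm{ et } |T(y)|<|P(y)|^{n}\}$ dont le complémentaire dans~$\D_{k}$ est fermé.

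Le raccord s'effectue alors à l'identique du numéro~\ref{numerorecollement} : pour chaque générateur~$g_{i}$ d'un système générateur fini du groupe~$G$, on sélectionne un point fermé distinct, on en déduit une famille de parties $V_{i}\subset\D_{k}$ deux à deux disjointes, on construit au-dessus de chacune le revêtement $\Z/n_{i}\Z$-galoisien induit vers un $G$-revêtement et on le recolle au revêtement trivial à~$|G|$ feuillets au-dessus de $\D_{k}\setminus\bigcup_{i}(V_{i}\setminus V'_{i})$. L'intégrité du faisceau global~$\Gs$ ainsi obtenu résulte, comme pour le lemme~\ref{integreunebranche}, du principe du prolongement analytique appliqué aux ouverts connexes, et de l'analogue du lemme~\ref{lemintlocal} dans les cadres de Kummer et d'Artin-Schreier-Witt.

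Pour passer aux corps de fonctions, on invoque un analogue pour~$\D_{k}$ du théorème~\ref{lemniscateStein} : le disque relatif est un espace de Stein, dont les faisceaux cohérents vérifient les théorèmes~A et~B. Il en découle, d'une part, la proposition correspondant à~\ref{GinjAutGsD}, qui fournit une injection de~$G$ dans le groupe des automorphismes de $\Gs(\D_{k})$, et, d'autre part, la possibilité de séparer les feuillets au-dessus d'un point par une section globale, d'où l'analogue du lemme~\ref{racinedegreinfn} assurant que l'extension induite est entière de degré au plus~$|G|$. La régularité, au sens où $k(U)$ est algébriquement clos dans l'extension, s'établit comme au lemme~\ref{AalgfermeGsD} à l'aide du plongement de l'anneau local à l'origine de la fibre centrale dans un anneau de séries formelles sur une extension algébrique de~$k(U)$. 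Les principaux obstacles techniques sont la vérification du caractère Stein de~$\D_{k}$ (nécessaire pour les théorèmes~A et~B sur un disque relatif au-dessus d'un espace non compact) et, en caractéristique positive, la construction explicite des revêtements d'Artin-Schreier-Witt dont le lieu non trivial demeure fermé dans~$\D_{k}$ tout entier, deux points où l'emploi systématique du flot sur~$\E{2}{k}$ est déterminant.
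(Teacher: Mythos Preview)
Your proposal is correct and follows precisely the approach sketched in the paper: transpose the proof of th\'eor\`eme~\ref{thZun} to the disc $\D_{k}\subset\E{2}{k}$ over the trivially valued field~$k$, using Kummer extensions along branches containing suitable roots of unity in characteristic~$0$, Artin-Schreier-Witt extensions in characteristic~$p$, and the flow to control the trivialization loci, then conclude via the Stein property of~$\D_{k}$ rather than GAGA. Your write-up is in fact more detailed than the paper's own discussion, which merely indicates the necessary modifications in a few lines preceding the statement.
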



\begin{rem}
Le th\'eor\`eme pr\'ec\'edent nous permet, en particulier, de r\'ealiser, pour tout corps~$k$, tout groupe fini comme groupe de Galois sur le corps des fractions de~$k[U][\![T]\!]$. Nous \'etendons ainsi des r\'esultats de D.~Harbater (\emph{cf.}~\cite{mockcovers}, corollary~1.4 et corollary~1.5).
\end{rem}

\appendix

\section{Th\'eor\`emes GAGA relatifs sur un affino\"{\i}de}\label{annexeGAGA}

Soit~$k$ un corps muni d'une valeur absolue ultram\'etrique pour laquelle il est complet, $\As$ une alg\`ebre $k$-affino\"{\i}de et~$X$ un sch\'ema localement de type fini sur~$\As$. Dans~\cite{bleu}, 2.6, V.~Berkovich a d\'efini, de mani\`ere fonctorielle, l'analytifi\'e~$X^\an$ du sch\'ema~$X$. Il vient avec un morphisme d'espaces localement annel\'es $X^\an \to X$, qui est plat et surjectif (cette derni\`ere propri\'et\'e tombe \'evidemment en d\'efaut dans le cas complexe). \`A tout faisceau de $\Os_{X}$-modules~$\Fs$, nous pouvons associer, par r\'etrotirette, un faisceau de $\Os_{X^\an}$-modules, que nous noterons~$\Fs^\an$. Remarquons que l'analytifi\'e d'un faisceau coh\'erent est encore coh\'erent.

Dans la lign\'ee des th\'eor\`emes GAGA de J.-P.~Serre (\emph{cf.}~\cite{GAGA} et~\cite{SGA1}, expos\'e XII), nous allons nous int\'eresser aux propri\'et\'es du foncteur d'analytification lorsque l'espace~$X$ est propre. Pr\'ecis\'ement, nous allons d\'emontrer le th\'eor\`eme suivant :

\begin{thm}\label{GAGAsuraffinoide}
Soit~$k$ un corps muni d'une valeur absolue ultram\'etrique pour laquelle il est complet, $\As$ une alg\`ebre $k$-affino\"{\i}de et~$X$ un $\As$-sch\'ema propre. Alors
\begin{enumerate}[\it i)]
\item pour tout faisceau de $\Os_{X}$-modules coh\'erent~$\Fs$ et tout entier $q\in\N$, le morphisme
$$H^q(X,\Fs) \to H^q(X^\an,\Fs^\an)$$
est un isomorphisme ;
\item le foncteur d'analytification
$$\Fs \to \Fs^\an$$
induit une \'equivalence entre la cat\'egorie des $\Os_{X}$-modules coh\'erents et celle des $\Os_{X^\an}$-modules coh\'erents.
\end{enumerate}
\end{thm}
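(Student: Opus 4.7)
La strat\'egie que je propose suit, \emph{mutatis mutandis}, la d\'emarche classique de~\cite{GAGA} : d\'evissage par la cat\'egorie des faisceaux coh\'erents, r\'eduction au cas projectif via le lemme de Chow, puis r\'eduction \`a l'espace projectif lui-m\^eme, dont la cohomologie se calcule explicitement. Notons que~$\Mc(\As)$ est compact, si bien que l'analytifi\'e~$X^\an$ d'un $\As$-sch\'ema propre~$X$ est un espace analytique compact au sens de Berkovich, ce qui assure la finitude des groupes de cohomologie coh\'erente mis en jeu et permet d'utiliser les arguments de type Cartan-Serre. Les deux assertions~(i) et~(ii) se d\'eduiront l'une de l'autre par les formalismes habituels~: une fois (i) \'etablie pour tous les faisceaux coh\'erents, on applique (i) au faisceau $\Hom(\Fs,\Gs)$ pour obtenir la pleine fid\'elit\'e, et l'essentielle surjectivit\'e r\'esulte du proc\'ed\'e de r\'esolution par les $\Os(-d)^N$.

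Concr\`etement, je proc\`ederais en quatre \'etapes. \emph{Premi\`ere \'etape}~: le lemme de Chow fournit un morphisme birationnel et surjectif $f : X' \to X$ o\`u~$X'$ est projectif sur~$\As$ ; la suite spectrale de Leray et la commutation de~$Rf_{*}$ \`a l'analytification pour~$f$ propre ram\`enent l'assertion~(i) pour~$X$ \`a l'assertion pour~$X'$. \emph{Deuxi\`eme \'etape}~: un plongement ferm\'e $X \hookrightarrow \mathbf{P}^n_{\As}$ et la compatibilit\'e de l'extension par z\'ero avec l'analytification ram\`enent le probl\`eme au cas o\`u $X = \mathbf{P}^n_{\As}$. \emph{Troisi\`eme \'etape}~: pour $X = \mathbf{P}^n_{\As}$, on calcule la cohomologie de~$\Os(d)$ pour tout $d\in\Z$, du c\^ot\'e alg\'ebrique comme du c\^ot\'e analytique, au moyen de la r\'esolution de \v{C}ech associ\'ee au recouvrement standard $(D_{+}(x_{i}))_{0\le i\le n}$, respectivement \`a son analytifi\'e. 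Les complexes de \v{C}ech obtenus sont naturellement compar\'es, et l'on identifie leurs cohomologies en degr\'es~$0$ et~$n$, puis l'annulation des degr\'es interm\'ediaires. \emph{Quatri\`eme \'etape}~: \'etant donn\'e un faisceau coh\'erent arbitraire~$\Fs$ sur~$\mathbf{P}^n_{\As}$, on raisonne par r\'ecurrence descendante sur~$q$, \`a partir de la nullit\'e des $H^q$ pour $q>n$, en utilisant le tordu de Serre~$\Fs(d)$ pour $d$ assez grand, qui est alors engendr\'e par ses sections globales, tant sur~$X$ que sur~$X^\an$.

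Le point d\'elicat me semble \^etre la \emph{troisi\`eme \'etape}, c'est-\`a-dire le calcul explicite sur~$\mathbf{P}^{n,\an}_{\As}$. D\'ej\`a sur un corps valu\'e, ce calcul, d\^u \`a Kiehl en g\'eom\'etrie rigide et \`a Berkovich en g\'eom\'etrie analytique, repose sur le fait que les ouverts standard $D_{+}(x_{i})^\an$ sont des espaces Stein et sur un d\'enombrement fin des s\'eries convergentes qui y sont d\'efinies. Dans le cadre relatif sur un affino\"{\i}de, on disposera, pour chaque carte, d'une description du type
$$\Os(D_{+}(x_{i})^\an) = \As\la x_{j}/x_{i}\ra_{\textrm{conv.\ relative}},$$
et il s'agira de v\'erifier que les op\'erateurs de restriction entre ces alg\`ebres relatives satisfont aux \'enonc\'es d'annulation et de finitude escompt\'es. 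L'argument demande un contr\^ole uniforme des normes sur~$\Mc(\As)$ ; la compacit\'e de ce spectre le rend possible, et l'on en d\'eduit que pour $\Fs$ coh\'erent sur~$X$ propre, les $\Os(\Mc(\As))$-modules $H^q(X^\an,\Fs^\an)$ sont de type fini, ce qui, combin\'e au calcul sur~$\mathbf{P}^n_{\As}$, permet de conclure que le morphisme de comparaison est un isomorphisme.

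Une voie alternative, sans doute plus rapide, consiste \`a invoquer directement le th\'eor\`eme GAGA relatif de K\"opf en g\'eom\'etrie rigide (\emph{cf.} l'\'equivalence entre g\'eom\'etrie rigide et analytique au sens de Berkovich pour les espaces strictement $k$-analytiques paracompacts, due \`a V.~Berkovich), puis \`a passer du cadre rigide au cadre berkovichien via l'\'equivalence des cat\'egories de faisceaux coh\'erents correspondante. C'est cette seconde strat\'egie, plus \'economique, que je privil\'egierais en pratique, en r\'eservant l'approche directe pour les assertions de cohomologie en degr\'e sup\'erieur qui n'auraient pas \'et\'e explicitement r\'edig\'ees dans la litt\'erature berkovichienne.
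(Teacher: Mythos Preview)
Your overall architecture --- Chow's lemma, reduction to $\P^r_{\As}$, explicit cohomology of the twists $\Os(n)$, descending induction on~$q$, then full faithfulness via $\Hom$ and essential surjectivity via resolutions by direct sums of $\Os(-d)$ --- is precisely Serre's, and it is also the paper's. Two points deserve comment.

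For the cohomology of $\Os(n)$ on $\P^{r,\an}_{\As}$ (your third step), you propose a direct \v{C}ech computation on the standard affine cover. The paper takes a different route here: it follows Serre literally and argues by induction on~$r$ via a hyperplane section~$Y$, the short exact sequence $0\to\Os_X(n-1)\to\Os_X(n)\to\Os_Y(n)\to 0$, and the five lemma, so that only $H^q(X,\Os_X)$ has to be computed by hand. Both routes are legitimate; yours requires acyclicity of coherent sheaves on the analytified $D_+(x_i)$ and their intersections (these are quasi-Stein), which is available but arguably no lighter than the hyperplane argument.

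More importantly, your handling of essential surjectivity passes over the one genuine difficulty that separates the Berkovich setting from the complex one. To resolve an analytic coherent sheaf~$\Ns$ by sheaves of the form $\Os(-d)^N$, you must first prove that $\Ns(d)_x$ is generated by global sections for $d$ large, at every point~$x$. Serre's proof of this step passes through a hyperplane containing~$x$; but a point of $\P^{r,\an}_{\As}$ need not be $k$-rational and hence need not lie on any hyperplane. The paper inserts a base-change lemma at exactly this spot: if the pull-back $\pi^*\Ns$ is globally generated at some preimage of~$x$ after extending scalars from~$k$ to~$\Hs(x)$, then $\Ns$ itself is globally generated at~$x$ (the argument uses Kiehl's finiteness and Nakayama). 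This reduces to the rational case, where the hyperplane trick applies. The paper explicitly singles out this lemma as the sole addition required to Serre's argument; your sketch does not address it, and without it the resolution step is incomplete.

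Finally, your alternative via K\"opf's rigid GAGA combined with the rigid/Berkovich comparison only covers strictly $k$-affinoid~$\As$ over a nontrivially valued field. The paper wants the theorem in full generality, including the trivially valued case and non-strict affinoids --- it invokes Berkovich's extension of Kiehl's finiteness theorem precisely for this reason --- so the shortcut does not suffice on its own.
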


La preuve originale de J.-P.~Serre, qui concerne l'analytification complexe, peut \^etre adapt\'ee \`a notre contexte sans difficult\'es majeures. Signalons que le th\'eor\`eme pr\'ec\'edent a d'ailleurs d\'ej\`a \'et\'e obtenu par U.~K\"opf dans le cadre de la g\'eom\'etrie rigide (\emph{cf.}~\cite{Koepf}) et par A.~Ducros en g\'en\'eral, dans un texte in\'edit. Nous en r\'edigeons cependant une d\'emonstration pour la commodit\'e du lecteur, sans pr\'etendre aucunement \`a l'originalit\'e.

Comme dans le cas complexe, on se ram\`ene \`a d\'emontrer le th\'eor\`eme pour un espace~$X$ de la forme~$\P^r_{\As}$ et on utilise les r\'esultats classiques concernant les faisceaux coh\'erents sur un tel espace. Deux propri\'et\'es joueront un r\^ole essentiel dans la preuve : la platitude du morphisme $X^\an \to X$ et la finitude cohomologique des morphismes propres. Ce dernier point prend la forme du th\'eor\`eme suivant :

\begin{thm}\label{kiehlfini}
Soit~$k$ un corps muni d'une valeur absolue ultram\'etrique pour laquelle il est complet, $\As$ une alg\`ebre $k$-affino\"{\i}de et~$X$ un $\As$-espace analytique propre. Pour tout faisceau de $\Os_{X}$-modules coh\'erent et tout entier naturel~$q$, le $\As$-module $H^q(X,\Fs)$ est un $\As$-module de Banach de type fini.
\end{thm}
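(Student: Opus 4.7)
The plan is to adapt R.~Kiehl's classical proof of the finiteness of proper direct images in rigid analytic geometry to the Berkovich framework over an affinoid base. The argument naturally splits into three steps.

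First, since $X$ is proper over $\As$, its underlying topological space is compact, so $X$ admits a finite covering by affinoid domains. One chooses two such finite coverings $\mathfrak{U} = (U_{i})_{i \in I}$ and $\mathfrak{V} = (V_{i})_{i \in I}$, indexed by the same set $I$, with the property that $V_{i}$ is relatively compact in $U_{i}$ (in the sense of V.~Berkovich, i.e., contained in the topological interior of $U_{i}$ in $X$) for every $i \in I$. The existence of such paired coverings follows from the compactness of $X$ together with the existence, at every point, of a basis of affinoid neighborhoods having the given point in their interior.

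Second, I would compute $H^q(X, \Fs)$ by \v{C}ech cohomology. Tate's acyclicity theorem, valid in the Berkovich setting for coherent sheaves on affinoid domains, ensures that both \v{C}ech complexes $C^\bullet(\mathfrak{U}, \Fs)$ and $C^\bullet(\mathfrak{V}, \Fs)$ compute the sheaf cohomology of $\Fs$, and that the restriction morphism
$$\rho^\bullet : C^\bullet(\mathfrak{U}, \Fs) \longrightarrow C^\bullet(\mathfrak{V}, \Fs)$$
is a quasi-isomorphism. Each term of these complexes is a Banach $\As$-module in a canonical way, and the crucial additional feature is that every component $\rho^p$ is \emph{completely continuous}: indeed, the relative compactness of $V_{i_0 \cdots i_p}$ in $U_{i_0 \cdots i_p}$ implies that the restriction morphism on sections of a coherent sheaf is a uniform limit of finite-rank operators.

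Third, one invokes the functional-analytic heart of Kiehl's argument: if $u^\bullet : K^\bullet \to L^\bullet$ is a quasi-isomorphism between bounded complexes of Banach $\As$-modules whose components are all completely continuous, then, for every~$q$, the cohomology module $H^q(K^\bullet)$ is of finite type over $\As$ and carries a natural Banach $\As$-module structure. Applied to $\rho^\bullet$, this immediately yields the statement of the theorem. The main obstacle is precisely this last functional-analytic lemma, which must be established over a general $k$-affinoid algebra $\As$ rather than over a complete valued field; it rests on a relative version of L.~Schwartz's theorem on perturbations by completely continuous operators and a careful analysis of strict morphisms between Banach $\As$-modules. In the Berkovich setting over an affinoid base, these techniques are due essentially to R.~Kiehl, with subsequent clarifications by other authors; once they are in place, the reduction to the functional-analytic lemma is formal, as sketched above.
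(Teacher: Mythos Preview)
The paper does not actually prove this theorem: immediately after the statement it simply records that the result is due to R.~Kiehl in the strictly affino\"{\i}de, non-trivially valued case (\cite{Kiehlpropre}, Theorem~3.3) and was extended to the general Berkovich setting by V.~Berkovich (\cite{rouge}, proposition~3.3.5). The theorem is used as a black box in the subsequent GAGA argument, and no proof or sketch is offered.

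Your proposal therefore goes well beyond what the paper does. As a summary of the Kiehl--Berkovich argument, your outline is accurate: the paired affino\"{\i}de coverings with~$V_{i}$ relatively compact in~$U_{i}$, the comparison with \v{C}ech cohomology via Tate acyclicity, the complete continuity of the restriction maps, and the reduction to a Schwartz-type lemma on completely continuous quasi-isomorphisms of bounded Banach complexes is precisely the structure of Kiehl's proof, and Berkovich's contribution is to make the first step (existence of the paired coverings, which hinges on his notion of relative interior and boundary) and the functional analysis work without strictness or non-triviality assumptions. If you were asked to expand this into a self-contained proof, the two genuinely non-formal points you would need to supply are (a) the construction of the paired coverings from properness in Berkovich's sense, and (b) the relative Schwartz lemma over a general affino\"{\i}de~$\As$; everything else is, as you say, formal once these are in hand.
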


Ce r\'esultat a \'et\'e d\'emontr\'e par R.~Kiehl dans le cas d'un corps de valuation non triviale et d'objets strictement affino\"{\i}des (\emph{cf.}~\cite{Kiehlpropre}, Theorem~3.3). Il a \'et\'e \'etendu au cas g\'en\'eral par V.~Berkovich (\emph{cf.}~\cite{rouge}, proposition~3.3.5).

Indiquons pour finir le seul v\'eritable ajout que nous avons d\^u faire \`a la preuve de \mbox{J.-P.~Serre} (et qui figure chez A.~Ducros) : il s'agit du lemme~\ref{chgtbasekL}, un r\'esultat de changement de base, utilis\'e pour pallier le fait qu'un point de l'espace $k$-analytique~$\P^{r,\an}_{k}$ n'est pas toujours situ\'e sur un hyperplan.

\subsection{D\'emonstration}

Commen\c{c}ons par quelques r\'eductions classiques. En utilisant le lemme de Chow (\emph{cf.} \cite{EGAII}, th\'eor\`eme~5.6.1), on montre qu'il suffit de prouver le th\'eor\`eme dans le cas o\`u~$X$ est un sch\'ema projectif sur~$\As$. Dans le cas complexe, les d\'etails de l'argument figurent dans l'expos\'e XII de \cite{SGA1} ; un raisonnement en tout point semblable vaut ici. 

Si~$X$ est un sch\'ema projectif sur~$\As$, il existe une immersion ferm\'ee $\varphi : X \to \P^r_{\As}$, pour un certain entier naturel~$r$. Pour tout faisceau de $\Os_{X}$-modules coh\'erent~$\Fs$, le faisceau de $\Os_{\P^r_{\As}}$-modules $\varphi_{*}(\Fs)$, qui n'est autre que le prolongement de~$\Fs$ par z\'ero, est encore coh\'erent. On v\'erifie que cette op\'eration de prolongement commute \`a l'analytification et pr\'eserve la cohomologie. En outre, elle poss\`ede un inverse \`a gauche : la restriction \`a~$Y$. En utilisant ces propri\'et\'es, on montre qu'il suffit de prouver le th\'eor\`eme dans le cas o\`u~$X$ est un espace projectif sur~$\As$. C'est ce que nous supposerons d\'esormais.

\subsubsection{Assertion~{\it i)} lorsque $\Fs=\Os(n)$}\label{iOn}

Nous allons d\'emontrer, par r\'ecurrence sur~$r$, que, pour tout entier naturel~$r$ et tout entier relatif~$n$, l'assertion~{\it i)} du th\'eor\`eme est vraie lorsque $X = \P^r_{\As}$ et $\Fs = \Os_{X}(n)$.

Pour $r=0$, le r\'esultat d\'ecoule du th\'eor\`eme d'acyclicit\'e de Tate.

Soit~$r\in\N$ tel que le r\'esultat soit vrai pour $\P^r_{\As}$. Posons $X=\P^{r+1}_{\As}$. Soit~$t$ une section non nulle du fibr\'e $\Os_{X}(1)$ et~$Y$ l'hyperplan de~$X$ (isomorphe \`a~$\P^r_{\As}$) qu'elle d\'efinit. Nous noterons~$\Os_{Y}$ \`a la fois le faisceau structural sur~$Y$ et son prolongement par z\'ero \`a~$X$. D'apr\`es l'hypoth\`ese de r\'ecurrence, pour tout entier $q\in\N$, le morphisme
$$H^q(X,\Os_{Y}(n)) = H^q(Y,\Os_{Y}(n)) \to H^q(Y^\an,\Os_{Y^\an}(n)) = H^q(X^\an,\Os_{Y^\an}(n))$$
est un isomorphisme.

Pour tout $n\in\Z$, la multiplication par~$t$ d\'efinit une suite exacte courte
$$0 \to \Os_{X}(n-1) \to \Os_{X}(n) \to \Os_{Y}(n) \to 0.$$
En \'ecrivant la suite exacte longue associ\'ee et en utilisant le lemme des cinq, on montre que l'on a un isomorphisme $H^q(X,\Os_{X}(n)) \simeq H^q(X^\an,\Os_{X^\an}(n))$ pour tout $q\in\N$ si, et seulement si, on a un isomorphisme $H^q(X,\Os_{X}(n-1)) \simeq H^q(X^\an,\Os_{X^\an}(n-1))$ pour tout $q\in\N$. 

Un calcul explicite montre que l'on a  $H^q(X,\Os_{X}) \simeq H^q(X^\an,\Os_{X^\an})$ pour tout $q\in\N$. On en d\'eduit le r\'esultat annonc\'e.

\subsubsection{Assertion~{\it i)} en g\'en\'eral}

Soit~$r\in\N$. Posons $X=\P^r_{\As}$. Nous allons d\'emontrer, par une r\'ecurrence descendante sur~$q$, que, pour tout entier naturel~$q$, l'assertion~{\it i)} du th\'eor\`eme est vraie pour $H^q$.

Si $q>r$, pour tout faisceau de $\Os_{X}$-modules coh\'erent~$\Fs$, les groupes $H^q(X,\Fs)$ et $H^q(X^\an,\Fs^\an)$ sont tous deux nuls, et le r\'esultat est vrai. 

Soit $q\in\N^*$ tel que le r\'esultat soit vrai pour $H^q$. Soit~$\Fs$ un faisceau de $\Os_{X}$-modules coh\'erent. Nous pouvons l'ins\'erer dans une suite exacte de faisceaux de $\Os_{X}$-modules coh\'erents
$$0 \to \Rs \to \Ls \to \Fs \to 0,$$
o\`u~$\Ls$ est somme directe de faisceaux isomorphes \`a~$\Os(n)$, avec $n\in\Z$. 

Ins\'erons le faisceau~$\Rs$ dans une suite exacte courte du m\^eme type 
$$0 \to \Rs' \to \Ls' \to \Rs \to 0.$$
D'apr\`es l'hypoth\`ese de r\'ecurrence et le num\'ero~\ref{iOn}, le r\'esultat vaut pour le faisceau~$\Rs'$ en rang~$q$ et pour le faisceau~$\Ls'$ en rangs~$q$ et~$q-1$. Le lemme des cinq assure alors que le morphisme
$$H^{q-1}(X,\Rs) \to H^{q-1}(X^\an,\Rs^\an)$$
est surjectif.
 
Pour les m\^emes raisons que pr\'ec\'edemment, nous savons en outre que le r\'esultat vaut pour le faisceau~$\Rs$ en rang~$q$ et pour le faisceau~$\Ls$ en rangs~$q$ et~$q-1$. Une nouvelle application du lemme des cinq assure alors que le morphisme
$$H^{q-1}(X,\Fs) \to H^{q-1}(X^\an,\Fs^\an)$$
est bijectif.

\subsubsection{Pleine fid\'elit\'e du foncteur $\Fs \to \Fs^\an$}

Soit~$X$ un espace projectif sur~$\As$. Soient~$\Fs$ et~$\Gs$ deux faisceaux de $\Os_{X}$-modules coh\'erents. Soit~$x^\an$ un point de~$X^\an$. Notons~$x$ son image dans~$X$.

Nous disposons des isomorphismes
$$\Hom(\Fs,\Gs)^\an_{x^\an} \simeq \textrm{Hom}(\Fs,\Gs)_{x} \otimes_{\Os_{X,x}} \Os_{X^\an,x^\an}$$
et
$$\Hom(\Fs^\an,\Gs^\an)_{x^\an} \simeq \textrm{Hom}(\Fs_{x} \otimes_{\Os_{X,x}} \Os_{X^\an,x^\an}, \Gs_{x} \otimes_{\Os_{X,x}} \Os_{X^\an,x^\an}).$$

La platitude du morphisme naturel $X^\an \to X$ entra\^{\i}ne que le morphisme
$$\Hom(\Fs,\Gs)^\an \to \Hom(\Fs^\an,\Gs^\an)$$
est un isomorphisme.

On conclut en appliquant le r\'esultat de l'assertion~{\it i)} du th\'eor\`eme au faisceau coh\'erent $\Hom(\Fs,\Gs)$ et \`a l'entier $q=0$.

\subsubsection{Surjectivit\'e essentielle du foncteur $\Fs \to \Fs^\an$}

Nous allons d\'emontrer, par r\'ecurrence sur~$r$, que, pour tout entier naturel~$r$, le foncteur $\Fs \to \Fs^\an$ est essentiellement surjectif lorsque $X = \P^r_{\As}$.

Lorsque $r=0$, le r\'esultat est classique (\emph{cf.}~\cite{rouge}, proposition~2.3.1).

Soit~$r\in\N$ tel que le r\'esultat soit vrai pour le sch\'ema $\P^r_{\As}$. Posons $X=\P^{r+1}_{\As}$. Commen\c{c}ons par une s\'erie de lemmes.

\begin{lem}\label{lemhyp}
Pour tout hyperplan projectif~$Y$ de~$X$ et tout faisceau de $\Os_{X^\an}$-modules coh\'erent~$\Ns$, il existe un entier~$n_{0}$ tel que
$$\forall n\ge n_{0}, \forall q\ge 1,\ H^q(Y^\an,\Ns_{|Y^\an}(n))=0.$$
\end{lem}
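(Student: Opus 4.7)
L'id\'ee est de ramener l'annulation demand\'ee sur~$Y^\an$ \`a l'annulation classique de type Serre sur le sch\'ema~$Y$, en combinant l'hypoth\`ese de r\'ecurrence (surjectivit\'e essentielle au rang~$r$) avec la comparaison de cohomologie d\'ej\`a \'etablie (assertion~{\it i)}).

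Puisque~$Y$ est un hyperplan projectif de $X=\P^{r+1}_{\As}$, il est isomorphe \`a~$\P^r_{\As}$, de sorte que~$Y^\an$ s'identifie \`a~$\P^{r,\an}_{\As}$. Le faisceau~$\Ns_{|Y^\an}$ est coh\'erent. L'hypoth\`ese de r\'ecurrence assure que le foncteur d'analytification est essentiellement surjectif au rang~$r$ : il existe donc un faisceau coh\'erent~$\Fs$ sur~$Y$ tel que $\Fs^\an\simeq\Ns_{|Y^\an}$. Le faisceau~$\Os_{X^\an}(1)$ \'etant l'analytifi\'e de~$\Os_{X}(1)$ et le foncteur d'analytification commutant aux produits tensoriels, on en d\'eduit que $\Ns_{|Y^\an}(n)\simeq (\Fs(n))^\an$ pour tout~$n\in\Z$.

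L'assertion~{\it i)} du th\'eor\`eme, d\'ej\`a \'etablie \`a tous les rangs et pour tout~$q$, fournit alors, pour tout~$q\ge 0$ et tout~$n\in\Z$, un isomorphisme naturel
$$H^q(Y,\Fs(n))\xrightarrow{\sim} H^q(Y^\an,\Ns_{|Y^\an}(n)).$$
Il suffit donc de produire un entier~$n_{0}$ tel que $H^q(Y,\Fs(n))=0$ pour tout~$n\ge n_{0}$ et tout~$q\ge 1$, ce qui n'est autre que l'annulation classique de Serre pour le sch\'ema projectif~$Y$ sur l'anneau noeth\'erien~$\As$, appliqu\'ee au faisceau coh\'erent~$\Fs$.

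La seule pr\'ecaution est de respecter scrupuleusement l'ordre de l'induction GAGA globale, afin de n'invoquer que des r\'esultats pr\'ec\'edemment \'etablis ; les ingr\'edients ext\'erieurs se limitent \`a la noeth\'erianit\'e de~$\As$ et \`a la surjectivit\'e essentielle au rang~$r$, tous deux disponibles ici.
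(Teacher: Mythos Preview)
Your proposal is correct and follows essentially the same approach as the paper: apply the induction hypothesis to algebraize~$\Ns_{|Y^\an}$, transfer the question to the scheme~$Y$ via assertion~{\it i)}, and invoke Serre vanishing there. The paper's proof is a one-sentence sketch of exactly this argument, and you have simply spelled out the details.
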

\begin{proof}
On d\'emontre ce r\'esultat en appliquant l'hypoth\`ese de r\'ecurrence au faisceau~$\Ns_{|Y^\an}$, puis en utilisant le r\'esultat analogue pour les sch\'emas projectifs et les isomorphismes fournis par l'assertion~{\it i)} du th\'eor\`eme.
\end{proof}

\begin{lem}\label{chgtbasekL}
Soit~$L$ une extension valu\'ee compl\`ete de~$k$. Notons $\pi : X^\an_{L} \to X^\an$ le morphisme de projection. Soient~$x$ un point de~$X^\an$ et~$x_{L}$ l'un de ses ant\'ec\'edents par le morphisme~$\pi$.  Soit~$\Fs$ un faisceau de $\Os_{X^\an}$-modules coh\'erent. Supposons que la fibre~$\pi^*(\Fs)_{x_{L}}$ soit engendr\'ee par l'ensemble des sections globales de~$\pi^*(\Fs)$. Alors, la fibre~$\Fs_{x}$ est engendr\'ee par l'ensemble des sections globales de~$\Fs$.
\end{lem}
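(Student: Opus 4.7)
The strategy rests on Nakayama's lemma combined with flat base change for~$H^0$. Since~$\Fs$ is coherent, the stalk~$\Fs_x$ is finitely generated over the Noetherian local ring~$\Os_{X^\an,x}$; by Nakayama, it suffices to prove that the evaluation map
$$\mathrm{ev}_x : H^0(X^\an,\Fs) \longrightarrow \Fs(x) := \Fs_x \otimes_{\Os_{X^\an,x}} \Hs(x)$$
is surjective.

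The local ring homomorphism $\Os_{X^\an,x} \to \Os_{X^\an_L,x_L}$ is faithfully flat and induces an inclusion of complete residue fields $\Hs(x) \hookrightarrow \Hs(x_L)$. Consequently, the natural map $\Fs(x) \otimes_{\Hs(x)} \Hs(x_L) \to (\pi^*\Fs)(x_L)$ is an isomorphism, and an $\Hs(x)$-subspace $W \subseteq \Fs(x)$ equals the whole space if and only if $W \otimes_{\Hs(x)} \Hs(x_L) = (\pi^*\Fs)(x_L)$, by faithful flatness of the field extension. It thus suffices to show that the image of~$\mathrm{ev}_x$, extended by scalars to~$\Hs(x_L)$, spans $(\pi^*\Fs)(x_L)$.

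The crucial step is then a flat base change isomorphism for~$H^0$: since $\As \to \As_L := \As \hat{\otimes}_k L$ is flat and $X$ is proper over~$\As$, Theorem~\ref{kiehlfini} combined with a standard Čech argument yields a natural isomorphism
$$H^0(X^\an,\Fs) \,\hat{\otimes}_{\As}\, \As_L \;\xrightarrow{\sim}\; H^0(X^\an_L, \pi^*\Fs).$$
In particular, every global section of $\pi^*\Fs$ is an $\As_L$-linear combination of pullbacks of global sections of~$\Fs$, so the $\Hs(x_L)$-span of the image of $H^0(X^\an,\Fs)$ in $(\pi^*\Fs)(x_L)$ agrees with that of $H^0(X^\an_L,\pi^*\Fs)$. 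By hypothesis and Nakayama applied at~$x_L$, this latter span is the whole of $(\pi^*\Fs)(x_L)$, which combined with the previous paragraph concludes the proof.

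The main obstacle is establishing the flat base change isomorphism above in the Berkovich setting over $\As$-affinoids. It follows by Čech computation on a finite affinoid cover of~$X^\an$, coupled with the fact that for finitely generated Banach $\As$-modules the ordinary and completed tensor products with~$\As_L$ agree, itself a consequence of the flatness of $\As \to \As_L$ and of Theorem~\ref{kiehlfini}. All remaining manipulations are formal consequences of Nakayama and the behavior of field extensions.
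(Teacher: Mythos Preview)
Your proof is correct and follows essentially the same route as the paper: Kiehl finiteness for~$H^0$, the base-change isomorphism $H^0(X^\an,\Fs)\hat{\otimes}_k L \simeq H^0(X^\an_L,\pi^*\Fs)$, descent along the residue-field extension, and Nakayama. The only cosmetic difference is that the paper packages the argument by introducing the cokernel sheaf~$\Gs$ of the map $\Os_{X^\an}^r \to \Fs$ defined by a finite generating family of~$H^0(X^\an,\Fs)$ and shows $\Gs_x=0$, rather than tracking images in fibres directly as you do.
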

\begin{proof}
D'apr\`es le th\'eor\`eme~\ref{kiehlfini}, $\Fs(X^\an)$ est un $\As$-module de Banach fini. Consid\'erons une famille $(f_{1},\ldots,f_{r})$, avec $r\in\N$, qui l'engendre. Notons~$\Gs$ le conoyau du morphisme $\Os_{X^\an}^r \to \Fs$ d\'efini par cette famille. Puisque le produit tensoriel est exact \`a droite, le faisceau~$\pi^*(\Gs)$ est le conoyau du morphisme $\Os_{X^\an_{L}}^r \to \pi^*(\Fs)$ d\'efini par la famille $(\pi^*(f_{1}),\ldots, \pi^*(f_{r}))$. 

L'exactitude du foncteur $\cdot \hat{\otimes}_{k} L$ assure que les $(\As\hat{\otimes}_{k} L)$-modules $\pi^*(\Fs)(X^\an_{L})$ et $\Fs(X^\an) \hat{\otimes}_{k} L$ sont isomorphes. En particulier, la famille $(\pi^*(f_{1}),\ldots, \pi^*(f_{r}))$ engendre $\pi^*(\Fs)(X^\an_{L})$. Puisque, par hypoth\`ese, cet ensemble engendre $\pi^*(\Fs)_{x_{L}}$, la fibre $\pi^*(\Gs)_{x_{L}} \simeq \Gs_{x} \otimes_{\Os_{X^\an,x}} \Os_{X^\an_{L},x_{L}}$ est nulle. {\it A fortiori}, nous avons $\Gs_{x} \otimes_{\Os_{X^\an,x}} \kappa(x_{L})=0$. Puisque $\Gs_{x} \otimes_{\Os_{X^\an,x}} \kappa(x_{L}) \simeq \Gs_{x} \otimes_{\Os_{X^\an,x}} \kappa(x) \otimes_{\kappa(x)} \kappa(x_{L})$, nous avons m\^eme $\Gs_{x} \otimes_{\Os_{X^\an,x}} \kappa(x)=0$, d'o\`u l'on d\'eduit que $\Gs_{x}=0$, par le lemme de Nakayama.
\end{proof}

\begin{lem}\label{lemgen}
Soient~$\Ns$ un faisceau de $\Os_{X^\an}$-modules coh\'erent et~$x$ un point de~$X^\an$. Il existe un entier naturel~$n_{0}$ tel que, pour tout $n\ge n_{0}$, la fibre~$\Ns(n)_{x}$ soit engendr\'ee par l'ensemble des sections globales de~$\Ns(n)$.
\end{lem}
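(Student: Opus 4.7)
La stratégie consiste à adapter l'argument classique de Serre pour la génération par sections globales d'un faisceau cohérent tordu sur un espace projectif. La nouveauté principale, par rapport au cas algébrique, est l'usage du lemme~\ref{chgtbasekL} pour choisir des hyperplans rationnels passant (ou ne passant pas) par un point donné de l'espace de Berkovich $X^{\an}=\P^{r+1,\an}_{\As}$, ce qui n'est en général pas possible sur~$\As$ elle-m\^eme. Appliquons donc le lemme~\ref{chgtbasekL} \`a une extension valu\'ee compl\`ete~$L$ de~$k$ suffisamment grande pour qu'en un ant\'ec\'edent~$x_{L}$ de~$x$ dans~$X^{\an}_{L}$, il existe deux sections $t,t'\in H^{0}(X_{L},\Os(1))$ avec $t(x_{L})\neq 0$ et $t'(x_{L})=0$. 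Posons $\Ms=\pi^{*}\Ns$, et notons~$Y=V(t)$, $Y'=V(t')$ les hyperplans correspondants, de sorte que $x_{L}\notin Y^{\an}$ tandis que $x_{L}\in Y'^{\an}$.

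La suite exacte courte
$$0\to\Ms(n-1)\xrightarrow{\cdot t}\Ms(n)\to\Ms_{Y}(n)\to 0,$$
coupl\'ee au lemme~\ref{lemhyp} appliqu\'e \`a $Y\simeq\P^{r}_{\As\hat{\otimes}_{k}L}$ (qui assure $H^{q}(Y^{\an},\Ms_{Y}(n))=0$ pour $q\geq 1$ et $n$ assez grand), montre via la suite exacte longue de cohomologie que la multiplication par~$t$ est surjective sur $H^{q}(X^{\an}_{L},\Ms(n))$ pour $q\geq 1$ et $n$ assez grand. Par le th\'eor\`eme de finitude de Kiehl~\ref{kiehlfini}, ces groupes sont des modules de type fini sur l'anneau noeth\'erien $\As\hat{\otimes}_{k}L$. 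Un argument de stabilisation de type Nakayama gradu\'e, parall\`ele \`a celui de Serre dans le cas alg\'ebrique et reposant sur le cas particulier $\Fs=\Os(n)$ d\'ej\`a trait\'e au num\'ero~\ref{iOn}, permet d'en d\'eduire l'annulation $H^{q}(X^{\an}_{L},\Ms(n))=0$ pour $q\geq 1$ et $n$ assez grand. Appliqu\'ee \`a la suite exacte analogue associ\'ee \`a~$t'$, cette annulation fournit une surjection $H^{0}(X^{\an}_{L},\Ms(n))\twoheadrightarrow H^{0}(Y'^{\an},\Ms_{Y'}(n))$ pour $n$ assez grand.

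On conclut par la r\'ecurrence sur~$r$ inscrite dans la d\'emonstration du th\'eor\`eme~\ref{GAGAsuraffinoide}. L'hypoth\`ese de r\'ecurrence appliqu\'ee \`a $Y'\simeq\P^{r}_{\As\hat{\otimes}_{k}L}$ et au faisceau~$\Ms_{Y'}$ au point $x_{L}\in Y'^{\an}$ garantit que, pour $n$ assez grand, la fibre $\Ms_{Y'}(n)_{x_{L}}$ est engendr\'ee par les sections globales de~$\Ms_{Y'}(n)$. En relevant ces sections, via la surjection pr\'ec\'edente, \`a des sections globales de~$\Ms(n)$ sur~$X^{\an}_{L}$, puis en appliquant le lemme de Nakayama en~$x_{L}$ relativement \`a l'id\'eal engendr\'e par $t'\in\mathfrak{m}_{x_{L}}$, on obtient que la fibre $\Ms(n)_{x_{L}}$ est engendr\'ee par les sections globales de~$\Ms(n)$~; le lemme~\ref{chgtbasekL} permet enfin de redescendre le r\'esultat au point~$x$ de~$X^{\an}$. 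L'obstacle principal r\'eside dans l'annulation cohomologique \`a la Serre dans ce cadre analytique banachique~: en l'absence de r\'esolutions globales par des sommes directes de~$\Os(-m)$, il faut s'appuyer finement sur le cas d\'ej\`a trait\'e des~$\Os(n)$ et sur la noeth\'erianit\'e de $\As\hat{\otimes}_{k}L$ pour mener \`a bien la stabilisation gradu\'ee.
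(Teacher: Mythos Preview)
Your proposal takes a more circuitous route than the paper and contains a genuine gap at its central step.

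The paper uses a \emph{single} hyperplane~$Y$ passing through the point~$x$ (after reducing to~$x$ rational via the lemma~\ref{chgtbasekL}). From the four-term sequence induced by multiplication by~$t$ it only needs to show that the maps $H^1(X^\an,\Ns(n-1))\to H^1(X^\an,\Ns(n))$ eventually become \emph{isomorphisms}: surjectivity comes from the lemma~\ref{lemhyp}, and injectivity follows because the increasing chain of kernels inside the noetherian module $H^1(X^\an,\Ns(n_1-1))$ stabilises. This eventual isomorphism already forces $H^0(X^\an,\Ns(n))\to H^0(Y^\an,\Ns_{|Y^\an}(n))$ to be surjective for large~$n$, and one concludes by Nakayama. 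No vanishing of~$H^q$ is ever required.

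Your argument, by contrast, introduces two hyperplanes and hinges on the stronger assertion that $H^q(X^\an_L,\Ms(n))=0$ for all $q\ge 1$ and~$n$ large. You obtain surjectivity of multiplication by~$t$ on~$H^q$ from the lemma~\ref{lemhyp}, and by noetherianity this gives eventual \emph{isomorphism} of the successive~$H^q$'s; but eventual isomorphism is far from vanishing. The ``stabilisation gradu\'ee de type Nakayama'' you invoke would require the graded module $\bigoplus_{n\ge n_0} H^q(X^\an_L,\Ms(n))$ to be finitely generated over the homogeneous coordinate ring \emph{and} killed by the irrelevant ideal, and you have no control over the bottom graded piece. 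You yourself flag this as ``l'obstacle principal'' in the closing sentence, and the appeal to the case $\Fs=\Os(n)$ does not resolve it: without global resolutions of~$\Ms$ by sums of~$\Os(-m)$ (which is precisely what one is trying to establish), knowing the cohomology of~$\Os(n)$ gives no leverage on that of~$\Ms(n)$.

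A secondary issue: your short exact sequence $0\to\Ms(n-1)\xrightarrow{\cdot t}\Ms(n)\to\Ms_Y(n)\to 0$ assumes that multiplication by~$t$ is injective on~$\Ms$, which need not hold for an arbitrary coherent sheaf. The paper accounts for this by writing a four-term sequence with a kernel~$\Ns'$ supported on~$Y^\an$ and handling it via the lemma~\ref{lemhyp} as well. This is easily repaired in your sketch, but it should be said.

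The fix is simply to drop the hyperplane avoiding~$x_L$ altogether: work only with your~$Y'=V(t')$ through~$x_L$, run the paper's stabilisation argument to get that $H^1(\Ms(n-1))\to H^1(\Ms(n))$ is eventually an isomorphism, and deduce directly the surjectivity onto $H^0(Y'^{\an},\Ms_{Y'}(n))$. Your concluding Nakayama step and the descent via the lemma~\ref{chgtbasekL} are then correct as written.
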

\begin{proof}
D'apr\`es le lemme~\ref{chgtbasekL}, quitte \`a effectuer un changement de corps de base de~$k$ \`a~$\Hs(x)$ (et \`a modifier les autres donn\'ees en cons\'equence), nous pouvons supposer que le point~$x$ est $k$-rationnel. Il est alors situ\'e sur l'analytifi\'e~$Y^\an$ d'un certain hyperplan projectif~$Y$ de~$X$.

Soit~$t$ une section de~$\Os_{X}(1)$ de lieu des z\'eros~$Y$. La multiplication par~$t$ d\'efinit une suite exacte
$$0 \to \Ns' \to \Ns(-1) \to \Ns \to \Ns_{|Y^\an} \to 0,$$
o\`u~$\Ns'$ est un faisceau de $\Os_{X^\an}$-modules coh\'erent support\'e par~$Y^\an$.

Soit~$n\in\Z$. En tensorisant la suite pr\'ec\'edente par~$\Os_{X^\an}(n)$ puis en la scindant, nous obtenons les deux suites exactes courtes
$$0 \to \Ns'(n) \to \Ns(n-1) \to \Ps_{n} \to 0$$
et
$$0 \to \Ps_{n} \to \Ns(n) \to \Ns_{|Y^\an}(n) \to 0,$$
qui donnent naissance aux deux suites exactes de cohomologie
$$H^1(X^\an,\Ns(n-1)) \to H^1(X^\an,\Ps(n)) \to H^2(X^\an,\Ns'(n))$$
et
$$H^1(X^\an,\Ps_{n}) \to H^1(X^\an,\Ns(n)) \to H^1(X^\an,\Ns_{|Y^\an}(n)).$$

D'apr\`es le lemme~\ref{lemhyp}, il existe un entier~$n_{1}$ tel que, pour tout $n\ge n_{1}$, les groupes de cohomologie
$$H^2(X^\an,\Ns'(n)) \textrm{ et } H^1(X^\an,\Ns_{|Y^\an}(n))$$
soient nuls et, par cons\'equent, le morphisme compos\'e 
$$H^1(X^\an,\Ns(n-1)) \to H^1(X^\an,\Ps_{n}) \to H^1(X^\an,\Ns(n))$$
soit surjectif.

Le morphisme $X^\an \to \Mc(\As)$ \'etant propre, le th\'eor\`eme~\ref{kiehlfini}, assure que le $\As$-module $H^1(X^\an,\Ns(n_{1}-1))$ est de type fini, et donc noeth\'erien. On en d\'eduit qu'il existe un entier $n_{2}\ge n_{1}$ tel que, pour tout $n\ge n_{2}$, le morphisme
$$H^1(X^\an,\Ns(n-1)) \to H^1(X^\an,\Ns(n))$$
soit un isomorphisme. Par cons\'equent, pour tout $n\ge n_{2}$, le morphisme surjectif 
$$H^1(X^\an,\Ps_{n}) \to H^1(X^\an,\Ns(n))$$ 
est bijectif, d'o\`u l'on d\'eduit, en consid\'erant la suite exacte longue associ\'e \`a la seconde suite exacte courte, que le morphisme
$$H^0(X^\an,\Ns(n)) \to H^0(X^\an,\Ns_{|Y^\an}(n))$$
est surjectif.

D'apr\`es l'hypoth\`ese de r\'ecurrence, le faisceau de $\Os_{Y^\an}$-modules coh\'erent~$\Ns_{|Y^\an}$ est l'analytifi\'e d'un faisceau de $\Os_{Y}$-modules coh\'erent~$\Gs$. Notons~$x^\alg$ l'image du point~$x$ dans~$Y$. Les r\'esultats classiques sur les sch\'emas projectifs assurent qu'il existe un entier $n_{0} \ge n_{2}$ tel que, pour tout $n\ge n_{0}$, la fibre $\Gs(n)_{x^\alg}$ soit engendr\'ee, en tant que $\Os_{Y,x^\alg}$-module, par l'ensemble des sections globales $H^0(Y,\Gs(n))$. En utilisant l'assertion~{\it i)} du th\'eor\`eme, on en d\'eduit que le r\'esultat vaut encore en rempla\c{c}ant respectivement~$Y$ par~$Y^\an$, $\Gs$ par~$\Ns_{|Y^\an}$ et~$x^\alg$ par~$x$.

Notons~$\Is$ le faisceau d'id\'eaux qui d\'efinit~$Y^\an$ dans~$X^\an$. Remarquons que sa fibre~$\Is_{x}$ est contenue dans l'id\'eal maximal~$\m_{x}$ de~$\Os_{X^\an,x}$. Soit~$n\ge n_{0}$. Nous venons de montrer que $\Ns(n)_{x}\otimes_{\Os_{X^\an,x}} (\Os_{X^\an,x}/\Is_{x})$ est engendr\'e par $H^0(X^\an,\Ns_{|Y^\an}(n))$. On en d\'eduit que $\Ns(n)_{x}\otimes_{\Os_{X^\an,x}} (\Os_{X^\an,x}/\m_{x})$ est engendr\'e par $H^0(X^\an,\Ns_{|Y^\an}(n))$, et donc par $H^0(X^\an,\Ns(n))$. On conclut par le lemme de Nakayama.
\end{proof}

Terminons, \`a pr\'esent, la d\'emonstration. Soit~$\Ns$ un faisceau de $\Os_{X^\an}$-modules coh\'erent. En utilisant le r\'esultat du lemme pr\'ec\'edent et la compacit\'e de~$X^\an$, on montre qu'il existe un entier~$n$ tel qu'en tout point~$x$ de~$X^\an$, la fibre~$\Ns(n)_{x}$ soit engendr\'ee par $H^0(X^\an,\Ns(n))$. On en d\'eduit l'existence d'un entier naturel~$p$, d'un faisceau de $\Os_{X^\an}$-modules coh\'erent~$\Rs$ et d'une suite exacte
$$0 \to \Rs \to \Os_{X^\an}(-n)^p \to \Ns \to 0.$$
En appliquant le m\^eme raisonnement au faisceau~$\Rs$, nous parvenons finalement \`a \'ecrire le faisceau~$\Ns$ comme le conoyau d'un morphisme $\varphi : \Os_{X^\an}(-m)^q \to \Os_{X^\an}(-n)^p$, avec $m\in\Z$ et $q\in\N$. Puisque le foncteur d'analytification est pleinement fid\`ele, le morphisme~$\varphi$ est l'analytifi\'e d'un morphisme $\varphi^\alg : \Os_{X}(-m)^q \to \Os_{X}(-n)^p$. L'exactitude \`a droite du foncteur d'analytification assure alors que le faisceau~$\Ns$ est isomorphe \`a l'analytifi\'e du conoyau du morphisme~$\varphi^\alg$, qui est un faisceau de $\Os_{X}$-modules coh\'erent.

\subsection{Corollaires}

Nous \'enon\c{c}ons ici deux corollaires du th\'eor\`eme~\ref{GAGAsuraffinoide}. Ils ont \'egalement pour objet des r\'esultats de type GAGA, mais sur des bases qui ne sont plus n\'ecessairement affino\"{\i}des. Nous sommes convaincu qu'il est possible de les \'etendre \`a une base quelconque, de fa\c{c}on \`a obtenir un analogue parfait des th\'eor\`emes obtenus par M.~Hakim dans le cadre de la g\'eom\'etrie analytique complexe (\emph{cf.}~\cite{Hakim}, chapitre~VIII, th\'eor\`emes~3.2 et~3.5). Cependant, pour \'eviter d'avoir \`a utiliser le formalisme un peu lourd des sch\'emas relatifs sur un espace analytique, nous nous contenterons d'\'enoncer les deux cas particuliers que nous utilisons dans cet article.\\

Soit~$k$ un corps muni d'une valeur absolue ultram\'etrique pour laquelle il est complet, $\As$ une alg\`ebre $k$-affino\"{\i}de et~$X$ un $\As$-sch\'ema propre. Soit~$B$ une partie compacte de~$\Mc(\As)$ poss\'edant un syst\`eme fondamental de voisinages affino\"{\i}des. Rappelons que la notation~$\Os(B)$ d\'esigne l'anneau des germes de fonctions analytiques au voisinage de~$B$. Notons $Y = X\times_{\Spec(\As)}\Spec(\Os(B))$ et~$Y^\an$ l'image r\'eciproque de~$B$ dans~$X^\an$. Munissons~$Y^\an$ du faisceau des fonctions surconvergentes. En utilisant le morphisme d'analytification au-dessus d'un espace affino\"{\i}de d\'efini par V.~Berkovich, on montre qu'il existe un morphisme d'espaces localement annel\'es $Y^\an \to Y$. La r\'etrotirette d'un faisceau de $\Os_{Y}$-modules coh\'erent~$\Fs$ est un faisceau de $\Os_{Y^\an}$-modules coh\'erent, que nous noterons~$\Fs^\an$.

\begin{cor}\label{GAGAvoiscompact}
Supposons que nous nous trouvons dans la situation d\'ecrite ci-dessus. Alors
\begin{enumerate}[\it i)]
\item pour tout faisceau de $\Os_{Y}$-modules coh\'erent~$\Fs$ et tout entier $q\in\N$, le morphisme
$$H^q(Y,\Fs) \to H^q(Y^\an,\Fs^\an)$$
est un isomorphisme ;
\item le foncteur d'analytification
$$\Fs \to \Fs^\an$$
induit une \'equivalence entre la cat\'egorie des $\Os_{Y}$-modules coh\'erents et celle des $\Os_{Y^\an}$-modules coh\'erents.
\end{enumerate}
\end{cor}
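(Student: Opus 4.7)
The plan is to reduce both assertions to the affinoid case already treated in Theorem~\ref{GAGAsuraffinoide} by passing to a filtered colimit over a fundamental system of affinoid neighborhoods of~$B$. Let $(V_i)_{i\in I}$ be such a system, directed by reverse inclusion, and set $\As_i = \Os(V_i)$, so that $\Os(B) = \varinjlim_i \As_i$ with flat transition morphisms. Put $X_i = X\times_{\Spec(\As)} \Spec(\As_i)$ and denote by $X_i^\an$ the preimage of $V_i$ in~$X^\an$; this is a proper $\As_i$-scheme, resp. a proper $V_i$-analytic space, so that Theorem~\ref{GAGAsuraffinoide} applies at each level~$i$. Moreover, since $X$ is proper over~$\As$, the projection $X^\an \to \Mc(\As)$ is proper, so $Y^\an$ is compact and the family $(X_i^\an)$ is a fundamental system of compact neighborhoods of~$Y^\an$ in~$X^\an$.

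For assertion~(i), one first checks that any coherent $\Os_Y$-module~$\Fs$ descends to some~$X_{i_0}$: by the standard EGA descent result for coherent sheaves on a proper scheme over a filtered colimit of Noetherian rings with flat transitions, there exists $i_0$ and a coherent sheaf $\Fs_{i_0}$ on~$X_{i_0}$ whose pullback to~$Y$ is~$\Fs$. Write $\Fs_i$ for the pullback to~$X_i$. Flatness of $\As_{i_0} \to \Os(B)$ combined with the finiteness of $H^q(X_i,\Fs_i)$ yields $H^q(Y,\Fs) = \varinjlim_{i\ge i_0} H^q(X_i,\Fs_i)$. On the analytic side, because $Y^\an$ carries the overconvergent sheaf and is cut out in~$X^\an$ as the intersection of the cofinal family of compact neighborhoods~$X_i^\an$, the cohomology on the germ is the filtered colimit $\varinjlim_i H^q(X_i^\an,\Fs_i^\an)$ (this is the standard compatibility of coherent cohomology with compact decreasing intersections of analytic domains). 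Applying Theorem~\ref{GAGAsuraffinoide} to each index~$i$ and passing to the colimit then gives the required isomorphism.

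For assertion~(ii), full faithfulness follows by applying the case $q=0$ of assertion~(i) to the coherent sheaf $\Hom(\Fs,\Gs)$, together with the flatness of $\Os_{X^\an,x^\an}$ over $\Os_{X_i^\an,x^\an}$ at each point $x^\an \in Y^\an$ (which compares the Hom-sheaves levelwise, just as in the proof of the affinoid GAGA). Essential surjectivity is the main obstacle. Given a coherent $\Os_{Y^\an}$-module~$\Ns$, one must produce a coherent extension~$\Ns_{i_0}$ on some~$X_{i_0}^\an$, as then Theorem~\ref{GAGAsuraffinoide} algebraizes $\Ns_{i_0}$ into a coherent $\Os_{X_{i_0}}$-module, whose pullback to~$Y$ provides the desired algebraic model of~$\Ns$. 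By compactness of~$Y^\an$, cover it by finitely many affinoid domains on which $\Ns$ is the cokernel of an explicit morphism $\Os^p \to \Os^q$; the coefficients being overconvergent, both the local presentations and the gluing isomorphisms on overlaps extend to slightly larger affinoid domains contained in some~$X_{i_0}^\an$, and after possibly refining~$i_0$ the gluing relations hold strictly (using the full faithfulness already established at the level of $X_{i_0}^\an$ and the filtered colimit structure). The resulting $\Ns_{i_0}$ is coherent on~$X_{i_0}^\an$ and restricts to~$\Ns$ on~$Y^\an$, which concludes the argument. The hard part is precisely this extension/gluing step, which rests on Berkovich's formalism of germs and on the fact that every coherent sheaf on a compact analytic part with overconvergent structure sheaf comes, by definition, from a coherent sheaf on some affinoid neighborhood.
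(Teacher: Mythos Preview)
Your plan is correct and follows essentially the same route as the paper: both reduce to Theorem~\ref{GAGAsuraffinoide} via a fundamental system of affinoid neighborhoods of~$B$, and the crux of essential surjectivity is the extension of a coherent sheaf from the compact germ~$Y^\an$ to some~$X_{i_0}^\an$. The paper's proof is terser---it observes that~$Y^\an$ is compact and then invokes the argument of Proposition~1 of~\cite{SemCartan419} (Cartan's extension of coherent sheaves from a compact) to obtain the extension in one stroke---whereas you spell out the colimit computations on both sides and sketch the extension step by hand via finite affinoid covers and overconvergent local presentations; these are the same argument at different levels of detail.
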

\begin{proof}
Il faut tout d'abord remarquer que l'espace~$Y^\an$ est compact. En reprenant le raisonnement de la preuve de la proposition~$1$ de~\cite{SemCartan419}, on en d\'eduit que tout faisceau coh\'erent sur~$Y^\an$ se prolonge en un faisceau coh\'erent sur un voisinage de~$Y^\an$, et donc sur une partie de la forme~$Y^\an\times_{\Mc(\As)} V$, o\`u~$V$ est un voisinage affino\"{\i}de de~$B$. En utilisant ce raisonnement et le th\'eor\`eme~\ref{GAGAsuraffinoide}, on obtient le r\'esultat attendu.
\end{proof}

Soit~$k$ un corps muni d'une valeur absolue ultram\'etrique pour laquelle il est complet, $\As$ une alg\`ebre $k$-affino\"{\i}de et~$X$ un $\As$-sch\'ema propre. Soit~$B$ un espace $\As$-analytique qui soit limite inductive d'espaces affino\"{\i}des. Notons $Z = X\times_{\Spec(\As)}\Spec(\Os(B))$. En utilisant le morphisme d'analytification au-dessus d'un espace affino\"{\i}de d\'efini par V.~Berkovich, on construit, par limite inductive, un espace analytique~$Z^\an$ et un morphisme d'espaces localement annel\'es $Z^\an \to Z$. Comme pr\'ec\'edemment, la r\'etrotirette d'un faisceau de $\Os_{Z}$-modules coh\'erent~$\Fs$ est un faisceau de $\Os_{Z^\an}$-modules coh\'erent, que nous noterons~$\Fs^\an$. Le r\'esultat suivant se d\'eduit alors ais\'ement du th\'eor\`eme~\ref{GAGAsuraffinoide}.

\begin{cor}\label{GAGAsurouvert}
Supposons que nous nous trouvons dans la situation d\'ecrite ci-dessus. Alors
\begin{enumerate}[\it i)]
\item pour tout faisceau de $\Os_{Z}$-modules coh\'erent~$\Fs$ et tout entier $q\in\N$, le morphisme
$$H^q(Z,\Fs) \to H^q(Z^\an,\Fs^\an)$$
est un isomorphisme ;
\item le foncteur d'analytification
$$\Fs \to \Fs^\an$$
induit une \'equivalence entre la cat\'egorie des $\Os_{Z}$-modules coh\'erents et celle des $\Os_{Z^\an}$-modules coh\'erents.
\end{enumerate}
\end{cor}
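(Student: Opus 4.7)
The plan is to reduce Corollary~\ref{GAGAsurouvert} to Corollary~\ref{GAGAvoiscompact} by exploiting the inductive limit presentation $B = \varinjlim_{i\in I} B_{i}$ with each $B_{i}$ affino\"{\i}de and the transition morphisms being open immersions. Writing $Z_{i} = X\times_{\Spec(\As)}\Spec(\Os(B_{i}))$ and $Z_{i}^{\an} = X^{\an}\times_{\Mc(\As)} B_{i}$, the analytic space $Z^{\an}$ is covered by the open subspaces $Z_{i}^{\an}$, and each compact $B_{i}$ has a fundamental system of affino\"{\i}de neighbourhoods (namely, itself), so Corollary~\ref{GAGAvoiscompact} applies on each piece: the analytification functor $\Fs\mapsto \Fs^{\an}$ from coherent $\Os_{Z_{i}}$-modules to coherent $\Os_{Z_{i}^{\an}}$-modules is an equivalence preserving cohomology.

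First I would handle essential surjectivity. Given a coherent $\Os_{Z^{\an}}$-module~$\Ns$, its restriction $\Ns_{i} := \Ns_{|Z_{i}^{\an}}$ is coherent, hence by Corollary~\ref{GAGAvoiscompact} of the form $\Gs_{i}^{\an}$ for a unique (up to unique isomorphism) coherent $\Os_{Z_{i}}$-module~$\Gs_{i}$. The isomorphisms $\Ns_{i+1}|_{Z_{i}^{\an}}\simeq \Ns_{i}$, via full faithfulness applied to the restriction morphisms $\Os(B_{i+1})\to\Os(B_{i})$, induce isomorphisms $\Gs_{i+1}\otimes_{\Os(B_{i+1})}\Os(B_{i})\simeq \Gs_{i}$ compatible with composition. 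Since $X$ is proper (and one reduces as usual to the projective case), a coherent sheaf on $Z = X_{\Os(B)}$ with $\Os(B) = \varprojlim\Os(B_{i})$ can be reconstructed from such a compatible system: this is where I would use the description of coherent sheaves on $\P^{r}_{\Os(B)}$ via finitely generated graded $\Os(B)[T_{0},\ldots,T_{r}]$-modules, taking the appropriate limit of the graded modules attached to the $\Gs_{i}$ and checking that the resulting module is finitely generated (it is, because its sections over each standard affine chart are a projective limit of finitely generated modules compatible under flat base change by $\Os(B_{i+1})\to\Os(B_{i})$).

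Full faithfulness then follows formally. For two coherent $\Os_{Z}$-modules $\Fs,\Gs$, the $\Os(B)$-module $\Hom_{\Os_{Z}}(\Fs,\Gs)$ equals $\varprojlim \Hom_{\Os_{Z_{i}}}(\Fs_{|Z_{i}},\Gs_{|Z_{i}})$, and likewise on the analytic side $\Hom_{\Os_{Z^{\an}}}(\Fs^{\an},\Gs^{\an}) = \varprojlim \Hom_{\Os_{Z_{i}^{\an}}}(\Fs^{\an}_{|Z_{i}^{\an}},\Gs^{\an}_{|Z_{i}^{\an}})$, because a morphism of coherent sheaves is determined by its restrictions to an open cover; each term of the limit is an isomorphism by Corollary~\ref{GAGAvoiscompact}. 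Finally, for the cohomological statement, the standard spectral-sequence or \v{C}ech computation on the open covers $\{Z_{i}\}$ and $\{Z_{i}^{\an}\}$ reduces it to the same statement on each affino\"{\i}de piece, again provided by Corollary~\ref{GAGAvoiscompact}; here I would also invoke that higher cohomology is compatible with the filtered colimit along open immersions on both sides.

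I expect the delicate point to be the algebraic gluing in the first paragraph, since the schemes $\Spec(\Os(B_{i}))$ are not open subschemes of $\Spec(\Os(B))$: the compatibility $\Gs_{i+1}\otimes_{\Os(B_{i+1})}\Os(B_{i})\simeq \Gs_{i}$ replaces an open-gluing datum, and one must verify that the graded module obtained by taking limits on projective charts really is finitely generated over $\Os(B)[T_{0},\ldots,T_{r}]$. Once this finiteness is established, uniqueness and comparison with~$\Ns$ follow from full faithfulness and the analytic gluing.
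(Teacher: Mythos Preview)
Your overall strategy---reduce to the affinoid pieces $B_i$ and pass to the limit---is exactly what the paper has in mind; indeed the paper says nothing more than that the result ``se d\'eduit ais\'ement du th\'eor\`eme~\ref{GAGAsuraffinoide}''. Two remarks, one minor and one substantive.

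First, the detour through Corollary~\ref{GAGAvoiscompact} is unnecessary: each $B_i$ is already affinoid, so $\Os(B_i)$ is a $k$-affinoid algebra and Theorem~\ref{GAGAsuraffinoide} applies directly to $Z_i = X_{\Os(B_i)}$.

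Second, and more seriously, your argument for part~{\it i)} has a gap. You propose to compute cohomology on both sides via ``the open covers $\{Z_i\}$ and $\{Z_i^{\an}\}$'' and invoke compatibility with ``filtered colimit along open immersions on both sides''. On the analytic side this is fine: the $Z_i^{\an}$ are genuinely an increasing open cover of $Z^{\an}$. But on the algebraic side the morphisms $Z_i \to Z$ are \emph{not} open immersions; they are base changes along $\Spec(\Os(B_i)) \to \Spec(\Os(B))$, and you yourself note in the last paragraph that the $\Spec(\Os(B_i))$ are not open in $\Spec(\Os(B))$. So there is no \v{C}ech or colimit spectral sequence available on the algebraic side in the form you suggest. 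The correct replacement is flat base change: once you check that the restriction maps $\Os(B)\to\Os(B_i)$ (and $\Os(B_j)\to\Os(B_i)$) are flat, properness of $X$ gives $H^q(Z_i,\Fs_{|Z_i}) \simeq H^q(Z,\Fs)\otimes_{\Os(B)}\Os(B_i)$, and since $H^q(Z,\Fs)$ is a finitely presented $\Os(B)$-module you recover it from the inverse system. Alternatively, compute both sides directly via the standard affine \v{C}ech cover of $\P^r$ (after the usual reduction to the projective case) and compare the resulting finite complexes of $\Os(B)$-modules term by term. Either route closes the gap; your essential-surjectivity sketch, by contrast, already isolates the right difficulty and is on track.
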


\section{La droite de Berkovich sur un anneau d'entiers de corps de nombres}\label{sectionberko}


Dans cette annexe, nous pr\'esentons succintement la droite de Berkovich sur un anneau d'entiers de corps de nombres. Nous invitons le lecteur dont ces pr\'emices auront \'eveill\'e la curiosit\'e \`a parcourir l'ouvrage~\cite{monjolimemoire} pour approfondir ce sujet.

\subsection{D\'efinitions}

Soit~$K$ un corps de nombres. Notons~$A$ l'anneau de ses entiers. Commen\c{c}ons par rappeler la d\'efinition d'espace affine analytique sur~$A$. Elle est due \`a V.~Berkovich (\emph{cf.}~\cite{rouge}, \S 1.5). Soit $n\in\N$. L'espace affine analytique de dimension~$n$ sur~$A$, not\'e~$\E{n}{A}$, est l'ensemble des semi-normes multiplicatives sur $A[T_{1},\ldots,T_{n}]$, c'est-\`a-dire l'ensemble des applications 
$$|.| : A[T_{1},\ldots,T_{n}] \to \R_{+}$$ 
qui v\'erifient les propri\'et\'es suivantes :
\begin{enumerate}[\it i)]
\item $|0|=0$ et $|1|=1$ ;
\item $\forall P,Q\in A[T_{1},\ldots,T_{n}],\, |P+Q|\le |P|+|Q|$ ;
\item $\forall P,Q\in A[T_{1},\ldots,T_{n}], \,|PQ|= |P|\, |Q|$.
\end{enumerate}

\begin{rem}
Dans la d\'efinition propos\'ee par V.~Berkovich figure une condition suppl\'ementaire qui fait intervenir une norme sur l'anneau~$A$. Pour $a\in A$, posons 
$$\|a\| = \max_{\sigma\in \Sigma_{\infty}}(|\sigma(a)|_{\infty}),$$ 
o\`u~$\Sigma_{\infty}$ d\'esigne l'ensemble des plongements complexes du corps~$K$ et~$|.|_{\infty}$ la valeur absolue usuelle sur~$\C$. La fonction $\l.\l : A\to \R_{+}$ d\'efinit une norme sur~$A$ et, lorsque l'on munit l'anneau~$A$ de cette norme, la d\'efinition de Berkovich co\"incide avec la n\^otre. Signalons que, quelle que soit la norme dont on munit~$A$ (sous r\'eserve tout de m\^eme qu'elle soit sous-multiplicative et fasse de~$A$ un espace complet), on obtient un espace contenu dans celui que nous avons d\'efini.
\end{rem}

Soit~$x$ un point de~$\E{n}{A}$. Il lui est associ\'e une semi-norme multiplicative~$|.|_{x}$ sur $A[T_{1},\ldots,T_{n}]$. L'ensemble~$\p_{x}$ des \'el\'ements sur lesquels elle s'annule est un id\'eal premier de $A[T_{1},\ldots,T_{n}]$. Le quotient est un anneau int\`egre sur lequel la semi-norme~$|.|_{x}$ induit une valeur absolue. Nous noterons~$\Hs(x)$ le compl\'et\'e du corps des fractions de cet anneau pour cette valeur absolue. Nous noterons simplement~$|.|$ la valeur absolue sur le corps~$\Hs(x)$, cela n'entra\^{\i}nant pas de confusion. La construction fournit un morphisme 
$$A[T_{1},\ldots,T_{n}] \to \Hs(x).$$
L'image d'un \'el\'ement~$P$ de $A[T_{1},\ldots,T_{n}]$ par ce morphisme sera not\'ee~$P(x)$. Avec ces notations, nous avons donc $|P(x)|=|P|_{x}$.

Munissons, \`a pr\'esent, l'espace analytique $\E{n}{A}$ d'une topologie : celle engendr\'ee par les ensembles de la forme
$$\{x \in \E{n}{A}\, |\, r < |P(x)| < s\},$$ 
pour $P \in A[T_{1},\ldots,T_{n}]$ et $r,s\in\R$.

Pour finir, nous d\'efinissons un faisceau d'anneaux~$\Os$ sur~$\E{n}{A}$ de la fa\c{c}on suivante : pour tout ouvert~$U$ de~$\E{n}{A}$, l'anneau~$\Os(U)$ est constitu\'e des applications
$$f : U\to \bigsqcup_{x\in U} \Hs(x)$$
qui v\'erifient les deux conditions suivantes :
\begin{enumerate}[\it i)]
\item $\forall x\in U$, $f(x)\in\Hs(x)$ ;
\item $f$ est localement limite uniforme de fractions rationnelles sans p\^oles.
\end{enumerate}

\subsection{Dimension~$0$}

Afin de rendre plus palpables les d\'efinitions pr\'ec\'edentes, nous allons d\'ecrire explicitement~$\E{0}{A}$, l'espace affine analytique de dimension~$0$ sur~$A$, que nous noterons plus volontiers~$\Mc(A)$. Nous noterons~$|.|_{\infty}$ la valeur absolue usuelle sur~$\C$ et, pour tout id\'eal maximal~$\m$ de~$A$, nous noterons~$|.|_{\m}$ la valeur absolue $\m$-adique normalis\'ee. Du th\'eor\`eme d'Ostrowski, l'on d\'eduit que les points de~$\Mc(A)$ sont exactement 
\begin{enumerate}[\it i)]
\item la valeur absolue triviale~$|.|_{0}$ (nous noterons~$a_{0}$ le point associ\'e) ;
\item la valeur absolue archim\'edienne $|\sigma(.)|_{\infty}^\eps$ (nous noterons~$a_{\sigma}^\eps$ le point associ\'e) pour tout toute classe de conjugaison de plongements complexes~$\sigma$ de~$K$ et tout \'el\'ement~$\eps$ de $\of{]}{0,1}{]}$  ;
\item la valeur absolue $\m$-adique $|.|_{\m}^\eps$ (nous noterons~$a_{\m}^\eps$ le point associ\'e) pour tout id\'eal maximal~$\m$ de~$A$ et tout \'el\'ement~$\eps$ de $\of{]}{0,+\infty}{[}$ ;
\item la semi-norme $|.|_{\m}^{+\infty}$ (nous noterons~$\tilde{a}_{\m}$ le point associ\'e) induite par la valeur absolue triviale sur le corps fini $A/\m$ pour tout id\'eal maximal~$\m$ de~$A$.
\end{enumerate}

\begin{figure}[htb]
\begin{center}
\input{corpsres.pstex_t}
\caption{L'espace $\Mc(\Z)$.}
\end{center}
\end{figure}

Nous pouvons \'egalement d\'ecrire la topologie de l'espace~$\Mc(A)$ (\emph{cf.} figure~\thefigure, trac\'ee dans le cas o\`u $K=\Q$ pour simplifier les notations, mais ais\'ement g\'en\'eralisable). Pour cela, il suffit d'indiquer que chacune des branches trac\'ee sur la figure~\thefigure\ est hom\'eomorphe \`a un segment r\'eel et qu'un voisinage du point central~$a_{0}$ est une partie qui contient enti\`erement toutes les branches \`a l'exception d'un nombre fini, et qui contient un voisinage de~$a_{0}$ dans chacune des branches restantes. Si l'on pr\'ef\`ere, l'espace~$\Mc(A)$ poss\`ede la topologie du compactifi\'e d'Alexandrov de la r\'eunion disjointe de ses branches priv\'ees de~$a_{0}$, le point~$a_{0}$ jouant le r\^ole du point \`a l'infini.


Nous pouvons \'egalement d\'ecrire explicitement les sections du faisceau structural sur les ouverts de~$\Mc(A)$. Nous avons repr\'esent\'e les diff\'erents cas \`a la figure~\addtocounter{figure}{1}\thefigure\addtocounter{figure}{-1}, de nouveau dans le cas o\`u $K=\Q$.

\begin{figure}[htb]
\begin{center}
\input{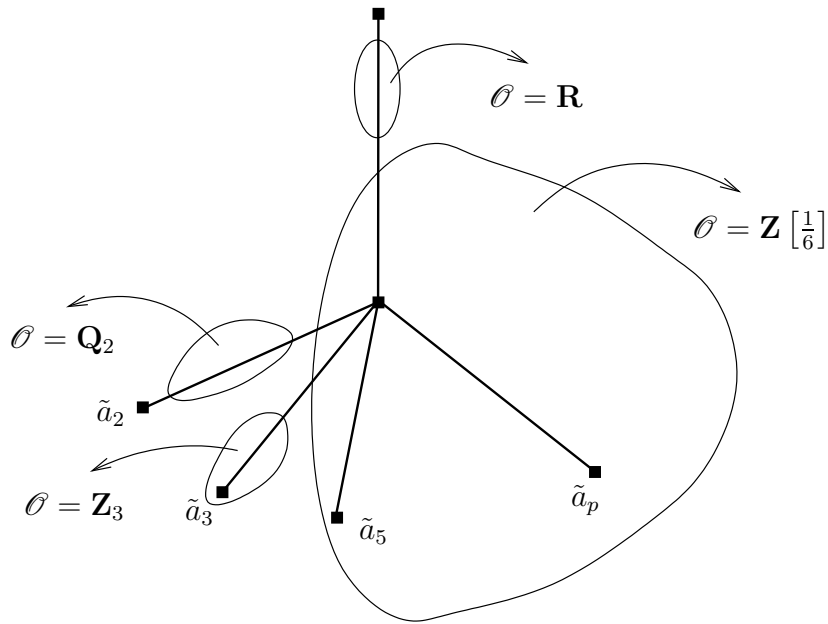}
\caption{Le faisceau structural sur $\Mc(\Z)$.}
\end{center}
\end{figure}

\subsection{Dimension~$1$}

Venons-en, \`a pr\'esent, \`a l'espace affine analytique de dimension~$1$ sur~$A$. Nous noterons~$T$ la coordonn\'ee sur cet espace. Remarquons, tout d'abord, que le morphisme $A \to A[T]$ induit un morphisme de projection
$$\pi : \E{1}{A} \to \Mc(A).$$
Cela permet d'obtenir une description topologique de la droite de Berkovich sur~$A$ : la fibre de~$\pi$ au-dessus d'un point~$x$ de~$\Mc(A)$ est isomorphe \`a la droite de Berkovich sur le corps $\Hs(x)$. Si $\Hs(x)=\C$, cette droite est isomorphe \`a l'espace~$\C$ et, si $\Hs(x)=\R$, elle est isomorphe \`a son quotient par la conjugaison complexe. Nous ne chercherons pas \`a obtenir de description plus pr\'ecise et nous contenterons d'indiquer quelques propri\'et\'es (\emph{cf.}~\cite{monjolimemoire}, th\'eor\`emes~4.4.1 et~4.5.5).


\begin{thm}\label{resume}
\begin{enumerate}[\it i)]
\item L'espace $\E{1}{A}$ est localement compact, m\'etrisable et de dimension topologique~$3$.
\item L'espace $\E{1}{A}$ est localement connexe par arcs.
\item Le morphisme de projection $\pi : \E{1}{A}\to \Mc(A)$ est ouvert.
\item En tout point~$x$ de~$\E{1}{A}$, l'anneau local~$\Os_{x}$ est hens\'elien, noeth\'erien, r\'egulier, de dimension inf\'erieure \`a $2$ et le corps r\'esiduel~$\kappa(x)$ est hens\'elien.
\item Le faisceau structural~$\Os$ est coh\'erent.
\end{enumerate}
\end{thm}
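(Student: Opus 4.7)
La strat\'egie g\'en\'erale consiste \`a exploiter la projection $\pi : \E{1}{A} \to \Mc(A)$ et la description tr\`es concr\`ete de la base donn\'ee ci-dessus, afin de ramener chaque \'enonc\'e soit \`a une v\'erification sur~$\Mc(A)$, soit \`a un r\'esultat connu sur la droite de Berkovich $\E{1}{\Hs(x)}$ au-dessus du corps valu\'e complet~$\Hs(x)$, qui est une fibre de~$\pi$.

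Pour les points topologiques~{\it i)}, {\it ii)} et~{\it iii)}, j'\'etablirais d'abord~{\it iii)} en v\'erifiant que, pour tout $P\in A[T]$ et tous $r<s$ r\'eels, l'ensemble des $x\in\Mc(A)$ dont la fibre~$\pi^{-1}(x)$ rencontre $\{r<|P|<s\}$ est ouvert, ce qui r\'esulte de la continuit\'e de la norme \'evalu\'ee sur les coefficients de~$P$. Pour~{\it i)}, les parties $\pi^{-1}(V)\cap \{|T(\cdot)|\le r\}$, o\`u~$V$ est un voisinage compact dans~$\Mc(A)$ et~$r>0$, fournissent un syst\`eme fondamental de voisinages compacts, la compacit\'e d\'ecoulant de Tychonoff apr\`es plongement dans~$\R_{+}^{A[T]}$ ; la m\'etrisabilit\'e s'en d\'eduit via l'existence d'une base d\'enombrable d'ouverts ; et la dimension topologique se calcule comme $\dim(\Mc(A))+\max_{x}\dim(\pi^{-1}(x))=1+2=3$, la dimension des fibres \'etant un r\'esultat classique de V.~Berkovich. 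Enfin, le point~{\it ii)} s'obtient en combinant l'arc-connexit\'e locale de~$\Mc(A)$ (claire sur sa description en \'etoile) avec celle des fibres (classique pour une droite de Berkovich sur un corps valu\'e complet), via l'ouverture de~$\pi$.

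Le point~{\it iv)} se traite par analyse cas par cas suivant la nature du point image $x=\pi(z)\in\Mc(A)$. Dans chaque situation --~branche archim\'edienne, branche ultram\'etrique ordinaire, point~$\tilde{a}_{\m}$, ou point central~$a_{0}$~-- je disposerais d'une description explicite de~$\Os_{z}$ : germes holomorphes classiques dans le cas archim\'edien, localisation d'un anneau de s\'eries convergentes en une ou deux variables dans les cas ultram\'etriques, anneau de type $\hat A_{\m}[\![T]\!]$ au-dessus de~$\tilde{a}_{\m}$, et un anneau hybride m\'elangeant les deux comportements au-dessus de~$a_{0}$. Sur chacune de ces descriptions, on lit la noeth\'erianit\'e, la r\'egularit\'e et la majoration de la dimension par~$2$ ; l'hens\'elianit\'e s'obtient en exhibant un syst\`eme fondamental de voisinages de~$z$ dont les alg\`ebres de sections forment, avec l'id\'eal maximal pertinent, une paire hens\'elienne, puis en passant \`a la limite inductive. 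Le point~{\it v)} r\'esulte alors du th\'eor\`eme de Kiehl appliqu\'e sur ces voisinages de type affino\"{\i}de.

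L'obstacle principal est concentr\'e dans~{\it iv)} au-dessus du point central~$a_{0}$ : il faut y manipuler simultan\'ement des ph\'enom\`enes archim\'ediens et ultram\'etriques dans une m\^eme alg\`ebre de s\'eries, et la construction d'une famille cofinale de voisinages globalement hens\'eliens n\'ecessite des arguments qui ne se lisent pas fibre par fibre.
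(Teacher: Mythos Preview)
The paper does not prove this theorem at all: it is stated as a summary of results established elsewhere, with an explicit reference to the author's monograph (\emph{cf.}~\cite{monjolimemoire}, th\'eor\`emes~4.4.1 et~4.5.5) given in the sentence immediately preceding the statement. There is therefore no ``paper's own proof'' to compare your proposal against; the theorem functions here purely as background input for the main constructions of the article.

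That said, your sketch is broadly reasonable as a plan but contains a few soft spots worth flagging. The dimension formula $\dim(\Mc(A))+\max_x\dim(\pi^{-1}(x))=1+2$ is heuristic: topological dimension is not additive along arbitrary continuous surjections, even open ones, so one needs either a local product structure or a direct computation. For~{\it v)}, invoking ``le th\'eor\`eme de Kiehl'' is problematic, since Kiehl's coherence theorem is formulated for affinoid algebras over a complete non-archimedean field, and the neighbourhoods you would use over the archimedean branches or over~$a_0$ are not of that type; the coherence of~$\Os$ on~$\E{1}{A}$ genuinely requires a separate argument mixing archimedean and non-archimedean inputs, which is one of the substantial contributions of the monograph cited. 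Your own caveat about the point~$a_0$ is well placed: that is exactly where the arguments in~\cite{monjolimemoire} become delicate and where a fibre-by-fibre reduction no longer suffices.
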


Signalons encore que la droite de Berkovich sur~$A$ satisfait au principe du prolongement analytique (\emph{ibid.}, th\'eor\`emes~4.4.2 et~7.1.9, corollaire~4.4.5).

\begin{thm}\label{prolan}
Soit~$U$ une partie connexe de~$\E{1}{A}$.
\begin{enumerate}[\it i)]
\item Le principe du prolongement analytique vaut sur~$U$. En particulier, l'anneau~$\Os(U)$ est int\`egre.
\item L'anneau des sections m\'eromorphes~$\Ms(U)$ est un corps.
\item Si~$U$ est de Stein, le morphisme naturel $\Frac(\Os(U))\to \Ms(U)$ est un isomorphisme.
\end{enumerate}
\end{thm}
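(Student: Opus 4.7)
Mon plan est de d\'emontrer les trois assertions successivement, chacune s'appuyant sur la pr\'ec\'edente. Pour~(i), je me ram\`enerais au fait que, pour toute section $f\in\Os(U)$, la partie $V_{f} = \{x\in U\mid f_{x} = 0 \text{ dans } \Os_{x}\}$ est \`a la fois ouverte et ferm\'ee dans~$U$; par connexit\'e de~$U$, elle sera alors vide ou \'egale \`a~$U$ tout entier. L'ouverture est imm\'ediate par d\'efinition des germes; la difficult\'e r\'eside dans la fermeture, qui revient \`a montrer l'ouverture du lieu $\{x\mid f_{x}\ne 0\}$. Je m'appuierais sur deux ingr\'edients du th\'eor\`eme~\ref{resume}: la connexit\'e locale par arcs, et l'int\'egrit\'e des anneaux locaux~$\Os_{x}$ (cons\'equence de leur r\'egularit\'e, en dimension inf\'erieure \`a~$2$). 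L'id\'ee serait d'examiner les germes de~$f$ le long d'un arc reliant un point~$x$ o\`u $f_{x}\ne 0$ \`a un point voisin; pr\`es des points au-dessus de la fibre ferm\'ee, les anneaux locaux se d\'ecrivent explicitement comme des compl\'et\'es du type~$\hat{A}_{\m}[\![T]\!]$, ce qui permet un contr\^ole fin du comportement des germes. Le point-cl\'e, vraisemblablement le plus d\'elicat, sera de montrer que~$f$ ne peut devenir un germe nul sans l'avoir \'et\'e sur tout un voisinage ouvert en amont de l'arc. L'int\'egrit\'e de~$\Os(U)$ s'ensuit imm\'ediatement: si $fg=0$ dans~$\Os(U)$ avec $f$ non nulle, il existe~$x$ tel que $f_{x}\ne 0$; par int\'egrit\'e de~$\Os_{x}$, on a $g_{x}=0$, donc $V_{g}$ est un ouvert non vide de~$U$, et le principe du prolongement donne $g\equiv 0$.

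Pour~(ii), j'utiliserais~(i) ainsi que la d\'efinition locale de~$\Ms(U)$ comme faisceau associ\'e au pr\'efaisceau $V\mapsto \Frac(\Os(V))$. Une section m\'eromorphe non nulle $f\in\Ms(U)$ s'\'ecrit localement $f=g/h$, avec $g,h\in\Os$ non nuls comme germes; l'int\'egrit\'e des anneaux locaux permet de d\'efinir le germe inverse $h/g$ l\`a o\`u~$g$ est non nul, et le principe du prolongement analytique l\`eve l'obstruction au recollement sur les intersections, fournissant ainsi un inverse global \`a~$f$.

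Pour~(iii), j'exploiterais les propri\'et\'es cohomologiques d'un espace de Stein, et plus pr\'ecis\'ement le th\'eor\`eme~A (engendrement des fibres des faisceaux coh\'erents par les sections globales). \'Etant donn\'ee $f\in\Ms(U)$, je consid\'ererais le faisceau d'id\'eaux $\Is\subset\Os$ dont la fibre en~$x$ est constitu\'ee des germes $g\in\Os_{x}$ tels que $g\, f_{x}\in\Os_{x}$: la coh\'erence de~$\Is$ d\'ecoule localement de l'\'ecriture $\Is = h_{0}\Os : g_{0}\Os$ d\`es que $f = g_{0}/h_{0}$ sur un voisinage, par coh\'erence de~$\Os$. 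Le th\'eor\`eme~A fournirait alors une section globale non nulle $g\in\Is(U)\subset\Os(U)$; les germes $g_{x}f_{x}\in\Os_{x}$ se recolleraient, par le principe du prolongement \'etabli en~(i), en un \'el\'ement $gf\in\Os(U)$, et l'\'egalit\'e $f = (gf)/g$ dans $\Frac(\Os(U))$ donnerait l'isomorphisme recherch\'e. L'obstacle principal que j'anticipe est la v\'erification soigneuse de la coh\'erence du faisceau~$\Is$, qui demande de bien g\'erer les diff\'erentes descriptions locales de~$f$ et leur compatibilit\'e sur les intersections.
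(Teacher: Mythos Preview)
Le texte ne d\'emontre pas ce th\'eor\`eme : il est \'enonc\'e dans l'annexe~\ref{sectionberko} comme un r\'esultat import\'e de~\cite{monjolimemoire} (th\'eor\`emes~4.4.2 et~7.1.9, corollaire~4.4.5), sans preuve reproduite ici. Il n'y a donc pas de \og preuve du papier \fg\ \`a laquelle comparer ta proposition.

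Cela dit, ta strat\'egie pour~(ii) et~(iii) est correcte et standard. Pour~(iii), l'id\'ee d'utiliser le th\'eor\`eme~A sur le faisceau d'id\'eaux~$\Is$ des d\'enominateurs locaux est exactement la bonne ; la coh\'erence de~$\Is$ s'obtient bien par transporteur entre faisceaux coh\'erents, comme tu le dis.

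Pour~(i), le squelette est juste (montrer que~$V_{f}$ est ouvert-ferm\'e), mais ta proposition laisse enti\`erement ouverte la partie substantielle. La fermeture de~$V_{f}$ --- autrement dit, le fait qu'un germe non nul en~$x$ reste non nul dans un voisinage --- est pr\'ecis\'ement le contenu du prolongement analytique, et l'invoquer \og le long d'un arc \fg\ n'est pas un argument. La connexit\'e locale par arcs et l'int\'egrit\'e des anneaux locaux ne suffisent pas \`a elles seules : rien n'emp\^eche a priori une section non nulle d'avoir un germe nul en un point limite si l'on ne contr\^ole pas comment les descriptions locales se raccordent entre fibres voisines de~$\pi$. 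La difficult\'e r\'eelle sur~$\E{1}{A}$ est de traiter le passage entre fibres au-dessus de points distincts de~$\Mc(A)$, et singuli\`erement le voisinage du point central~$a_{0}$ o\`u une infinit\'e de branches se rencontrent. La preuve de~\cite{monjolimemoire} repose sur une analyse d\'etaill\'ee et cas par cas de la structure des anneaux de sections (s\'eries convergentes avec conditions de convergence variables selon la branche), qui n'est pas r\'eductible \`a une inspection le long d'un arc. Ta mention des anneaux du type~$\hat{A}_{\m}[\![T]\!]$ va dans la bonne direction, mais ne couvre que les points~$z_{\m}$ ; il faudrait un traitement syst\'ematique de tous les types de points, y compris ceux des fibres archim\'ediennes et du point~$a_{0}$.
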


Rappelons ici ce que nous entendons par espace de Stein. Nous dirons qu'un espace localement annel\'e $(X,\Os_{X})$ est de Stein s'il satisfait les conclusions des th\'eor\`emes~A et~B de H.~Cartan : 
\begin{enumerate}[A)]
\item pour tout faisceau de $\Os_{X}$-modules coh\'erent~$\Fs$ et tout point~$x$ de~$X$,
la fibre~$\Fs_{x}$ est engendr\'ee par l'ensemble des sections globales~$\Fs(X)$ ;
\item pour tout faisceau de $\Os_{X}$-modules coh\'erent~$\Fs$ et tout entier $q\in\N^*$,
nous avons $H^q(X,\Fs)=0$.
\end{enumerate}

Donnons quelques exemples de sous-espaces de la droite analytique~$\AZ$ qui sont des espaces de Stein (\emph{ibid.}, th\'eor\`eme~6.6.29).

\begin{thm}\label{lemniscateStein}
Soient~$V$ une partie ouverte et connexe de l'espace~$\Mc(A)$ et \mbox{$s,t\in\R$}. Soit~$P(T)$ un polyn\^ome unitaire \`a coefficients dans~$\Os(V)$. Les parties suivantes de la droite analytique~$\E{1}{A}$ sont des espaces de Stein :
\begin{enumerate}[\it i)]
\item $\left\{x\in \pi^{-1}(V)\, \big|\, s < |P(T)(x)| < t\right\}$ ;
\item $\left\{x\in \pi^{-1}(V)\, \big|\, |P(T)(x)| > s\right\}$.
\end{enumerate}
\end{thm}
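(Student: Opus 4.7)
La strat\'egie consiste \`a v\'erifier directement les th\'eor\`emes~A et~B de Cartan en \'epuisant chaque domaine consid\'er\'e par une suite croissante de parties compactes, puis \`a passer \`a la limite. Je me concentre sur l'\'enonc\'e~{\it i)}, l'\'enonc\'e~{\it ii)} \'etant analogue et se traitant par un raisonnement \og \`a l'infini \fg\ similaire.

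Soit $X = \left\{x\in \pi^{-1}(V)\, \big|\, s < |P(T)(x)| < t\right\}$. Je commencerais par construire une suite croissante $(V_{n})_{n\ge 0}$ de parties compactes et connexes de~$V$, chacune poss\'edant un syst\`eme fondamental de voisinages affino\"{\i}des dans~$\Mc(A)$, v\'erifiant $\bigcup_{n\ge 0} V_{n}=V$. Je choisirais ensuite des suites $s_{n}\searrow s$ et $t_{n}\nearrow t$ avec $s<s_{n}<t_{n}<t$, et poserais
$$X_{n} = \left\{x\in\pi^{-1}(V_{n})\,\big|\, s_{n}\le |P(T)(x)|\le t_{n}\right\}.$$
Chaque $X_{n}$ est une partie compacte de~$X$, et leur r\'eunion vaut~$X$. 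Il suffirait alors d'\'etablir, pour tout faisceau coh\'erent~$\Fs$ sur un voisinage de~$X$, les deux propri\'et\'es suivantes~: (a) pour tout $q\ge 1$ et tout $n\ge 0$, $H^q(X_{n},\Fs)=0$~; (b) l'image du morphisme de restriction $\Fs(X_{n+1})\to \Fs(X_{n})$ est dense pour une topologie de Banach naturelle. Le th\'eor\`eme~B en d\'ecoule alors par nullit\'e du $R^1\!\lim$ (lemme de Mittag-Leffler pour les modules de Banach), et le th\'eor\`eme~A par un argument d'approximation \`a la Runge des sections locales par des sections globales, en combinaison avec le caract\`ere hens\'elien et noeth\'erien des anneaux locaux garanti par le th\'eor\`eme~\ref{resume}.

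Pour d\'emontrer (a) et (b), je proc\'ederais par d\'evissage \`a la Weierstrass~: quitte \`a raffiner~$V_{n}$ et \`a effectuer un changement de variable affine, on se ram\`enerait au cas o\`u~$P$ est un polyn\^ome distingu\'e en~$T$ au-dessus d'un voisinage affino\"{\i}de de~$V_{n}$, de sorte que~$X_{n}$ s'\'ecrive comme une couronne relative. On disposerait alors d'une d\'ecomposition de Laurent en~$T$ des sections de~$\Os$ au-dessus de~$\Os(V_{n})$, qui permet un calcul \`a la \v{C}ech similaire au cas classique sur un corps, et d'o\`u l'on d\'eduirait la propri\'et\'e~(a). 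La densit\'e~(b) s'obtiendrait en tronquant ces d\'eveloppements de Laurent et en approximant les coefficients, \'el\'ements de~$\Os(V_{n+1})$, par leurs restrictions \`a~$V_{n}$, ce qui est possible gr\^ace au choix de syst\`emes fondamentaux de voisinages affino\"{\i}des embo\^it\'es.

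La difficult\'e principale r\'eside dans la mise en \oe uvre de ce calcul dans le cadre arithm\'etique, o\`u l'espace~$\Mc(A)$ contient \`a la fois des points archim\'ediens, des points ultram\'etriques et le point g\'en\'erique trivialement valu\'e. Les conditions de convergence impos\'ees aux coefficients des d\'eveloppements de Laurent doivent \^etre contr\^ol\'ees uniform\'ement place par place, et il faut pour cela disposer d'un analogue relatif du th\'eor\`eme de Tate sur les \og affino\"{\i}des \fg\ au-dessus de~$\Mc(A)$. La coh\'erence du faisceau structural rappel\'ee au th\'eor\`eme~\ref{resume}~{\it v)} est un pr\'erequis indispensable pour que le raisonnement ci-dessus ait un sens sur les faisceaux coh\'erents arbitraires~; c'est \`a ce stade que se concentre l'essentiel du travail technique, et que la th\'eorie relative d\'evelopp\'ee dans~\cite{monjolimemoire} doit \^etre mobilis\'ee.
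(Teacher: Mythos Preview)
Le texte ne contient aucune d\'emonstration de ce th\'eor\`eme : il se contente de le citer comme un r\'esultat \'etabli ailleurs, plus pr\'ecis\'ement comme le th\'eor\`eme~6.6.29 de~\cite{monjolimemoire}. Il n'y a donc pas de \og preuve de l'article \fg\ \`a laquelle comparer votre esquisse.

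Cela dit, votre plan --- \'epuisement par des compacts de type affino\"{\i}de relatif, annulation de la cohomologie de \v{C}ech sur chacun via un d\'eveloppement de Laurent en~$P(T)$, puis passage \`a la limite par Mittag-Leffler --- est bien dans l'esprit de ce qui est fait dans~\cite{monjolimemoire}, et vous identifiez correctement que toute la difficult\'e r\'eside dans la mise en place d'une th\'eorie des domaines rationnels et d'un th\'eor\`eme d'acyclicit\'e \`a la Tate au-dessus de~$\Mc(A)$, uniforme en les places archim\'ediennes, ultram\'etriques et triviale. C'est pr\'ecis\'ement l'objet des chapitres de~\cite{monjolimemoire} qui pr\'ec\`edent le th\'eor\`eme~6.6.29 ; votre esquisse est donc une reconstruction raisonnable de l'architecture de cette preuve, m\^eme si elle reste un simple plan tant que ces r\'esultats pr\'eliminaires ne sont pas \'etablis. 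Un point qui m\'eriterait plus de soin : l'existence d'une \og topologie de Banach naturelle \fg\ sur~$\Fs(X_{n})$ et la densit\'e des restrictions ne vont pas de soi dans ce cadre mixte et constituent une partie non triviale du travail.
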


\bigskip

Pour terminer, disons quelques mots des sections globales sur les parties de la droite analytique~$\E{1}{A}$. Sur les disques, elles s'expriment essentiellement en termes de s\'eries dont les coefficients sont des fonctions sur~$\Mc(A)$. Consid\'erons, par exemple, le disque ouvert relatif de rayon~$1$ :
$$\D=\left\{\left. x\in\E{1}{A}\, \right|\, |T(x)|<1\right\}.$$
Le morphisme naturel $A[T]\to \Os(\D)$ induit un isomorphisme
$$A_{1^-}[\![T]\!] \xrightarrow[]{\sim} \Os(\D),$$
o\`u~$A_{1^-}[\![T]\!]$ d\'esigne l'anneau constitu\'e des s\'eries de la forme
$$\sum_{n\ge 0} a_{n}\, T^n \in A[\![T]\!]$$
telles que le rayon de convergence de la s\'erie \`a coefficients complexes
$$\sum_{n\ge 0} \sigma(a_{n})\, T^n$$
soit sup\'erieur ou \'egal \`a~$1$, pour tout plongement complexe~$\sigma$ de~$K$. On d\'eduit cette description du th\'eor\`eme~3.2.16 de \emph{ibid.} 

\`A partir de la description des anneaux de sections sur les disques, nous pouvons d\'eduire celle des anneaux locaux en certains points. Nous nous contenterons de deux exemples. Soit~$\m$ un id\'eal maximal de~$A$. Notons~$z_{\m}$ le point~$0$ de la fibre de~$\pi$ au-dessus du point~$\tilde{a}_{\m}$. D'apr\`es le corollaire~3.2.5 de \emph{ibid.}, le morphisme naturel $A[T] \to \Os_{z_{p}}$ induit un isomorphisme
$$\hat{A}_{\m}[\![T]\!] \xrightarrow[]{\sim} \Os_{z_{p}}.$$
Notons~$z_{0}$ le point~$0$ de la fibre de~$\pi$ au-dessus du point~$a_{0}$. D'apr\`es le corollaire~3.2.8 de \emph{ibid.}, le morphisme naturel $A[T] \to \Os_{z_{0}}$ induit un isomorphisme
$$E \xrightarrow[]{\sim} \Os_{z_{0}},$$
o\`u~$E$ d\'esigne l'anneau constitu\'e des \'el\'ements~$f$ de~$K[\![T]\!]$ qui v\'erifient les propri\'et\'es suivantes :
\begin{enumerate}[\it i)]
\item $\exists a\in A\setminus\{0\},\ f(aT)\in A[\![T]\!]$ ;
\item pour tout plongement complexe~$\sigma$ de~$K$, le rayon de convergence complexe de la s\'erie~$\sigma(f)$ est strictement positif ;
\item pour tout id\'eal maximal~$\m$ de~$A$, le rayon de convergence $\m$-adique de la s\'erie~$f$ est strictement positif (il suffit d'imposer cette condition pour les id\'eaux maximaux qui contiennent un \'el\'ement~$a$ poss\'edant les propri\'et\'es d\'ecrites en~{\it i}).
\end{enumerate}

\nocite{}
\bibliographystyle{plain}
\bibliography{biblio}

\begin{thebibliography}{10}

\bibitem{rouge}
Vladimir~G. Berkovich.
\newblock {\em Spectral theory and analytic geometry over non-{A}rchimedean
  fields}, volume~33 of {\em Mathematical Surveys and Monographs}.
\newblock American Mathematical Society, Providence, RI, 1990.

\bibitem{bleu}
Vladimir~G. Berkovich.
\newblock \'{E}tale cohomology for non-{A}rchimedean analytic spaces.
\newblock {\em Inst. Hautes \'Etudes Sci. Publ. Math.}, (78):5--161 (1994),
  1993.

\bibitem{vanishingcyclesanalytic}
Vladimir~G. Berkovich.
\newblock Vanishing cycles for non-{A}rchimedean analytic spaces.
\newblock {\em J. Amer. Math. Soc.}, 9(4):1187--1209, 1996.

\bibitem{SemCartan419}
Henri Cartan.
\newblock Faisceaux analytiques sur les vari{\'e}t{\'e}s de {S}tein :
  d{\'e}monstration des th{\'e}or{\`e}mes fondamentaux.
\newblock In {\em S{\'e}minaire Henri Cartan}, volume 4, expos{\'e} \no 19,
  pages 1--15, 1951--1952.

\bibitem{excellence}
Antoine Ducros.
\newblock Les espaces de {B}erkovich sont excellents.
\newblock {\em Ann. Inst. Fourier (Grenoble)}, 59(4):1443--1552, 2009.

\bibitem{Fried-Jarden}
Michael~D. Fried and Moshe Jarden.
\newblock {\em Field arithmetic}, volume~11 of {\em Ergebnisse der Mathematik
  und ihrer Grenzgebiete. 3. Folge. A Series of Modern Surveys in Mathematics
  [Results in Mathematics and Related Areas. 3rd Series. A Series of Modern
  Surveys in Mathematics]}.
\newblock Springer-Verlag, Berlin, third edition, 2008.
\newblock Revised by Jarden.

\bibitem{EGAII}
A.~Grothendieck.
\newblock \'{E}l\'ements de g\'eom\'etrie alg\'ebrique. {II}. \'{E}tude globale
  \'el\'ementaire de quelques classes de morphismes.
\newblock {\em Inst. Hautes \'Etudes Sci. Publ. Math.}, (8):222, 1961.

\bibitem{SGA1}
A.~Grothendieck.
\newblock {\em Rev\^etements \'etales et groupe fondamental}.
\newblock Springer-Verlag, Berlin, 1971.
\newblock S{\'e}minaire de G{\'e}om{\'e}trie Alg{\'e}brique du Bois Marie
  1960--1961 (SGA 1), Dirig{\'e} par Alexandre Grothendieck. Augment{\'e} de
  deux expos{\'e}s de M. Raynaud, Lecture Notes in Mathematics, Vol. 224.

\bibitem{Hakim}
Monique Hakim.
\newblock {\em Topos annel\'es et sch\'emas relatifs}.
\newblock Springer-Verlag, Berlin, 1972.
\newblock Ergebnisse der Mathematik und ihrer Grenzgebiete, Band 64.

\bibitem{algrings}
David Harbater.
\newblock Algebraic rings of arithmetic power series.
\newblock {\em J. Algebra}, 91(2):294--319, 1984.

\bibitem{Harbater}
David Harbater.
\newblock Convergent arithmetic power series.
\newblock {\em Amer. J. Math.}, 106(4):801--846, 1984.

\bibitem{mockcovers}
David Harbater.
\newblock Mock covers and {G}alois extensions.
\newblock {\em J. Algebra}, 91(2):281--293, 1984.

\bibitem{arithline}
David Harbater.
\newblock Galois coverings of the arithmetic line.
\newblock In {\em Number theory ({N}ew {Y}ork, 1984--1985)}, volume 1240 of
  {\em Lecture Notes in Math.}, pages 165--195. Springer, Berlin, 1987.

\bibitem{galoiscovers}
David Harbater.
\newblock Galois covers of an arithmetic surface.
\newblock {\em Amer. J. Math.}, 110(5):849--885, 1988.

\bibitem{patching}
David Harbater.
\newblock Patching and {G}alois theory.
\newblock In {\em Galois groups and fundamental groups}, volume~41 of {\em
  Math. Sci. Res. Inst. Publ.}, pages 313--424. Cambridge Univ. Press,
  Cambridge, 2003.

\bibitem{Kiehlpropre}
Reinhardt Kiehl.
\newblock Der {E}ndlichkeitssatz f\"ur eigentliche {A}bbildungen in der
  nichtarchimedischen {F}unktionentheorie.
\newblock {\em Invent. Math.}, 2:191--214, 1967.

\bibitem{Koepf}
Ursula K{\"o}pf.
\newblock \"{U}ber eigentliche {F}amilien algebraischer {V}ariet\"aten \"uber
  affinoiden {R}\"aumen.
\newblock {\em Schr. Math. Inst. Univ. M\"unster (2)}, (Heft 7):iv+72, 1974.

\bibitem{LiuGalois}
Qing Liu.
\newblock Tout groupe fini est un groupe de {G}alois sur {${\bf Q}\sb p(T)$},
  d'apr\`es {H}arbater.
\newblock In {\em Recent developments in the inverse Galois problem (Seattle,
  WA, 1993)}, volume 186 of {\em Contemp. Math.}, pages 261--265. Amer. Math.
  Soc., Providence, RI, 1995.

\bibitem{courbespointees}
Laurent Moret-Bailly.
\newblock Construction de rev\^etements de courbes point\'ees.
\newblock {\em J. Algebra}, 240(2):505--534, 2001.

\bibitem{monjolimemoire}
J\'er\^ome Poineau.
\newblock La droite de {B}erkovich sur~$\mathbf{Z}$.
\newblock Soumis. Disponible \`a l'adresse http://arxiv.org/abs/0809.2880.

\bibitem{largefields}
Florian Pop.
\newblock Embedding problems over large fields.
\newblock {\em Ann. of Math. (2)}, 144(1):1--34, 1996.

\bibitem{Saltman}
David~J. Saltman.
\newblock Generic {G}alois extensions and problems in field theory.
\newblock {\em Adv. in Math.}, 43(3):250--283, 1982.

\bibitem{GAGA}
Jean-Pierre Serre.
\newblock G\'eom\'etrie alg\'ebrique et g\'eom\'etrie analytique.
\newblock {\em Ann. Inst. Fourier, Grenoble}, 6:1--42, 1955--1956.

\end{thebibliography}


\end{document}